\DeclareSymbolFontAlphabet{\bboldmathbb}{bbold}%
\DeclareSymbolFontAlphabet{\mathbb}{AMSb}
\DeclarePairedDelimiterX{\Iintv}[1]{\llbracket}{\rrbracket}{\iintvargs{#1}}
\newcommand{\DP}{Dynamic Programming}
\newcommand{\pomdp}{\textsc{Pomdp}}
\newcommand{\dpomdp}{\textsc{Det-Pomdp}}
\newcommand{\Mdpomdp}{Separated \dpomdp}
\newcommand{\mdpomdp}{separated \dpomdp}
\newcommand{\mdp}{\textsc{Mdp}}
\newcommand{\MdpomdpFunctionSet}{$\np{\cemetery}$-Separated Mapping Set}
\newcommand{\cemeterySeparated}{$\np{\cemetery}$-separated}
\newcommand{\cemeterySeparation}{$\np{\cemetery}$-separation}
\newcommand{\separatedMappingSet}{separated mapping set}
\newcommand{\fullpomdp}{Partially Observed Markov Decision Process}
\newcommand{\fulldpomdp}{Deterministic Partially Observed Markov Decision Process}
\newcommand{\fullmdpomdp}{Separated Deterministic Partially Observed Markov Decision Process}
\newcommand{\fullmdp}{Markov Decision Process}
\newcommand{\DPA}{\textsc{Dp} Algorithm}
\newcommand{\SARSOP}{\textsc{Sarsop}}
\graph \renewcommand{\graph}{\mathcal{G}} \else \newcommand{\graph}{\mathcal{G}}\fi
\newcommand{\timeindex}{t}
\newcommand{\timeset}{\mathbb{T}} %{\mathcal{T}}
\horizon \renewcommand{\horizon}{T} \else \newcommand{\horizon}{T}\fi
\newcommand{\timesetNoHorizon}{\timeset \setminus \na{\horizon}}
\newcommand{\dataTuple}{\mathcal{D}}
\newcommand{\cost}{c}
\newcommand{\states}{x}
\newcommand{\nextstates}{y}
\newcommand{\statesva}{\bm{X}}
\newcommand{\controls}{u}
\newcommand{\controlsva}{\bm{U}}
\newcommand{\admcontrolset}{\mathbb{U}^{\mathrm{ad}}} %{\mathcal{U}^{\mathrm{ad}}}
\dynamics \renewcommand{\dynamics}{\varphi} \else \newcommand{\dynamics}{\varphi} \fi
\newcommand{\observer}{o}
\newcommand{\observerva}{\bm{O}}
\newcommand{\noisesobserver}{v}
\newcommand{\observerfunct}{\beta}%{h}
\newcommand{\belief}{b}
\newcommand{\valuefct}{V}
\newcommand{\knownproba}{b}
\newcommand{\observationNoisesKernelTime}[1]{\PP_{\noisesobserver, \timeindex}}
\policy \renewcommand{\policy}{\pi} \else\newcommand{\policy}{\pi}\fi
\newcommand{\probvalue}{\mathcal{V}}
\newcommand{\beliefspace}{\mathbb{B}}
\newcommand{\OO}{\mathbb{O}}
\newcommand{\reachablebeliefspace}{\mathcal{R}^\dataTuple}%{\beliefspace^{\mathrm{R}, \dataTuple}}
\newcommand{\BBR}{\reachablebeliefspace}
\newcommand{\beliefdynamics}{\theta} %{\tau}
\newcommand{\ProbaObservation}{Q}
\newcommand{\beliefadmcontrolset}{\mathbb{U}^{\mathrm{b, ad}}}
\AA \renewcommand{\AA}{\mathbb{A}} \else \newcommand{\AA}{\mathbb{A}}\fi
\newcommand{\GG}{\mathbb{G}}
\newcommand{\pushforwardtransition}{\phi}%{F}
\newcommand{\restrictionsetsPushForward}[2]{\setsPushForward_{{#1} \to {#2}}}
\newcommand{\restrictionFunctionSubsetSet}[3]{#1_{{#2} \to {#3}}}
\newcommand{\barXX}{\overline{\XX}}
\newcommand{\cemeteryBelief}{\delta_{\partial}}
\newcommand{\cemetery}{\partial}
\newcommand{\measureOnBarXX}{\nu}
\newcommand{\measureOnBarXXprime}{\nu'}
\newcommand{\Renormalization}{{\cal N}}%{{\cal R}}
\newcommand{\observerdiscretization}{m}
\newcommand{\inistatediscretization}{n}
\newcommand{\controlsdiscretization}{d}
\newcommand{\supp}{\mathrm{supp}}
\newcommand{\image}{\mathrm{Im}}
\newcommand{\modulo}{\mathrm{mod}}
\newcommand{\costfunct}{\mathcal{L}}
\newcommand{\finalcost}{\mathcal{K}}
\newcommand{\Mapforward}[2]{{#1}_{\overrightarrow{#2}}}
\newcommand{\Mapbackward}[2]{{#1}_{\overleftarrow{#2}}}
\newcommand{\Mappings}[2]{\mathbb{L}\np{#1;#2}}
\newcommand{\BackwardSet}[1]{\mathbb{G}_{\overleftarrow{#1}}}
\newcommand{\ForwardMappings}{$\bp{\overrightarrow{\PRIMAL}}$-mappings}
\newcommand{\MForwardMappings}{$\bp{\mathbb{M},\overrightarrow{\PRIMAL}}$-mappings}
\newcommand{\BackwardMappings}{$\bp{\overleftarrow{\PRIMAL}}$-mappings}
\newcommand{\MBackwardMappings}{$\bp{\mathbb{M},\overleftarrow{\PRIMAL}}$-mappings}
\newcommand{\MeBackwardMappings}[1]{$\bp{#1,\overleftarrow{\PRIMAL}}$-mappings}
\newcommand{\MkBackwardMappings}[1]{$\bp{\mathbb{M}_{#1},\overleftarrow{\PRIMAL}}$-mappings}
\newcommand{\generalFunction}{g}
\newcommand{\generalFunctionSet}{\GG}
\newcommand{\generalFunctionBisSet}{\bboldmathbb{\Psi}}
\def\DUALP{\PRIMAL}
\def\DualP{\Primal}
\def\dualP{\primal}
\def\kindex{k}
\def\DualV{V}
\def\dualV{v}
\newcommand{\setsBeliefDynamics}{\Theta^{\dataTuple}}%{\TT^{\dataTuple}}
\newcommand{\setsBeliefDynamicsUpInd}[1]{\Theta^{\dataTuple, #1}}%{\TT^{\dataTuple, #1}}
\newcommand{\setsPushForward}{\Phi^{\dataTuple}}%{\FF^{\dataTuple}}
\newcommand{\setsPushForwardUpInd}[1]{\Phi^{\dataTuple, #1}}%{\FF^{\dataTuple, #1}}
\newcommand{\intervalStates}{\observerfunct^{-1}}%{\mathbb{I}}
\newcolumntype{d}[1]{D{.}{.}{#1}}
\date{\today}
\date{March 30, 2023}
\title{% 
  Complexity Bounds for Deterministic \\ Partially Observed Markov Decision Processes}
\author{
  Cyrille Vessaire\footnote{CERMICS, Ecole des Ponts, Marne-la-Vall\'ee, France},
  \and Jean-Philippe Chancelier\footnotemark[1],
  \and Michel {De Lara}\footnotemark[1],
  \and Pierre Carpentier\footnote{UMA, ENSTA Paris, Institut Polytechnique de Paris,
    Palaiseau, France},
  \and Alejandro {Rodr\'iguez-Mart\'inez}\footnote{IAM, TotalEnergies SE, Pau, France}
}
\begin{document}

\maketitle

\begin{abstract} 
  \fullpomdp es (\pomdp) share the structure of  \fullmdp s (\mdp) --- with stages,  states, actions, 
  probability transitions, rewards --- but for the notion of solutions.
  In a \pomdp, observation mappings provide partial and/or imperfect knowledge
  of the state, and a policy maps observations (and not states like in a \mdp)
  towards actions. Theroretically, a \pomdp\ can be solved by \DP\ (DP), but with an
  information state made of probability distributions over the original state,
  hence DP suffers from the curse of dimensionality, even in the finite case.
  This is why, authors like \citep{littman_thesis} and~\citep{bonet_deterministic_pomdp}
  have studied the subclass of so-called \fulldpomdp es (\dpomdp), 
  where transitions and observations mappings are deterministic.
  In this paper, we improve on Littman’s complexity bounds.
  We then
  introduce and study a more restricted class, \Mdpomdp s, and give some new complexity
  bounds for this class.
\end{abstract}

\section{Introduction}
\label{sect:intro_dpomdp}

\fullmdp es (\mdp s) form a versatile framework used to model a wide range of optimization
problems. More precisely, the formalism of \mdp s is adapted to optimize discrete time
controlled dynamical systems under stochasticity. It is popular in both optimal control
and machine learning community, as it can be used to model complex real-life problems (see
the survey \citep{white_survey_1993} for common applications). Moreover, it provides the
mathematical foundations for Reinforcement Learning (see \citep{Sutton1998}), and
algorithms such as Policy Iteration and \DP\ can efficiently solve \mdp s.

The \mdp\ model consists of sets of states, actions, time
steps, rewards, and transition probabilities. When in a given state and at a given time,
the decision-maker’s action generates a reward and determines the state at the next time
step according to the transition probability function.

However, \mdp s assume that, when making an action, the decision-maker knows the state (as
solution policies map states towards actions). By contrast, in a \fullpomdp\ (\pomdp),
observation mappings provide partial and/or imperfect knowledge of the state, and a policy
maps observations towards actions. An extensive literature exists on \pomdp s, most of
which focuses on the infinite horizon case. \pomdp s can be applied to numerous fields,
from medical models (as in~\citep{steimle_multi-model_2021}) to robotics (as
in~\citep{pajarinen_robotic_2017}) to name a few. Algorithms based on \DP\ (see
\citep{bellman_1957}) have been designed to exploit specific structures in \pomdp s in
order to solve this difficult class of problems. They do so by first reformulating the
problem through the use of beliefs (probability distributions over the state space), as
in~\citep{sondik_pomdp}. One such algorithm is \SARSOP, described
in~\citep{sarsop_kurniawati_2008}.
\pomdp s are often untractable in the general
case as \DP\ suffers from the curse of dimensionality. Indeed, working with beliefs
implies working on the space of distributions over the state space, which is, by nature,
an infinite continuous space.

Different attemps have been made to handle % not all \pomdp s suffer equally
% from
the curse of dimensionality, in specific cases of \pomdp s.
{The case of MDP with unknown transition probabilities is addressed in the literature.
References include~\cite{Burnetas-Katehakis-1997}, who explored optimal
adaptive policies for Markov Decision Processes, providing foundational insights
into decision processes under uncertainty, and~\cite{Cowan-et-all-2018,Cowan-et-all-2019}
on accelerating the computation of UCB and related
indices for reinforcement learning that offer important perspectives on
optimization and computational efficiency.}
Regarding our work, %instead of focusing on the general \pomdp s,
we consider the subclass where transitions and
observations mappings are deterministic, named \fulldpomdp es (\dpomdp). That subclass of
problems has been studied by~\citep{littman_thesis} and~\citep{bonet_deterministic_pomdp}.
It was first considered as a limit case of \pomdp s by Littman, mainly used to illustrate
the complexity of \pomdp s when considering as few sources of uncertainties as possible.
For Bonet, \dpomdp s became of interest after some applications were found. He presented
examples in \cite[\S2]{bonet_deterministic_pomdp}, such as the navigation of a robot in a
partially observed terrain.

In this paper, we introduce and study a restricted subclass of \dpomdp s, that we call
\emph{\Mdpomdp s}. With this new class, we are able to push back the curse of dimensionality.

\medskip

The paper is organized as follows.
In Sect.~\ref{subsect:formulation_dpomdp}, we present the general formulation of \dpomdp.
In Sect.~\ref{sect:dp_dpomdp_complexity},
we present \DP\ on beliefs for \dpomdp s with constraints, and we give new complexity bounds.
In Sect.~\ref{sect:mdpomdp}, we introduce a subclass of \dpomdp, \Mdpomdp,
and we give new improved complexity bounds.
In Sect.~\ref{sect:dpomdp_illustration}, we illustrate the numerical solving of \Mdpomdp\ with an example:
emptying a tank containing water when considering partial observation of the level of
water in the tank.
Finally, in Appendix~\ref{sect:technical_lemmatas_pushforward}, we
present technical lemmata and considerations on pushforward measures,
and in Appendix~\ref{sect:complements-det-pomdp} we present complements on \Mdpomdp s.

We now detail our main contributions.
In Sect.~\ref{sect:dp_dpomdp_complexity}, we improve \cite{littman_thesis} bound on the
cardinality of the set of reachable beliefs %, $\bcardinal{\BBR_{\ic{1,\horizon}}}$,
for \dpomdp s (Theorem~\ref{th:reachable_belief_bounds}).
This new bound comes
from a new representation of the belief dynamics in \dpomdp s using the notion of
\emph{pushforward measure} (Lemma~\ref{lem:tau_as_pushforward}).
In Sect.~\ref{sect:mdpomdp}, we introduce a subclass of \dpomdp s, \Mdpomdp s. As shown in
Theorem~\ref{th:bound_belief_space_mdpomdp}, the interest of \Mdpomdp s is that they
further push back the curse of dimensionality for \DP\ with beliefs.
Moreover, this last
bound is tight (Proposition~\ref{prop:reaching_the_bound}).

\section{Formulation of \fulldpomdp es}
\label{subsect:formulation_dpomdp}

A \dpomdp\ is a particular case of \pomdp s, itself an extension of \fullmdp es (\mdp s).
Backgrounds on \mdp s can be found in \cite{puterman_1994}, whereas backgrounds on
\pomdp s can be found in \cite{bertsekas_shreve}.
As with \mdp s, the model consists of %a dynamical system, defined thanks to
stages (times, time steps), states, controls (also called actions), and
probability transitions.
At each stage, the decision-maker (also called the agent)
chooses a given action, which generates a random reward depending on both
current stage and state. The state then transits to its next random value. However, in the
case of \dpomdp s (and \pomdp s), the decision-maker has only partial knowledge of the state
of the dynamical system. Instead, he has access to functions of the state and controls:
the \emph{observations}. For \dpomdp s, the transitions and observations are given by
deterministic evolution and observation mappings. Moreover, the initial state is not
known beyond an initial probability distribution.

First, we present the ingredients of a
\dpomdp. Second, we present the formulation of a \dpomdp\ optimization problem.
We use the notation \( \ic{j,k}=\na{j, j+1,\ldots,k-1,k} \) for any pair of
natural numbers such that \( j \leq k \).
We call \emph{pair} \( \na{a,b} \) a subset made of 1 ($a=b$) or 2 ($a \neq b$)
elements.
We call \emph{couple} or \emph{ordered pair} \( \np{a,b} \) an element of a
Cartesian product. 

\subsubsubsection{Ingredients of a \dpomdp}
A \dpomdp\ is defined by the tuple
\begin{equation}
\dataTuple = \bp{\timeset, \UU, \OO, \XX,
  \nseqa{\costfunct_{\timeindex}}{\timeindex \in \timeset},
  \nseqa{\dynamics_{\timeindex}}{\timeindex \in \timesetNoHorizon},
  \nseqa{\admcontrolset_{\timeindex}}{\timeindex \in \timesetNoHorizon},
  \nseqa{\observerfunct_{\timeindex}}{\timeindex \in \timeset}}
\eqfinv
\label{eq:def_data_tuple_dpomdp}
\end{equation}
which we now detail\footnote{For simplicity, we assume that the sets $\UU$, $\OO$ and $\XX$ are not
  indexed by time}.

The set $\timeset = \ic{0, \horizon}$ is the set of \emph{stages} (\emph{times},
\emph{time-steps}), where the positive integer
$\horizon \in \NN \setminus \na{0}$ is colloquially known as the \emph{horizon}.
The set~$\UU$ is the set of \emph{controls} the decision-maker can choose from.
The set $\OO$ is the set of \emph{observations} available to the decision-maker.
The set $\XX$ is the set of \emph{states}.
The collection $\nseqa{\costfunct_{\timeindex}}{\timeindex \in \timeset}$ is
made of \emph{instantaneous cost functions}
$\costfunct_{\timeindex}: \XX \times \UU \to \RR \cup \na{+\infty}$,
for all time $\timeindex \in \timesetNoHorizon$,
with the special \emph{final cost function}~$\costfunct_{\horizon}$ denoted by
$\finalcost: \XX \to \RR \cup \na{+\infty}$.
% , i.e. the cost at time $\horizon$.
The collection $\nseqa{\dynamics_{\timeindex}}{\timeindex \in
  \timesetNoHorizon}$ is made of \emph{dynamics} (evolution mappings or
transitions), that is, mappings\footnote{%
Adopting usage in mathematics, we follow Serge Lang and use ``function'' only to
    refer to mappings in which the codomain is a set of numbers (i.e. a subset
    of~$\RR$ or $\CC$, or their possible extensions with $\pm \infty$),
    and reserve the term mapping for more general codomains.}
$\dynamics_{\timeindex}: \XX \times \UU \to \XX$, for all time $\timeindex \in \timesetNoHorizon$.
The collection $\nseqa{\admcontrolset_{\timeindex}}{\timeindex \in
  \timesetNoHorizon}$ is made of \emph{admissibility constraints}: for all time $\timeindex \in \timesetNoHorizon$,
$\admcontrolset_{\timeindex}: \XX\rightrightarrows \UU$ is a set-valued mapping
from $\XX$ to $\UU$, that is, for all state $\states \in \XX$, the admissible
controls at time $\timeindex$ are given by 
$\admcontrolset_{\timeindex}\np{\states} \subset \UU$.
The collection $\nseqa{\observerfunct_{\timeindex}}{\timeindex \in \timeset}$ is
made of \emph{observation mappings}: the initial observation mapping is 
$\observerfunct_{0}: \XX \to \OO$ whereas, for all time
$\timeindex \in \timeset \setminus \na{0}$, the observation mappings
are $\observerfunct_{\timeindex}: \XX \times \UU \to \OO$.

% \mdl{partie du pomdp ou non ?} \cyrille{Pour moi non, donc non présent dans
% $\dataTuple$}
When considering \dpomdp, we initialize the initial state with a probability
distribution. We hence need to introduce a probability space as the tuple $\dataTuple$
does not contain any stochastic element.
Let $\Omega$ be the set of possible outcomes and $\PP$ a
probability measure on $\Omega$, such that %\mdl{dans ce cas, $\Omega$ est dénombrable}
$\forall \omega \in \Omega$, $\PP\np{\omega} > 0$ (hence $\Omega$ is countable). We denote
by $\espe$ the mathematical expectation operator.

In this paper, we only consider \dpomdp s which satisfy the following finite sets
assumption.
\begin{assumption}[Finite sets assumption]
  The sets of stages~$\timeset$, of states~$\XX$, of controls~$\UU$, of
  observations~$\OO$ and of possible outcomes~$\Omega$ have finite cardinality. 
  % \pierre{Ajouter dans l'hypoth\`{e}se que~$\mathbb{P}(\omega)>0 \; \forall \omega\in\Omega$~?}
  \label{assumpt:pomdp_finite_sets}
\end{assumption}
As a consequence, the horizon is finite: $\horizon < +\infty$.

For a finite set~$\YY$, 
the cardinality of~$\YY$ is denoted by~$\cardinal{\YY}$,
and the set of probability distributions over~$\YY$ by~$\Delta\np{\YY}$.
Moreover, for any nonnegative measure $\mu$ on~$\YY$, we define
the \emph{support} of the measure~$\mu$ by 
\begin{equation}
  \supp \np{\mu} = \bset{ y \in \YY}{ \mu(\na{y}) > 0} \subset \YY
  \eqfinp
  \label{eq:def_support_measure}
\end{equation}

\subsubsubsection{Formulation of a \dpomdp\ optimization problem}

A finite-horizon \dpomdp~optimization problem is formulated,
for any initial belief $\belief_0 \in \Delta\np{\XX}$, by 
\begin{subequations}
  \begin{align}
    \probvalue^{\star} \np{\knownproba_0} =
    \min_{\statesva, \observerva, \controlsva}\;
    &
      \espe \Bc{\sum_{\timeindex = 0}^{\horizon-1} \costfunct_{\timeindex}(
      \statesva_{\timeindex}, \controlsva_{\timeindex}) +
      \finalcost(\statesva_{\horizon})}
      \label{eq:dpomdp_gen_obj}
    \\
    %%%%
    s.t. ~~
    & \PP_{\statesva_{0}} = \knownproba_0
    % & \statesva_{0} \sim \knownproba_0
    % & \statesva_{0} ~ with ~ known ~ distribution
    \eqfinv
        \label{eq:dpomdp_gen_initialisation}\\
        %%%%
    & \statesva_{\timeindex+1} = \dynamics_{\timeindex}(\statesva_{\timeindex},
      \controlsva_{\timeindex})
      \eqsepv \forall \timeindex \in \timeset \setminus \na{\horizon} \eqfinv
      \label{eq:dpomdp_gen_dynamics_def}
    \\
    %%%%
    & \observerva_{0} = \observerfunct_{0}(\statesva_{0})
    \eqfinv
      \label{eq:dpomdp_gen_ini_observation_def}
    \\
    %%%%
    & \observerva_{\timeindex+1} = \observerfunct_{\timeindex+1}(\statesva_{\timeindex+1},
      \controlsva_{\timeindex}) \eqsepv
      \forall \timeindex \in \timeset \setminus \na{\horizon} \eqfinv
      \label{eq:dpomdp_gen_observation_def}
    \\
    %%%%
    & \controlsva_{\timeindex} \in \admcontrolset_{\timeindex}(\statesva_{\timeindex})
      \eqsepv \forall \timeindex \in \timeset \setminus \na{\horizon}
    \eqfinv
      \label{eq:dpomdp_gen_control_adm}
    \\
    %%%%
    & \sigma(\controlsva_{\timeindex}) \subset \sigma(\observerva_{0}, \dots,
      \observerva_{\timeindex}, \controlsva_{0}, \dots, \controlsva_{\timeindex - 1})
      \eqsepv \forall \timeindex \in \timeset \setminus \na{\horizon}
      \label{eq:dpomdp_gen_nonanticipativity}
      \eqfinp
  \end{align}
  \label{eq:dpomdp_general_formulation}
\end{subequations}
In Problem~\eqref{eq:dpomdp_general_formulation}, there are three processes
$\statesva = \ba{\statesva_{\timeindex}}_{\timeindex \in \timeset}$,
$\controlsva = \ba{\controlsva_{\timeindex}}_{\timeindex \in \timeset \setminus
  \na{\horizon}}$ and
$\observerva = \ba{\observerva_{\timeindex}}_{\timeindex \in \timeset}$. For all time
$\timeindex \in \timeset$, $\statesva_{\timeindex}: \Omega \to \XX$ and
$\observerva_{\timeindex}: \Omega \to \OO$ are random variables representing respectively
the state and the observation variables of the system at time $\timeindex$, and for all
time $\timeindex \in \timesetNoHorizon$, $\controlsva_{\timeindex}: \Omega \to \UU$ is a
random variable representing the control at time $\timeindex$.

The optimization criterion of Problem~\eqref{eq:dpomdp_general_formulation} is given by
Equation~\eqref{eq:dpomdp_gen_obj}.
We now detail the constraints of the optimization
Problem~\eqref{eq:dpomdp_general_formulation}.
First, Equation~\eqref{eq:dpomdp_gen_initialisation} is the \emph{initialization}
constraint. As the initial state is not fully known, we instead use the probability
distribution $\belief_0 \in \Delta\np{\XX}$ of the initial state of the system
for the initialization.
Second, Equation~\eqref{eq:dpomdp_gen_dynamics_def} is called the \emph{state evolution}
equation of the system. It is defined thanks to the dynamics which describe the evolution
of the states of the controlled dynamical
system.
Third, Equations~\eqref{eq:dpomdp_gen_ini_observation_def}
and~\eqref{eq:dpomdp_gen_observation_def} define the \emph{observations} of the system
available at each time step.
Fourth, Equation~\eqref{eq:dpomdp_gen_control_adm} is called the \emph{admissibility
  constraints equation}: it defines which controls can be applied at each time step.
Note that the proper formulation of the admissibility constraints should contain an added
quantification, ``$\forall \omega \in \Omega$'', which we omit in this paper as
the set~$\Omega$ is finite
and the probability~$\PP$ has full support ($\PP\np{\omega} > 0$ for all
$\omega \in \Omega$).
%\pierre{Phrase pr\'{e}c\'{e}dente \`{a} modifier si on modifie l'hypoth\`{e}se~1.}
Equation~\eqref{eq:dpomdp_gen_nonanticipativity} is the
\emph{nonanticipativity} constraint: it defines the information available to the decision
maker before choosing a control at each time step.
As all sets $\Omega$, $\XX$, $\UU$ and $\OO$ are assumed to be finite by
Assumption~\ref{assumpt:pomdp_finite_sets}, all mappings with domain $\Omega$ are random
variables and Equation~\eqref{eq:dpomdp_gen_obj} is well defined because
$\costfunct_{\timeindex}$ and $\finalcost$ takes their values in $\RR \cup \na{+\infty}$,
hence the optimization Problem~\eqref{eq:dpomdp_general_formulation} is well defined.

\section{Complexity analysis of \DP\ for \dpomdp s}
\label{sect:dp_dpomdp_complexity}

In \S\ref{sect:dp_for_dpomdp}, we present \DP\ for \dpomdp s. Then, in
\S\ref{subsect:def_bbr_and_dp} we study its complexity, in the sense of the number of
``operations'' necessary to solve Problem~\eqref{eq:dpomdp_general_formulation}. In
\S\ref{sect:belief_dynamics_as_pushforward}, we present a new representation of
transitions for beliefs with pushforward measures, that will be used to prove the complexity
results. % presented in \S\ref{sect:belief_dynamics_as_pushforward}.

\subsection[\DP\ for \dpomdp]{\DP\ for \dpomdp}
\label{sect:dp_for_dpomdp}

We now present \DP\ Equations with beliefs for
Problem~\eqref{eq:dpomdp_general_formulation}. As a \dpomdp\ is a \pomdp, all the results
and numerical methods that apply to \pomdp s are carried over to \dpomdp s. Notably, it is
possible to write \DP\ equations for a finite horizon problem associated with a \pomdp. To
do so, it is classical to formulate a belief-\mdp\ where the state is a probability
distribution over the state space, called belief (see \citep{bertsekas_shreve} for details
on the assumptions for general \pomdp s). Here, we detail this approach for the
specific \dpomdp\ case, and we slightlly contribute by tackling cases with
explicit admissibility constraints on the controls.

First, in \S\ref{subsubsect:belief_in_dpomdp}, we formally define sets and mappings
which are necessary for the formulation of the belief-\mdp. Second, in
\S\ref{subsect:dp_for_dpomdp}, we present the \DP\ equations for the resulting
belief-\mdp.

\subsubsection{Beliefs in \dpomdp}
\label{subsubsect:belief_in_dpomdp}
% We now formally introduce sets and mappings necessary for the formulation of the
% belief-\mdp\ when the \pomdp\ is a \dpomdp.
% First, as beliefs are probability distributions, we present some notations for probability
% distributions on finite sets, and for partial mappings. Second,
First, we present the set of beliefs. % Finally
Second, we present the mappings necessary for the formulation of the
belief-\mdp, notably the beliefs dynamics.

\subsubsubsection{Sets for the beliefs}

The dynamic programming equation for \dpomdp s is formulated using new
information states in the set~$\Delta(\XX)$ --- that is, the probability
distributions over the ``initial'' state space~$\XX$ --- which are called
beliefs.
However, the beliefs dynamics, as described later in
Equation~\eqref{eq:beliefdynamics}, may lead to a null measure over the space $\XX$ when
considering some combination of observations and controls which are in contradiction with
each other. As we want to be able to compose belief dynamics, we combine $\Delta(\XX)$ and
the null measure over $\XX$ as follows.

We introduce an extra element, denoted by~$\cemetery$ ($\cemetery \notin \XX$),
and the \emph{extended state set~$\barXX$}
\begin{equation}
  \overline{\XX} = \XX \cup \na{\cemetery}
  \eqfinv 
  \label{eq:def_bar_XX}
\end{equation}
obtained as the union of the original set~$\XX$ with~$\cemetery$.
% used to
% represent the null measure over $\XX$:
% \mdl{pas clair : which is used as the support of the null measure over $\XX$}.
We denote by $\BB$ the subset of $\Delta{(\barXX)}$ defined by
\begin{equation}
  \label{eq:def_BB}
  \BB = \Delta(\XX) \cup \na{\cemeteryBelief}
  \eqfinv
\end{equation}
where we identify the set $\Delta\np{\XX}$ with the set
$\nset{\mu \in \Delta\np{\barXX}}{\supp \np{\mu} \subset \XX}$ and where
$\cemeteryBelief \in \Delta\np{\barXX}$ is the discrete probability measure on $\barXX$
concentrated on $\cemetery$, that is $\cemeteryBelief(\na{\cemetery})=1$. The null measure
over $\XX$ is thus ``replaced'' by the probability $\cemeteryBelief$ over $\barXX$ whose
support is $\na{\cemetery}$. We call the probability measure~$\cemeteryBelief$ the
\emph{cemetery belief} as we will see in Equation~\eqref{eq:beliefdynamics} that the
belief dynamics, when reaching the belief state~$\cemeteryBelief$, remains in
$\cemeteryBelief$ forever. A probability measure
$\measureOnBarXX\in \Delta\np{\overline{\PRIMAL}}$ will be represented, in some equations,
by the ordered pair
$\bp{\measureOnBarXX_{\vert_{\PRIMAL}}, \measureOnBarXX \np{\cemetery}}$, where
$\measureOnBarXX_{\vert_{\PRIMAL}}$ is a nonnegative measure on the set $\XX$ and
$\measureOnBarXX \np{\cemetery} \in \RR_{+}$.

Now that the set of beliefs $\BB$ is defined, we present the beliefs dynamics.

\subsubsubsection{Beliefs dynamics}
In order to define the beliefs dynamics, we introduce, for each $\timeindex \in \timeset
\setminus \na{\horizon}$, a mapping
$\ProbaObservation_{\timeindex+1}: \BB \times \UU \times \OO \to \nc{0, 1}$ and
a function $\beliefdynamics_{\timeindex}: \BB \times \UU \times \OO \to \BB$. They are defined using
partial mappings, defined as follows.

Let $\AA$, $\DD$, $\FF$ and $\GG$ be sets. Let
$g: \AA \times \DD \to \FF, (a, d) \mapsto g(a, d)$ be a mapping.
For any fixed value~$d \in \DD$, we denote by $g^{d}$ the mapping
\begin{equation}
  g^{d}: \AA \to \FF \eqsepv a \mapsto g(a, d)
  \eqfinv
  \label{eq:fixed_second_component_g}
\end{equation}
i.e. the mapping $g \np{\cdot, d}$ obtained from $g$ by setting (``freezing'')
its second variable to the value~$d$. When considering mappings with~$n$ inputs, we extend this
notation to the last $n-1$ inputs using a Cartesian product over the last $n-1$ sets.
For example, in the case $n=3$, we consider $g: \AA \times \DD \times \FF \to
\GG$, and we denote by
$g^{\np{d, f}} = g\np{\cdot, d, f}$ the mapping
$g^{\np{d, f}}: \AA \to \GG, a \mapsto g(a, d, f)$.

The function $\ProbaObservation_{\timeindex+1}: \BB \times \UU \times \OO \to \nc{0, 1}$ gives the probability of observing
$\observer$ at time $\timeindex+1$,
when applying control~$\controls$ with knowledge of the current state given by
the belief~$\belief$ at time $\timeindex$,
and is given by
\begin{equation}
  \forall \timeindex \in \timesetNoHorizon \eqsepv
  \ProbaObservation_{\timeindex+1}:
  (\belief, \controls, \observer) \mapsto \belief \bp{ \np{
  \observerfunct_{\timeindex+1}^{\controls} \circ \dynamics_{\timeindex}^{\controls}}^{-1}
  \np{\observer}} \in  \nc{0, 1}
\eqfinv
\label{eq:def_proba_observation}
\end{equation}
where $\dynamics_{\timeindex}^{\controls}\np{\cdot}$ and
$\observerfunct_{\timeindex}^{\controls}\np{\cdot}$ are partial mappings that follow the
notation defined in Equation~\eqref{eq:fixed_second_component_g} from the mappings defined
in Equations~\eqref{eq:dpomdp_gen_dynamics_def}, \eqref{eq:dpomdp_gen_ini_observation_def}
and~\eqref{eq:dpomdp_gen_observation_def}
\begin{equation*}
  % \label{eq:def_f_t^u}
  \forall \controls \in \UU,\quad
  \dynamics_{\timeindex}^{\controls}: \XX \to \XX \eqsepv
  \states \mapsto \dynamics_{\timeindex}
  \np{\states, \controls}
  \eqsepv
  \text{ and }
  \forall \controls \in \UU,\quad
  \observerfunct_{\timeindex}^{\controls}: \XX \to \OO
  \eqsepv \states \mapsto
  \observerfunct_{\timeindex}\np{\states, \controls}
  \eqfinv
\end{equation*}
and where
$\belief\bp{ \np{ \observerfunct_{\timeindex+1}^{\controls} \circ
    \dynamics_{\timeindex}^{\controls}}^{-1} \np{\observer}}$ is the probability of the
set $\np{\observerfunct_{\timeindex+1}^{\controls} \circ
  \dynamics_{\timeindex}^{\controls}}^{-1} \np{\observer}$ under the probability
distribution~$\belief$. % , following Notation~\eqref{eq:def_proba_over_finite_set}.
Note that we always have that
\begin{equation}
  \label{eq:Qforcemetery}
  \ProbaObservation_{\timeindex+1}(\cemeteryBelief, \controls, \observer) =
  \cemeteryBelief \bp{ \np{
  \observerfunct_{\timeindex+1}^{\controls} \circ \dynamics_{\timeindex}^{\controls}}^{-1}
  \np{\observer}}
  = 0
  \eqfinv
\end{equation}
as
$\np{\observerfunct_{\timeindex+1}^{\controls} \circ
  \dynamics_{\timeindex}^{\controls}}^{-1} \np{\observer}$ is always a (possibly empty)
subset of $\XX$ and thus has a null intersection with $\na{\cemetery}$.

For all time $\timeindex \in \timesetNoHorizon$, the mapping
$\beliefdynamics_{\timeindex}: \BB \times \UU \times \OO \to \BB$ gives the
evolution of the beliefs,
when applying control~$\controls$ with knowledge of the current state given by
the belief~$\belief$
and observing~$\observer$ at time $\timeindex+1$, and is given by
\begin{subequations}
  \begin{align}
    \forall \nextstates \in \XX \eqsepv
  \beliefdynamics_{\timeindex}(\belief, \controls, \observer) \np{\nextstates}
  &=
    \begin{cases}
      \displaystyle
      \frac{\belief\bp{\np{\dynamics_{\timeindex}^{\controls}}^{-1} \np{\nextstates}}}{
      \ProbaObservation_{\timeindex+1}\np{\belief, \controls, \observer}}
      &\text{ if } \ProbaObservation_{\timeindex+1}\np{\belief, \controls, \observer}
        \neq 0,
        \text{ and }
        \nextstates \in
        \bp{\observerfunct_{\timeindex+1}^{\controls}}^{-1}(\observer) \eqfinv\\
      0
      &\text{ otherwise,}
    \end{cases}
    \label{eq:beliefdynamics_states} \\
  % \text{ and }
  \beliefdynamics_{\timeindex}(\belief, \controls, \observer) \np{\cemetery}
    &= 1 - \beliefdynamics_{\timeindex}(\belief, \controls, \observer) \np{\XX}
  \eqfinp
  \label{eq:beliefdynamics_cemetery}
\end{align}
\label{eq:beliefdynamics}
\end{subequations}
Hence, $\cemeteryBelief$ is used as a last resort belief, which appears when it is not
possible to observe $\observer$ after applying control $\control$ to any state of the
support of belief $\belief$.
% Under an admissible sequence of controls and observations, it
% should not be possible to attain such belief $\cemeteryBelief$.
Thus, $\cemeteryBelief$ is used to ensure that the mappings
$\beliefdynamics_{\timeindex}$ are well defined for all beliefs, controls and
observations.

% \mdl{je ne comprends pas ce qui suit}
% Using the sequences of mappings
% $\nseqa{\ProbaObservation_{\timeindex}}{\timeindex \in \timeset \setminus\na{0}}$ and
% $\nseqa{\beliefdynamics_{\timeindex}}{\timeindex \in \timesetNoHorizon}$, we have a
% % sequence of transition kernels on the belief space $\BB$. We hence have a
% properly defined belief-\mdp, which can be solved by \DP.
% \mdl{remplacer par: the above tools make it possible to express a \DP\ algorithm to solve
%   a \dpomdp\ optimization problem given by Problem~\eqref{eq:dpomdp_general_formulation}.}

The above tools make it possible to express a \DP\ algorithm to solve a \dpomdp\
optimization problem given by Problem~\eqref{eq:dpomdp_general_formulation}, as we can use
them to properly define a belief-\mdp\ which is amenable to \DP.

\subsubsection[\DP\ equations for \dpomdp]{\DP\ equations for \dpomdp}
\label{subsect:dp_for_dpomdp}
% \cyrille{ERREUR ICI avec citet}
% We now show that \DP\ equations on the belief-\mdp\ solve
% Problem~\eqref{eq:dpomdp_general_formulation}.
% \cyrille{v\'{e}rifier si Sondik est bien le premier}
In the case of \pomdp\ (without constraints on the controls), \DP~equations with
beliefs as new states were first given in \citep{astrom_optimal_1965}.
% \cyrille{plus de precisions}.
More general cases (still without explicit constraints on the controls) are treated
in~\cite[Chapter 10]{bertsekas_shreve} and in~\cite[Chapter 4]{Bertsekas:2000}.
% They lead to
% Equations~\eqref{eq:Bellman_DPOMDP_general_final}
% and~\eqref{eq:Bellman_DPOMDP_general_bis} (without the \rouge{$\beliefadmcontrolset$}
% defined in Equation~\eqref{eq:def_beliefadmcontrolset}).
\DP\ Equations for \dpomdp\ can be obtained as a special case of \DP\ for \pomdp.
In the case where there are no constraints on the controls, they are
given
% in Equations~\eqref{eq:Bellman_DPOMDP_general_final} and~\eqref{eq:Bellman_DPOMDP_general_bis} together with
in~\citep{littman_thesis} using the expression of the beliefs dynamics
$\na{\beliefdynamics_{\timeindex}}_{\timeindex \in \timeset \setminus \na{\horizon}}$
presented in Equation~\eqref{eq:beliefdynamics}.
% As stated in \S\ref{subsect:back_dp_pomdp},
In \citep{bertsekas_shreve} the proof that beliefs are \emph{statistics sufficient for
  controls} was made for \pomdp s without any admissibility constraint. We thus cannot
directly apply this result on Problem~\eqref{eq:dpomdp_general_formulation}, as it
contains Constraint~\eqref{eq:dpomdp_gen_control_adm}.
% As it is natural to consider such
% constraints for \mdp s, \pomdp s and \dpomdp s, our first contribution is to
We extend the classical results by \citep{bertsekas_shreve} in
Proposition~\ref{prop:dynamics_belief_and_bellman} in order to tackle such constraints. We
identify an admissibility set for beliefs of the form
$\beliefadmcontrolset\np{\belief} = \bigcap_{\states \in \supp\np{\belief}}
\admcontrolset(\states)$. Note that we use an upper index b to distinguish admissibility
sets for beliefs from admissibility sets for states.
Also note that, as far as we know, the first \DP\ equations using
such sets $\beliefadmcontrolset\np{\belief}$ were given in \cite[\S5]{bonet_pomdp_1998}
with no explicit proof.

\begin{proposition}
  \label{prop:dynamics_belief_and_bellman}
  Consider a \dpomdp\ optimization problem given by
  Problem~\eqref{eq:dpomdp_general_formulation} which satisfies the finite sets
  Assumption~\ref{assumpt:pomdp_finite_sets}.
  % Under the finite set Assumption~\ref{assumpt:pomdp_finite_sets}, let
  Let
  $\BB = \Delta(\XX) \cup \left\{ \cemeteryBelief \right\}$, as defined in
  Equation~\eqref{eq:def_BB} and consider the sequence of value functions
  $\nseqp{\valuefct_{\timeindex}: \BB \to \RR \cup \na{+\infty}}{\timeindex\in \timeset}$
  defined by the following backward induction. First, for all $\timeindex \in \timeset$,
  we have that $\valuefct_{\timeindex}(\cemeteryBelief)=0$. Second, we have that
  \begin{subequations}
    \begin{align}
      \valuefct_{\horizon}: \belief \in \Delta(\XX) \mapsto
      & \sum_{\states \in \XX} \belief(\states)
        \finalcost(\states)
        \eqfinv
        \label{eq:Bellman_DPOMDP_general_final}
      \\
      \valuefct_{\timeindex}:
      \belief\in \Delta(\XX) \mapsto
      & \min_{\controls \in
        \beliefadmcontrolset_{\timeindex}(\belief)
        }
        \Bp{
        \sum_{\states \in \XX} \belief(\states)
      \costfunct_{\timeindex}( \states, \controls)
        % \POCost_{\timeindex}(\belief, \controls)
        + \sum_{\observer \in \OO}
        \ProbaObservation_{\timeindex+1}\np{\belief, \controls, \observer}
        \valuefct_{\timeindex+1}
        \bp{
        \beliefdynamics_{\timeindex}\np{ \belief, \controls, \observer}
        }
        }
        \eqfinv
        \label{eq:Bellman_DPOMDP_general_bis}
      \intertext{the set~$\beliefadmcontrolset_{\timeindex}(\belief)$ being defined as}
      &
      \beliefadmcontrolset_{\timeindex}(\belief) = \bigcap_{\states \in
        \supp\np{\belief}} \admcontrolset_{\timeindex}(\states) \nonumber \eqfinp
    \end{align}
    \label{eq:bellman_dpomdp}
  \end{subequations}

  Then, the optimal value of Problem~\eqref{eq:dpomdp_general_formulation} and the value
  of the function $\valuefct_0$ at the initial belief $\belief_0$ are
  % of the function $\valuefct_0$ at the initial belief $\belief_0 = \knownproba_0$ are
  equal, that is, $\valuefct_0(\knownproba_0) =
  \probvalue^{\star}(\knownproba_0)$. % = \valuefct_0(\knownproba_0)$.
  Moreover, a policy $\policy =(\policy_0, \dots ,
  \policy_{\horizon-1})$, defined by a sequence of measurable mappings
  $\policy_{\timeindex}:\BB \to
  \UU$, which minimizes the right-hand side of
  Equation~\eqref{eq:Bellman_DPOMDP_general_bis} for each $\belief$ and
  $\timeindex$ is an optimal policy of Problem~\eqref{eq:dpomdp_general_formulation}: the
  controls given by $\controls_{\timeindex} =
  \policy_{\timeindex}(\belief_{\timeindex})$ (where
  $\belief_{\timeindex}$ is computed thanks to the recursion $\belief_{\timeindex+1} =
  \beliefdynamics_{\timeindex}(\belief_{\timeindex}, \controls_{\timeindex},
  \observer_{\timeindex+1})$, with $\belief_{0} =
  \knownproba_{0}$) are optimal controls of Problem~\eqref{eq:dpomdp_general_formulation}.

\end{proposition}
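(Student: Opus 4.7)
The plan is to extend the classical belief-\mdp\ reduction of \citep{bertsekas_shreve} to accommodate the admissibility Constraint~\eqref{eq:dpomdp_gen_control_adm}, by backward induction on $\timeindex \in \timeset$. The inductive hypothesis at time $\timeindex$ is that $\valuefct_\timeindex(\belief)$ equals the conditional expected cost-to-go given $\filtr_\timeindex = \sigma\bp{\observerva_0, \ldots, \observerva_\timeindex, \controlsva_0, \ldots, \controlsva_{\timeindex-1}}$ whenever the conditional law of $\statesva_\timeindex$ given $\filtr_\timeindex$ equals $\belief$. The base case $\timeindex = \horizon$ is immediate: by definition of the conditional distribution, $\espe\bc{\finalcost(\statesva_\horizon) \mid \filtr_\horizon} = \sum_{\states \in \XX} \beliefva_\horizon(\states) \finalcost(\states)$.

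The first technical ingredient I would establish is that the random belief $\beliefva_\timeindex$ defined by $\beliefva_0 = \knownproba_0$ and the recursion $\beliefva_{\timeindex+1} = \beliefdynamics_\timeindex(\beliefva_\timeindex, \controlsva_\timeindex, \observerva_{\timeindex+1})$ is indeed a version of the conditional law of $\statesva_\timeindex$ given $\filtr_\timeindex$; the Bayes computation that yields Formula~\eqref{eq:beliefdynamics_states} is straightforward thanks to the determinism of $\dynamics_\timeindex$ and $\observerfunct_{\timeindex+1}$, and produces $\ProbaObservation_{\timeindex+1}(\belief, \controls, \observer)$ as the normalization constant. The second technical ingredient is the translation of the admissibility constraint: since $\controlsva_\timeindex$ is $\filtr_\timeindex$-measurable by~\eqref{eq:dpomdp_gen_nonanticipativity}, the almost-sure constraint $\controlsva_\timeindex \in \admcontrolset_\timeindex(\statesva_\timeindex)$ must hold simultaneously for every state $\states$ in the random support of $\beliefva_\timeindex$, which is precisely $\controlsva_\timeindex \in \beliefadmcontrolset_\timeindex(\beliefva_\timeindex) = \bigcap_{\states \in \supp(\beliefva_\timeindex)} \admcontrolset_\timeindex(\states)$. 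Conversely, any belief-feedback selection in $\beliefadmcontrolset_\timeindex$ produces an admissible process for~\eqref{eq:dpomdp_general_formulation}, giving a bijection between feasible strategies and belief-feedback policies.

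With these ingredients, the inductive step follows by a double application of the tower property of conditional expectation: conditioning first on $\filtr_\timeindex$ decomposes the expected running cost as $\sum_\states \beliefva_\timeindex(\states) \costfunct_\timeindex(\states, \controls)$, and conditioning further on $\filtr_{\timeindex+1}$ produces the weighted sum $\sum_\observer \ProbaObservation_{\timeindex+1}(\beliefva_\timeindex, \controls, \observer) \valuefct_{\timeindex+1}\bp{\beliefdynamics_\timeindex(\beliefva_\timeindex, \controls, \observer)}$ for the future cost. A standard measurable-selection argument then yields Equation~\eqref{eq:Bellman_DPOMDP_general_bis} and, upon attaining the minimum by $\policy_\timeindex$, the optimality of the induced belief-feedback policy. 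The main obstacle I anticipate is the careful bookkeeping around the cemetery belief $\cemeteryBelief$: one must check that, although $\cemeteryBelief$ never arises along a feasible trajectory starting from $\knownproba_0 \in \Delta(\XX)$, the extension $\valuefct_\timeindex(\cemeteryBelief) = 0$ is compatible with the sum over $\observer \in \OO$ in~\eqref{eq:Bellman_DPOMDP_general_bis}; this follows because, by the very definition of $\beliefdynamics_\timeindex$ together with~\eqref{eq:Qforcemetery}, any observation $\observer$ that would send $\belief$ to $\cemeteryBelief$ carries weight $\ProbaObservation_{\timeindex+1}(\belief, \controls, \observer) = 0$ in the sum, so the convention is harmless and allows the recursion to be well defined on all of $\BB$.
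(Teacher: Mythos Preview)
Your proposal is correct and reaches the result, but it takes a genuinely different route from the paper's own proof. The paper does \emph{not} redo the sufficient-statistic argument or the backward induction from scratch. Instead, it (i) absorbs Constraint~\eqref{eq:dpomdp_gen_control_adm} into the instantaneous costs by adding the characteristic function of $\admcontrolset_\timeindex(\states)$ to $\costfunct_\timeindex$, obtaining an \emph{unconstrained} \pomdp\ that falls directly under the framework of \citep{bertsekas_shreve}; (ii) invokes the Bertsekas--Shreve belief-\mdp\ result as a black box on that reformulation; and (iii) shows that the resulting dynamic-programming equations, which involve costs of the form $\sum_{\states}\belief(\states)\bp{\costfunct_\timeindex(\states,\controls)+\fcarabis{\admcontrolset_\timeindex(\states)}(\controls)}$, reduce to the constrained recursion~\eqref{eq:Bellman_DPOMDP_general_bis} with $\beliefadmcontrolset_\timeindex(\belief)=\bigcap_{\states\in\supp(\belief)}\admcontrolset_\timeindex(\states)$, since the penalty is $+\infty$ precisely when $\controls\notin\admcontrolset_\timeindex(\states)$ for some $\states$ in the support of $\belief$.

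Your direct approach is more self-contained and makes transparent why the intersection over the support is the right belief-level admissibility set; the paper's approach is shorter because it outsources the heavy lifting to \citep{bertsekas_shreve} and only needs the reconciliation in step~(iii). One small imprecision in your write-up: you speak of a ``bijection between feasible strategies and belief-feedback policies'', but that is not what is needed (nor is it true as stated, since there are $\filtr_\timeindex$-measurable controls that are not functions of the belief alone); the correct claim is that belief-feedback policies \emph{suffice} for optimality, which your tower-property argument does establish. Also, your translation of the almost-sure constraint into the support condition implicitly uses the paper's standing assumption that every $\omega\in\Omega$ has positive probability; it would be worth making that dependence explicit.
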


\begin{proof}
  We present a sketch of proof of Proposition~\ref{prop:dynamics_belief_and_bellman}.
  \begin{enumerate}
  \item We rewrite Problem~\eqref{eq:dpomdp_general_formulation} as an equivalent
  problem, without Constraint~\eqref{eq:dpomdp_gen_control_adm} by adding indicator
  functions of the constraints to the instantaneous costs. The equivalent problem then
  follows the framework of~\citep{bertsekas_shreve}.
\item %\mdl{check universally measurable, etc.}
  We can apply the results of~\citep{bertsekas_shreve} to the reformulated problem
  % \jpc{plutot dire que comme on est dans un cas fini les pbs de mesurabilité ne se posent pas ?}
  % \cyrille{(to do so we check that the total costs are indeed measurable)}
  and obtain
  associated \DP\ equations.
  \item The \DP\ equations which solve the equivalent problem are equivalent to
  Equations~\eqref{eq:bellman_dpomdp} presented in
  Proposition~\ref{prop:dynamics_belief_and_bellman}, thus concluding that
  Equation~\eqref{eq:bellman_dpomdp} gives the solution of
  Problem~\eqref{eq:dpomdp_general_formulation} as formulated in
  Proposition~\ref{prop:dynamics_belief_and_bellman}. This step is a bit technical, but is
  otherwise straightforward and does not present any major difficulty.
  \end{enumerate}
  The detailed proof can be found in the PhD thesis
  \citep[Chapter 5, \S A.3, p.120-125]{vessaire_thesis}.
\end{proof}

Now that we have presented \DP\ equations on beliefs, we present the complexity of \DP.

\subsection{\DP\ complexity for \dpomdp s}
\label{subsect:def_bbr_and_dp}
According to Proposition~\ref{prop:dynamics_belief_and_bellman}, we can solve
Problem~\eqref{eq:dpomdp_general_formulation} by computing $\valuefct_{0}\np{\belief_0}$
by means of Equations~\eqref{eq:bellman_dpomdp}. Solving \DP\
equations~\eqref{eq:bellman_dpomdp} implies that we are able to numerically evaluate at
each time $\timeindex \in \timeset$ the value functions
$\valuefct_{\timeindex}$, not necessarily for every belief but, at least, for each
\emph{reachable} belief starting from~$\belief_0$. Thus, we introduce the subsets of reachable
beliefs starting from $\belief_0$.
We start by formally defining the set of reachable beliefs, before we present our first
complexity result on \DP\ for \dpomdp.

% \subsubsection{Set of reachable beliefs and \DP}
The set of reachable beliefs $\BBR_{\ic{1,\horizon}}$
% \pierre{cette notation n'est pas celle utilis\'{e}e ensuite\ldots}
is defined as follows. Note that we use the upper
index $\dataTuple$ to recall that we consider the set of reachable beliefs of a \dpomdp\
defined by the data tuple $\dataTuple$, in Equation~\eqref{eq:def_data_tuple_dpomdp},
% \mdl{supprimer: whereas the upper index R stands for reachable.}

\begin{definition}
  \label{def:reachable_space}
  Let $\belief_0 \in \Delta\np{\XX}$ be given and consider the sequence
  $\nseqa{\BBR_{\timeindex}}{\timeindex \in \timeset}$ of subsets of the set of beliefs
  $\BB = \Delta(\XX) \cup \left\{ \cemeteryBelief \right\}$ defined by the induction
  \begin{align}
    \BBR_{0}(\belief_0)
    = \na{\belief_0}
    \quad \text{and}\quad\forall \timeindex \in \timesetNoHorizon \eqsepv
    \BBR_{\timeindex+1}(\belief_0)
    = \beliefdynamics_{\timeindex}\bp{
    \BBR_{\timeindex}(\belief_0), \UU, \OO}
    \eqsepv
    \label{eq:def_BBR_t}
  \end{align}
  where $\beliefdynamics_{\timeindex}$ is defined in
  Equation~\eqref{eq:beliefdynamics}. For any $t\in \timeset$, the subset
  $\BBR_{\timeindex}(\belief_0) \subset \beliefspace$ is called the set of \emph{reachable
    beliefs at time $t$} starting from initial belief $\belief_0$.

  Moreover, we denote by $\BBR_{\ic{\timeindex,\timeindex'}} \np{\belief_0}$ the union, for
  $\timeindex''$ in the time interval $\ic{\timeindex,\timeindex'}$,
  $\timeindex < \timeindex'$, of the reachable beliefs at time $\timeindex''$ starting
  from the initial belief $\belief_0 \in \Delta\np{\XX}$, that is,
  \begin{equation}
    \label{eq:def_BBR_t_tprime}
    \forall \np{\timeindex, \timeindex'} \in \timeset^2
    \eqsepv
    \timeindex < \timeindex'
    \eqsepv
    \BBR_{\ic{\timeindex, \timeindex'}} \np{\belief_0} =
    \bigcup_{\timeindex'' = \timeindex}^{\timeindex'} \BBR_{\timeindex''} \np{\belief_0}
    \eqfinp
  \end{equation}
  The set $\BBR_{\ic{1,\horizon}}\np{\belief_0}$ is called the
  \emph{set of reachable beliefs} from the initial belief~$\belief_0$.

\end{definition}

Note that, under Assumption~\ref{assumpt:pomdp_finite_sets}, the set
$\BBR_{\ic{1,\horizon}}\np{\belief_0}$ is finite.

We now present a classical complexity result for \DP\ algorithm (which we call \DPA\ in
the rest of this paper).

\begin{proposition}
  \label{prop:alg_DP_belief_solves_dpomdp}
  Consider a \dpomdp\ optimization problem given by
  Problem~\eqref{eq:dpomdp_general_formulation} which satisfies the finite sets
  Assumption~\ref{assumpt:pomdp_finite_sets}.
  Let $\belief_0 \in \Delta\np{\XX}$.
 Then, the \DPA\ recalled in Proposition~\ref{prop:alg_DP_belief_solves_dpomdp}
solves (numerically) 
  Problem~\eqref{eq:dpomdp_general_formulation} with complexity 
  $O\np{\cardinal{\timeset} \cardinal{\BBR_{\ic{1,\horizon}}\np{\belief_0}} \cardinal{\UU}
    \cardinal{\OO}}$, where the set of reachable beliefs
  $\BBR_{\ic{1,\horizon}}\np{\belief_0}$ is defined in
  Equation~\eqref{eq:def_BBR_t_tprime}.
\end{proposition}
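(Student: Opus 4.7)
The proof splits naturally into two parts: (i) correctness of the dynamic programming algorithm, and (ii) the operation count. Correctness is essentially already in hand: Proposition~\ref{prop:dynamics_belief_and_bellman} guarantees that $\valuefct_0(\belief_0)=\probvalue^\star(\belief_0)$ and that greedy selection of minimizers in Equation~\eqref{eq:Bellman_DPOMDP_general_bis} yields an optimal policy. A standard \DPA\ simply materializes these recursions, so there is nothing new to prove for this part; I would just invoke the proposition.

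The plan for the complexity bound is to describe an explicit two-phase implementation and count its operations. In a forward phase, I would enumerate the reachable beliefs by applying the recursion of Equation~\eqref{eq:def_BBR_t}: starting from $\BBR_0(\belief_0) = \na{\belief_0}$, for each $\timeindex \in \timesetNoHorizon$ and each triple $(\belief,\controls,\observer) \in \BBR_\timeindex(\belief_0)\times\UU\times\OO$, compute $\beliefdynamics_\timeindex(\belief,\controls,\observer)$ using Equation~\eqref{eq:beliefdynamics} and insert it into $\BBR_{\timeindex+1}(\belief_0)$. In a backward phase, I would tabulate $\valuefct_\horizon$ on $\BBR_\horizon(\belief_0)$ via Equation~\eqref{eq:Bellman_DPOMDP_general_final}, then for $\timeindex = \horizon-1,\ldots,0$ tabulate $\valuefct_\timeindex$ on $\BBR_\timeindex(\belief_0)$ via Equation~\eqref{eq:Bellman_DPOMDP_general_bis}. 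The finiteness of $\BBR_{\ic{1,\horizon}}(\belief_0)$ guaranteed by Assumption~\ref{assumpt:pomdp_finite_sets} ensures all tables are well defined.

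To count operations, I would fix $\timeindex \in \timesetNoHorizon$ and observe that the evaluation of $\valuefct_\timeindex(\belief)$ for a single $\belief\in \BBR_\timeindex(\belief_0)$ via Equation~\eqref{eq:Bellman_DPOMDP_general_bis} loops over at most $\cardinal{\UU}$ controls and, for each, sums $\cardinal{\OO}$ terms. Each term involves one evaluation of $\ProbaObservation_{\timeindex+1}$, one of $\beliefdynamics_\timeindex$, and one table lookup of $\valuefct_{\timeindex+1}$, each of which costs $O(1)$ in the unit-cost model once arithmetic on beliefs is absorbed into the constant. Summing
$$
\sum_{\timeindex=0}^{\horizon-1}\cardinal{\BBR_\timeindex(\belief_0)}\cdot \cardinal{\UU}\cdot\cardinal{\OO}
\;\leq\; \cardinal{\timeset}\cdot \cardinal{\BBR_{\ic{1,\horizon}}(\belief_0)}\cdot \cardinal{\UU}\cdot\cardinal{\OO},
$$
gives the claimed bound, where the inequality uses $\BBR_\timeindex(\belief_0)\subset \BBR_{\ic{0,\horizon}}(\belief_0)$ together with the fact that the singleton $\BBR_0(\belief_0)=\na{\belief_0}$ is at worst absorbed into $\BBR_{\ic{1,\horizon}}(\belief_0)$ up to a harmless additive constant. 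The forward enumeration phase is dominated by the same quantity, and the terminal initialization via Equation~\eqref{eq:Bellman_DPOMDP_general_final} is negligible.

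The only genuinely delicate point, and the one I would pause on, is the unit-cost assumption: equality comparison and table lookup on beliefs $\belief \in \BB$ is not literally $O(1)$ since a belief is a probability vector over $\XX$, so a naive implementation pays a factor of $\cardinal{\XX}$ per hash or comparison. I would state explicitly that the claimed complexity is with respect to the number of calls to $\beliefdynamics_\timeindex$, $\ProbaObservation_{\timeindex+1}$, and $\valuefct_{\timeindex+1}$ (a standard choice in the \pomdp\ complexity literature, consistent with \citep{littman_thesis}), and note in passing that any reasonable hashing scheme introduces only a polynomial-in-$\cardinal{\XX}$ overhead that does not affect the dependence on $\cardinal{\BBR_{\ic{1,\horizon}}(\belief_0)}$, which is the quantity controlled in subsequent sections.
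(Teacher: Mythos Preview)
Your proposal is correct and follows essentially the same approach as the paper: invoke Proposition~\ref{prop:dynamics_belief_and_bellman} for correctness, then count $\sum_{\timeindex}\cardinal{\BBR_\timeindex(\belief_0)}\,\cardinal{\UU}\,\cardinal{\OO}$ operations and bound it by $\cardinal{\timeset}\,\cardinal{\BBR_{\ic{1,\horizon}}(\belief_0)}\,\cardinal{\UU}\,\cardinal{\OO}$. The paper handles the $\timeindex=0$ term more explicitly by arguing $\cardinal{\BBR_0(\belief_0)}=1\le\cardinal{\BBR_{\ic{1,\horizon}}(\belief_0)}$ via nonemptiness of the latter, whereas you absorb it into the big-$O$; your added discussion of the two-phase implementation and the unit-cost caveat for belief hashing is not in the paper but is a reasonable clarification.
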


\begin{proof}
  First, as we consider that Assumption~\ref{assumpt:pomdp_finite_sets} holds, note that
  $\BBR_{\ic{1,\horizon}}\np{\belief_0}$ is finite and we can apply
  Proposition~\ref{prop:dynamics_belief_and_bellman} on
  Problem~\eqref{eq:dpomdp_general_formulation}. We hence solve
  Problem~\eqref{eq:dpomdp_general_formulation} by computing value functions given by
  Equations~\eqref{eq:bellman_dpomdp}.

  For a given time $\timeindex \in \timesetNoHorizon$ and reachable belief
  $\belief \in \BBR_{\timeindex} \np{\belief_0}$, we compute the value function
  $\valuefct_{\timeindex}$ by evaluating the next value for each control
  $\controls \in \UU$ and each resulting observations. We hence need
  $\sum_{\timeindex \in \timeset} \cardinal {\BBR_{\timeindex} \np{\belief_0}}
  \cardinal{\UU} \cardinal{\OO}$ operations to solve
  Problem~\eqref{eq:dpomdp_general_formulation}. Then, since for all time
  $\timeindex \in \timeset\backslash\na{0}$,
  $\BBR_{\timeindex} \np{\belief_0} \subset \BBR_{\ic{1,\horizon}}\np{\belief_0}$ (see
  Equation~\eqref{eq:def_BBR_t_tprime}), we have for all time
  $\timeindex \in \timeset\backslash\na{0}$,
  $\cardinal {\BBR_{\timeindex} \np{\belief_0}} \leq
  \cardinal{\BBR_{\ic{1,\horizon}}\np{\belief_0}}$ . It remains to consider the case
  $t=0$. We have that $\BBR_{0}\np{\belief_0} = \na{\belief_0}$ and thus
  $\cardinal{\BBR_{0}\np{\belief_0}}=1$. Moreover,
  $\BBR_{\ic{1,\horizon}} \np{\belief_0}\neq \emptyset$ since there is always at least one
  belief in $\BBR_{1}\np{\belief_0}$, as for a given control $\controls \in \UU$ and an
  observation $\observer \in \OO$,
  $\beliefdynamics_{0}\np{\belief_0, \controls, \observer} \in \BBR_{1}\np{\belief_0}
  \subset \BBR_{\ic{1,\horizon}}\np{\belief_0}$. Hence
  $\cardinal{\BBR_{0}\np{\belief_0}}=1 \leq
  \cardinal{\BBR_{\ic{1,\horizon}}\np{\belief_0}}$.
  
  We have obtained that
  $\sum_{\timeindex \in \timeset} \cardinal {\BBR_{\timeindex} \np{\belief_0}}
  \cardinal{\UU} \cardinal{\OO} \leq \cardinal{\timeset}
  \cardinal{\BBR_{\ic{1,\horizon}}\np{\belief_0}} \cardinal{\UU} \cardinal{\OO}$, and thus
  we can solve Problem~\eqref{eq:dpomdp_general_formulation} in
  $O\np{\cardinal{\timeset} \cardinal{\BBR_{\ic{1,\horizon}}\np{\belief_0}} \cardinal{\UU}
    \cardinal{\OO}}$ operations.
\end{proof}

In order to apply Proposition~\ref{prop:alg_DP_belief_solves_dpomdp} on
Problem~\eqref{eq:dpomdp_general_formulation} and to get complexity bounds on the
\DPA, we now study the set of reachable beliefs
$\BBR_{\ic{1,\horizon}}\np{\belief_0}$, more specifically, we give bounds on its
cardinality.

\begin{theorem}
  \label{th:reachable_belief_bounds}
  % For a \dpomdp\ with $\cardinal{\UU}>1$,
  Consider a \dpomdp\ optimization problem given by
  Problem~\eqref{eq:dpomdp_general_formulation} which satisfies the finite sets
  Assumption~\ref{assumpt:pomdp_finite_sets}, and such that $\cardinal{\UU}>1$.
  For all initial belief $\belief_0 \in \Delta(\XX)$, the cardinality of the set of
  reachable beliefs starting from $\belief_0$, defined in
  Equation~\eqref{eq:def_BBR_t_tprime}, satisfies the following bound
  \begin{equation}
    \label{eq:bounds_dpomdp}
    \bcardinal{\BBR_{\ic{1,\horizon}}(\belief_0)} \leq
    \min \left( \left( 1 + \cardinal{\XX} \right)^{\cardinal{\supp \np{\belief_0}}},
      1+ \cardinal{\supp\np{\belief_0}} \cardinal{\UU}^{\cardinal{\timeset}} \right) \eqfinp
  \end{equation}
\end{theorem}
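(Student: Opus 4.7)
The plan is to establish the two upper bounds appearing inside the $\min$ separately, then take their minimum. The proof relies crucially on Lemma~\ref{lem:tau_as_pushforward}, which represents the belief dynamics $\beliefdynamics_{\timeindex}$ as a pushforward measure, so that every reachable belief at time $\timeindex \geq 1$ is the renormalized pushforward of $\belief_0$ through an iterated composition of the deterministic evolution mappings, after discarding the initial states whose trajectory is inconsistent with the observation history.

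For the first bound $\bp{1+\cardinal{\XX}}^{\cardinal{\supp\np{\belief_0}}}$, I would show that any reachable belief in $\BBR_{\ic{1,\horizon}}\np{\belief_0}$ is completely determined by a function $\psi: \supp\np{\belief_0} \to \overline{\XX}$. The idea is: given any realized control-observation history producing a non-cemetery belief at time $\timeindex$, each $\states \in \supp\np{\belief_0}$ either has a trajectory consistent with the observations (in which case $\psi\np{\states}$ is its time-$\timeindex$ state, an element of $\XX$) or is inconsistent (in which case $\psi\np{\states} = \cemetery$), and the corresponding belief is the normalized pushforward of $\belief_0\vert_{\na{\states : \psi\np{\states} \neq \cemetery}}$ through $\psi$. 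The cemetery belief $\cemeteryBelief$ itself corresponds to $\psi \equiv \cemetery$. Since the number of such functions is exactly $\np{1+\cardinal{\XX}}^{\cardinal{\supp\np{\belief_0}}}$, this delivers the first bound.

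For the second bound $1+\cardinal{\supp\np{\belief_0}}\cardinal{\UU}^{\cardinal{\timeset}}$, I would argue by a parametric counting. Fix $\timeindex \geq 1$: a non-cemetery reachable belief at time $\timeindex$ is determined by a pair $\np{u_{0:\timeindex-1}, \states_0}$ with $u_{0:\timeindex-1} \in \UU^{\timeindex}$ and $\states_0 \in \supp\np{\belief_0}$, because determinism forces the observation history $o_{0:\timeindex}$ to be a function of $\np{\states_0, u_{0:\timeindex-1}}$, and any two initial states producing the same history under the same controls yield identical posteriors at time $\timeindex$. Hence $\bcardinal{\BBR_{\timeindex}\np{\belief_0} \setminus \na{\cemeteryBelief}} \leq \cardinal{\supp\np{\belief_0}}\cardinal{\UU}^{\timeindex}$. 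Summing over $\timeindex \in \ic{1,\horizon}$ and adding one for the possible cemetery belief gives
\begin{equation*}
\bcardinal{\BBR_{\ic{1,\horizon}}\np{\belief_0}} \leq 1 + \cardinal{\supp\np{\belief_0}} \sum_{\timeindex=1}^{\horizon}\cardinal{\UU}^{\timeindex}.
\end{equation*}
The hypothesis $\cardinal{\UU} \geq 2$ then yields $\sum_{\timeindex=1}^{\horizon}\cardinal{\UU}^{\timeindex} \leq \cardinal{\UU}^{\horizon+1} = \cardinal{\UU}^{\cardinal{\timeset}}$ by a standard geometric-series estimate, producing the second bound.

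The main obstacle is the rigorous derivation of the functional encoding $\psi$ in the first bound: one must compose the belief dynamics across $\timeindex$ time steps and verify that consistency with the observation history, together with renormalization, leaves the pushforward structure intact at every step. This is exactly the content of Lemma~\ref{lem:tau_as_pushforward}, and once it is available the rest of the argument reduces to elementary set and geometric-series counting. Taking the minimum of the two bounds gives~\eqref{eq:bounds_dpomdp}.
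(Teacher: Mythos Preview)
Your proposal is correct and follows essentially the same two-part strategy as the paper: both bounds are established separately via the pushforward representation of Lemma~\ref{lem:tau_as_pushforward}, with the first bound coming from counting maps $\supp\np{\belief_0}\to\overline{\XX}$ and the second from decomposing over control sequences. Your argument for the second bound is slightly more elementary than the paper's: where you observe directly that every non-cemetery belief at time~$\timeindex$ is the image of some pair $\np{u_{0:\timeindex-1},\states_0}\in\UU^{\timeindex}\times\supp\np{\belief_0}$ (since an unrealizable observation history forces the cemetery belief), the paper derives the per-sequence bound $\bcardinal{\setsBeliefDynamicsUpInd{\controls_{0:\timeindex}}_{0:\timeindex}\np{\belief_0}\setminus\na{\cemeteryBelief}}\le\cardinal{\supp\np{\belief_0}}$ through a chain of support-conservation lemmas (Lemmas~\ref{lem:composeF}, \ref{lem:second-bound}, \ref{lem:hY}) exploiting that the observation preimages $\nseqa{\np{\observerfunct_{\timeindex}^{\controls}}^{-1}\np{\observer}}{\observer\in\OO}$ are pairwise disjoint. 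Both routes reach the same intermediate inequality; yours trades the reusable abstract machinery for a shorter direct count.
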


\begin{proof}
  A sketch of proof is postponed to \S\ref{sect:belief_dynamics_as_pushforward}, as it
  relies on a new representation of the belief dynamics presented in
  \S\ref{sect:belief_dynamics_as_pushforward}. The complete proof can be found in Appendix
  \S\ref{subsect:bounds_cardinal_sets_pushfowards}.
\end{proof}

The bound on the cardinality of the
set~$\BBR_{\ic{1,\timeindex}}\np{\belief_0}$,
obtained in Theorem~\ref{th:reachable_belief_bounds},
improves on a previous result, that we now recall.
  Littman presents in~\cite[Lemma 6.1]{littman_thesis} a bound on the set of reachable
  beliefs starting from belief $\belief_0 \in \Delta(\XX)$:
  \begin{equation}
    \forall \timeindex \in \timeset
    \eqsepv
    \bcardinal{\BBR_{\ic{0,\timeindex}}(\belief_0)}
    \leq \left( 1 + \cardinal{\XX} \right)^{\cardinal{\XX}} \eqfinp
    \label{eq:littman-bound}
  \end{equation}
Equation~\eqref{eq:bounds_dpomdp} is an improvement on the
bound given in Equation~\eqref{eq:littman-bound} which takes into account the support of
the initial belief $\belief_0$: indeed, as $\belief_0 \in \Delta\np{\XX}$ and
$\cardinal{\supp \np{\belief_0}} \le \cardinal{\XX}$,
Equation~\eqref{eq:bounds_dpomdp} is tighter than
% Equation~\eqref{eq:dpomdp_no_horizon_BBR_bound} is tighter than
Equation~\eqref{eq:littman-bound}.

Using Equation~\eqref{eq:bounds_dpomdp}, we obtain that the number of reachable beliefs of
a \dpomdp\ is finite even when considering the case of an infinite horizon. Indeed, the
first inequality in Equation~\eqref{eq:bounds_dpomdp} is well defined even in the infinite
horizon case.

% \jpc{C'est quoi le statut de ce qui suit ? un commentaire, un r\'{e}sultat ? }
A direct consequence of Proposition~\ref{prop:alg_DP_belief_solves_dpomdp} and
Theorem~\ref{th:reachable_belief_bounds} is that the complexity of the \DPA\ is
$O \bp{\cardinal{\BBR_{\ic{1,\horizon}} \np{\belief_0}} \cardinal{\timeset} \cardinal{\UU}
  \cardinal{\OO}}$, i.e. in
$O \left( \min \left( \left( 1 + \cardinal{\XX} \right)^{\cardinal{\supp \np{\belief_0}}},
    1 + \cardinal{\supp\np{\belief_0}} \cardinal{\UU}^{\cardinal{\timeset}} \right)
  \cardinal{\timeset} \cardinal{\UU} \cardinal{\OO} \right)$.

\subsection{Belief dynamics as pushforward measures}
\label{sect:belief_dynamics_as_pushforward}

Here, we expose another representation of the beliefs evolution mappings
$\nseqa{\beliefdynamics_{\timeindex}}{\timeindex \in \timeset\setminus\na{\horizon}}$
defined in Equation~\eqref{eq:beliefdynamics}, used in the proof of
Theorem~\ref{th:reachable_belief_bounds}. First, we recall the notion of \emph{pushforward
  measures} when considering finite sets. Second, we introduce the mappings necessary for
the new representation. We then present in Lemma~\ref{lem:tau_as_pushforward} the
representation of the belief dynamics as pushforward measures.

\begin{definition}
  \label{def:pushforward}
  Consider two finite sets $\AA$ and $\DD$ and a mapping $h: \AA \to \DD$. The
  \emph{pushforward measure} (or the \emph{image-measure}) of a probability measure
  $\mu \in \Delta\np{\AA}$ on the set~$\AA$ by the mapping $h$ is the probability measure
  $h_{\star}\mu \in \Delta\np{\DD}$ on the set $\DD$ defined by
  \begin{equation}
    \label{eq:def_push_forward}
    \np{h_{\star}\mu} (d) =
    \mu\bp{ h^{-1}(d)} = \sum_{a \in \AA, h(a)=d}  \mu \np{a}
    \eqsepv    \forall d \in \DD 
    \eqfinp
  \end{equation}
  We also denote by $h_\star$ the mapping from $\Delta\np{\AA}$ to $\Delta\np{\DD}$ such
  that $h_{\star}\np{\mu} = h_\star \mu$.
\end{definition}

Before presenting Lemma~\ref{lem:tau_as_pushforward}, we introduce the two mappings
$\pushforwardtransition^{\controls, \observer}_{\timeindex}$ and $\Renormalization$.
For each ordered pair
$\np{\controls, \observer} \in \UU \times \OO$, and each
$\timeindex \in \timeset \setminus \na{\horizon}$, we denote by
$\pushforwardtransition^{\controls, \observer}_{\timeindex}$ the self-mapping on the
extended state set $\overline{\XX}= \XX \cup \na{\cemetery}$ in~\eqref{eq:def_bar_XX}), defined by:
\begin{equation}
  \label{eq:def_Cal_T_dpomdp}
  \pushforwardtransition^{\controls,\observer}_{\timeindex}:  \overline{\XX}
  \to \overline{\XX}
  \eqsepv
  \overline{\states}
  \mapsto
  \begin{cases}
    \dynamics_{\timeindex}^\controls \np{\overline{\states}}
    &\text{if}\quad \overline{\states} \neq \cemetery \text{ and }
      \dynamics_{\timeindex}^\controls \np{\overline{\states}} \in
    \bp{\observerfunct_{\timeindex+1}^{\controls}}^{-1}\np{\observer} \eqfinv
    \\
    % \delta &\text{if}\quad \dynamics_{\timeindex} \np{\overline{\state}, \controls}
    % \not\in \bp{\observerfunct_{\timeindex+1}^{\controls}}^{-1}\np{\observer}
    \cemetery &\text{otherwise.}
    % \eqfinp
  \end{cases}
\end{equation}
The mapping $\pushforwardtransition^{\controls,\observer}_{\timeindex}$ hence applies the
dynamics $\dynamics_{\timeindex}$, as defined in
Problem~\eqref{eq:dpomdp_general_formulation}, given control $\controls$, and only keeps
the resulting state if it is consistent with observation $\observer$.
Meanwhile, the \emph{renormalization mapping}
$\Renormalization:\Delta\np{\overline{\PRIMAL}} \to \Delta\np{\overline{\PRIMAL}}$ is defined by
\begin{align}
  \Renormalization:
  \measureOnBarXX \in \Delta\np{\overline{\PRIMAL}}
  &\mapsto
    \begin{cases}
      \bp{\frac{1}{\measureOnBarXX\np{\PRIMAL}}\measureOnBarXX_{\vert_{\PRIMAL}},0}
      &\text{if } \measureOnBarXX\np{\PRIMAL}\not=0
      \eqsepv
      \\
      \delta_{\cemetery}
      &\text{if } \measureOnBarXX\np{\PRIMAL}=0
        % \eqfinv
        % XXX Notations
      \eqfinp
    \end{cases}
    \label{eq:definition_renormalisation}
\end{align}

We now express the belief dynamics as pushforward measures.

\begin{lemma}
  \label{lem:tau_as_pushforward}
  Let $\np{\controls, \observer} \in \UU \times \OO$ be given, and let
  $\timeindex \in \timeset \setminus \na{\horizon}$. The beliefs evolution mapping at
  time $\timeindex$, $\beliefdynamics_{\timeindex}$, defined in
  Equation~\eqref{eq:beliefdynamics} satisfies
  \begin{equation}
    \label{eq:equality_dynamics_belief_pushforward}
    \beliefdynamics_{\timeindex}\np{\belief, \controls, \observer} = \Renormalization
    \circ \np{\pushforwardtransition^{\controls,\observer}_{\timeindex}}_{\star} \np{\belief}
    \eqsepv
    \forall \belief \in \BB
    \eqfinv
  \end{equation}
  where
  $\np{\pushforwardtransition^{\controls,\observer}_{\timeindex}}_{\star} \np{\belief}$ is
  the pushforward measure of belief $\belief$ by
  $\pushforwardtransition^{\controls,\observer}_{\timeindex}$, as defined in~\eqref{eq:def_push_forward}.
\end{lemma}

\begin{proof}
  The proof is detailed in Appendix~\ref{sect:technical_lemmatas_pushforward}
  (page~\pageref{par:lemmata}).
\end{proof}
The meaning of Lemma~\ref{lem:tau_as_pushforward} is illustrated in
Figure~\ref{fig:illustration_belief_dynamics_as_pushforward}.
This new representation is of interest as, for all time $\timeindex \in \timesetNoHorizon$,
the composition of belief dynamics $\beliefdynamics_{\timeindex}$ is given by the
pushforward measure of the composition of mappings
$\pushforwardtransition^{\controls, \observer}_{\timeindex}$ for the relevant ordered pairs
$\np{\controls, \observer} \in \UU \times \OO$. Indeed, when considering a composition of
belief dynamics, we can factorize the renormalization mapping $\Renormalization$. We thus
apply the renormalization mapping~$\Renormalization$ to the composition of the pushforward
measures, which is the pushforward measure of the composition of mappings
$\pushforwardtransition^{\controls, \observer}_{\timeindex}$.
There is therefore an equivalence between studying the composition for time
$\timeindex \in \timesetNoHorizon$ of the belief dynamics $\beliefdynamics_{\timeindex}$
and the composition, for the relevant ordered pairs
$\np{\controls, \observer} \in \UU \times \OO$, of the mappings
$\pushforwardtransition^{\controls, \observer}_{\timeindex}$. Notably, we use this
representation to bound the cardinality of the set of reachable beliefs
(see Definition~\ref{def:reachable_space}), and thus study
the complexity of \DP\ for \dpomdp.
To do so, we introduce notations for sets and mappings.

\subsubsubsection{Notation for sets and mappings}
For any given sets $\DUAL$ and $\VV$, we denote by
% \mdl{cette notation est-elle utile ?}
$\Mappings{\YY}{\VV} = \VV^{\YY}$ the set of mappings from $\YY$ to $\VV$.
\begin{subequations}
  \begin{itemize}
  \item For all $\generalFunctionSet \subset \Mappings{\YY}{\VV}$, and
    $\Dual \subset \DUAL$, $B \subset \Delta\np{\DUAL}$, we
    introduce the notations
    $\generalFunctionSet(\Dual)$ and  $\generalFunctionSet_{\star}(B)$
    for the sets respectively defined by
    \begin{align}
      \generalFunctionSet(\Dual)
      = \bset{ \generalFunction(\dual)}{ \dual \in \Dual \text{ and }
      \generalFunction\in \generalFunctionSet}
      \subset {\VV}
      \eqsepv
      \generalFunctionSet_{\star}(B)
      = \bset{ \generalFunction_{\star} b}{ b \in B \text{ and }
      \generalFunction\in \generalFunctionSet}
      \subset \Delta\np{\VV}
      \eqsepv
      \label{eq:set-pushforward}
    \end{align}
    with the simplified notations $\generalFunctionSet({\dual})=\generalFunctionSet(\na{\dual})$
    and $\generalFunctionSet_{\star}(b) =\generalFunctionSet_{\star}(\na{b})$,
    for $\dual \in \Dual$ and $b \in \Delta\np{\DUAL}$.
    
  \item Given two subsets $\generalFunctionSet'$ and $\generalFunctionSet''$ of
    $\Mappings{\YY}{\YY}$ we introduce the subset
    $\generalFunctionSet'\circ \generalFunctionSet''$
    defined by
    \begin{equation}
      \generalFunctionSet'\circ \generalFunctionSet'' =
      \bset{ \generalFunction'\circ\generalFunction''}{ \generalFunction'\in
        \generalFunctionSet' \text{ and } \generalFunction''\in \generalFunctionSet''}
      \subset \Mappings{\YY}{\YY}
      \eqfinp
      \label{eq:set-mappings-composition}
    \end{equation}
  \item For any sequence $\nseqa{\generalFunctionSet_{\kindex}}{\kindex \in \NN}$, with
    $\generalFunctionSet_{\kindex} \subset \Mappings{\YY}{\YY}$ for all
    $\kindex \in \NN$, we introduce for any $\kindex \in \NN$ the subsets
    $\generalFunctionSet_{0:k}$ defined by
    \begin{equation}
      \forall k\in \NN \eqsepv
      \generalFunctionSet_{0:k} = \generalFunctionSet_k\circ \generalFunctionSet_{k-1}
      \circ \cdots \circ  \generalFunctionSet_0  \subset \Mappings{\YY}{\YY}
      \eqfinp
      \label{eq:F-0k}
    \end{equation}
  \end{itemize}
  \label{eq:set-notations}
\end{subequations}
  % \end{itemize}

For a fixed value of $\controls \in \UU$, and $\observer \in \OO$, for all $\timeindex
\in \timesetNoHorizon$,
we have obtained in Lemma~\ref{lem:tau_as_pushforward} that
$\beliefdynamics_{\timeindex}\np{\cdot, \controls, \observer} = \Renormalization
\circ \np{\pushforwardtransition^{\controls,\observer}_{\timeindex}}_{\star}$.
Now, we introduce the sets
\begin{subequations}
  \begin{align}
    \setsBeliefDynamics_{\timeindex}
    &= \bset{ \beliefdynamics_{\timeindex}\np{\cdot, \controls, \observer}}{ \controls \in
      \UU, \observer \in \OO} \subset \Mappings{\BB}{\BB}
      \eqsepv
      \forall \timeindex \in \timesetNoHorizon,
      \label{eq:def_setsBeliefDynamics}
    \\
    \setsBeliefDynamics
    &=
    \bigcup_{\timeindex \in \timesetNoHorizon} \setsBeliefDynamics_{0:\timeindex}
        \eqfinv
       \label{eq:def_setsBeliefDynamics_total}
    \\
    \setsPushForward_{\timeindex}
    &=\bset{  \pushforwardtransition^{\controls,\observer}_{\timeindex}}{ \controls \in
      \UU, \observer \in \OO} \subset \Mappings{\barXX}{\barXX}
      \eqsepv
      \forall \timeindex \in \timesetNoHorizon,
      \label{eq:def_setsPushForward}
    \\
    \setsPushForward
    &=
 \bigcup_{\timeindex \in \timesetNoHorizon}
      \setsPushForward_{0:\timeindex}
      \eqfinv
      \label{eq:def_setsPushforward_total}
  \end{align}
\end{subequations}
where the composition of sets of mappings used in Equations~\eqref{eq:def_setsBeliefDynamics_total}--\eqref{eq:def_setsPushforward_total} is defined in
Equations~\eqref{eq:set-mappings-composition}--\eqref{eq:F-0k}.
Moreover, we
call $\setsPushForward$, defined by Equation~\eqref{eq:def_setsPushforward_total}, the
\emph{set of pushforwards of the \dpomdp} defined by
Problem~\eqref{eq:dpomdp_general_formulation}.

\begin{lemma}
  \label{lem:equ_BBR_FF}
  Let $\belief_0 \in \Delta\np{\XX}$.
  The set $\BBR_{\ic{1,\horizon}}\np{\belief_0}$ of reachable
  beliefs from the initial belief $\belief_0$, as defined in Equation~\eqref{eq:def_BBR_t_tprime}, satisfies
 \begin{equation}
   \BBR_{\ic{1, \horizon}} \np{\belief_0} =
   \setsBeliefDynamics\np{\belief_0} =
   %\bigcup_{\timeindex \in \timesetNoHorizon}
   %\setsBeliefDynamics_{0:\timeindex} \np{\belief_0}
   %=
   \Renormalization \circ \np{\setsPushForward}_{\star} \np{\belief_0}
   \eqfinv
   \label{eq:equ_BBR_FF}
  \end{equation}
  where the two sets of mappings $\setsBeliefDynamics$ and $\setsPushForward$ are defined
  in Equations~\eqref{eq:def_setsBeliefDynamics_total}--\eqref{eq:def_setsPushforward_total}.
\end{lemma}

\begin{proof}
  The proof is detailed in Appendix~\ref{sect:technical_lemmatas_pushforward}
  (page~\pageref{par:lemmata}).
\end{proof}

Lemma~\ref{lem:equ_BBR_FF} is illustrated in
Figure~\ref{fig:illustration_composition_belief_dynamics_as_pushforward}.
A direct application of Lemma~\ref{lem:equ_BBR_FF} is that there is an equivalence between
studying the cardinality of $\BBR_{\ic{1, \horizon}} \np{\belief_0}$ and studying the
cardinality of $\np{\setsPushForward}_{\star} \np{\belief_0}$.

% \mdl{reprendre les images qui doivent inclure des macros car j'ai changé des notations}
\begin{figure}[htbp]
\begin{minipage}{0.45\textwidth}
  \begin{center}
    \includegraphics[scale=0.9]{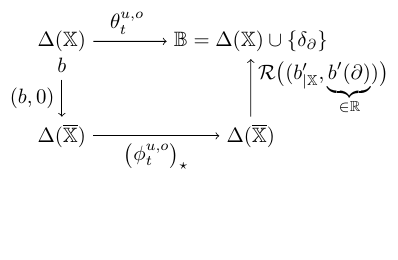}
  \caption{Illustration of the beliefs dynamics as pushforward measures}
\label{fig:illustration_belief_dynamics_as_pushforward}
\end{center}
\end{minipage}
\hspace{0.5cm}
\begin{minipage}{0.45\textwidth}
  \begin{center}
    \includegraphics[scale=0.9]{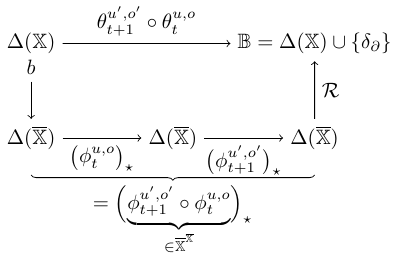}
  \caption{Illustration of the composition of beliefs dynamics as pushforward measures}
  \label{fig:illustration_composition_belief_dynamics_as_pushforward}
\end{center}
\end{minipage}
\end{figure}

We now present the postponed sketch of proof of
Theorem~\ref{th:reachable_belief_bounds}. A detailed proof can be found
in Appendix~\ref{subsect:bounds_cardinal_sets_pushfowards}.

\begin{proof}[Sketch of proof of Theorem~\ref{th:reachable_belief_bounds}]
  Let $\belief_0 \in \Delta\np{\XX}$ be given.

\noindent
$\bullet$  By Lemma~\ref{lem:equ_BBR_FF}, we have that
$\BBR_{\ic{1,\horizon}}(\belief_0) =
 \Renormalization \circ \np{\setsPushForward}_{\star} \np{\belief_0}$.

\noindent
$\bullet$  The first inequality
  $\cardinal{\BBR_{\ic{1,\horizon}}(\belief_0)} \leq \left( 1 + \cardinal{\XX}
  \right)^{\cardinal{\supp \np{\belief_0}}}$ comes from the fact that
  $\bcardinal{\np{\setsPushForward}_{\star} \np{\belief_0}}$ is bounded
  by the number of mappings from $\supp \np{\belief_0}$ to $\barXX$,
  as shown in Lemma~\ref{lem:restriction_counting_pushforward}.

\noindent
$\bullet$  Meanwhile, the second inequality
  $\bcardinal{\BBR_{\ic{1,\horizon}}(\belief_0)} \leq 1 +
  \cardinal{\supp\np{\belief_0}} \cardinal{\UU}^{\cardinal{\timeset}}$
  comes from the fact that, for any time and control
  $\np{\timeindex, \controls} \in \timesetNoHorizon \times \UU$,
  and for any belief $\belief \in \Delta\np{\XX}$, we have that
  $\sum_{\observer \in \OO}
  \bcardinal{\supp\bp{\np{\pushforwardtransition_{\timeindex}^{\controls,
          \observer}}_{\star}\belief}} \le \bcardinal{\supp\bp{\belief}}$
  by Lemma~\ref{lem:hY}. Therefore, for a given sequence of controls
  $\controls_{0:\timeindex} \in \UU^{\timeindex+1}$, there can be at most
  $\cardinal{\supp\np{\belief_0}}$ resulting beliefs (see Lemma~\ref{lem:second-bound}).
  As there are at most $\cardinal{\UU}^{\cardinal{\timeset}}$ such sequences
  $\controls_{0:\timeindex}$, $\timeindex \in \timesetNoHorizon$,
  this leads to
  $\bcardinal{\BBR_{\ic{1,\horizon}}(\belief_0)} \leq 1+ \cardinal{\supp\np{\belief_0}}
  \cardinal{\UU}^{\cardinal{\timeset}}$.
\end{proof}

We now present the subclass of \fullmdpomdp es (\Mdpomdp). 

\section{\Mdpomdp\ and complexity of \DP}
\label{sect:mdpomdp}

In this section, we introduce a subclass of \dpomdp s, \Mdpomdp s. First, we define this
subclass in \S\ref{subsect:def_mdpomdp}. Second,
in \S\ref{subsect:bounds_BBR_mdpomdp}, we present an improved bound on the cardinality of
the set of reachable beliefs for \Mdpomdp s compared to \dpomdp s. Third, in
\S\ref{subsect:reaching_pomdp_bound}, we show that the improved bound is tight.

\subsection{Definition of \MdpomdpFunctionSet\ and \Mdpomdp}
\label{subsect:def_mdpomdp}

Let us first define \separatedMappingSet s.

\begin{definition}
  \label{def:separatedMappingSet}
  Let $\YY_1$ and $\YY_2$ be two sets. A set
  $\generalFunctionSet \subset \Mappings{\YY_1}{\YY_2}$ of mappings from $\YY_1$ to
  $\YY_2$ is called a \emph{\separatedMappingSet} if
  % all ordered pairs of mappings
  % $\np{\generalFunction_1, \generalFunction_2} \in \generalFunctionSet^2$ satisfy
  \[
    \forall \np{\generalFunction_1, \generalFunction_2} \in
    \generalFunctionSet \times \generalFunctionSet \eqsepv
    \forall y \in \YY_1 \eqsepv
    \Bp{
    \generalFunction_1(y) = \generalFunction_2(y) \; \Longrightarrow \;
    \generalFunction_1 = \generalFunction_2}
    \eqfinp
  \]
\end{definition}
% \cyrille{revoir}
A \separatedMappingSet\ $\generalFunctionSet \subset \Mappings{\YY_1}{\YY_2}$ is hence a
set of mappings where all ordered pairs of mappings are either different everywhere, or equal
everywhere.
Otherwise stated, all the \emph{evaluation mappings} on set $\generalFunctionSet$ (i.e. the
mappings $\generalFunctionSet \to \YY_2, \generalFunction \mapsto \generalFunction\np{y}$,
for a fixed $y \in \YY_1$) are injective for all $y \in \YY_1$. For example, let
$\YY_1 = \ic{1, n}$ and
$\YY_2 = \RR$. Then, $\generalFunctionSet \subset \RR^{\YY_1}$ is identified with
$G \subset \RR^n$, and $\generalFunctionSet$ is a \separatedMappingSet\ if and only if the
projections of $G$ along each axis are injective.

In the special case where $\YY_1 = \YY_2 = \barXX$, with the extended set
$\barXX = \XX \cup \na{\cemetery}$ defined in Equation~\eqref{eq:def_bar_XX}, we want to
extend the above notion of \separatedMappingSet\ to tackle the added point $\cemetery$
in a specific way.
We thus introduce the notion of \cemeterySeparation\ for an ordered pair of self-mappings on the
set $\barXX$ and the notion of \MdpomdpFunctionSet.

\begin{definition}
  \label{def:cemeterySeparation_mapping_pair}
  % Let $\overline{\XX} = \XX \cup \na{\cemetery}$.
  An ordered pair
  $\np{\generalFunction_1, \generalFunction_2} \in \Mappings{\barXX}{\barXX}$ of
  self-mappings on the set $\barXX$ is \emph{\cemeterySeparated} if the restriction of the ordered pair
  $\np{\generalFunction_{1}, \generalFunction_{2}}$ to the set
  $\generalFunction_1^{-1}(\XX) \cap \np{\generalFunction_2}^{-1}(\XX)$ is separated.
  Moreover, a set $\generalFunctionSet$ of
  self-mappings on the set $\barXX$ is called a \emph{\MdpomdpFunctionSet} if all ordered pairs of
  mappings $\np{\generalFunction_1, \generalFunction_2} \in \generalFunctionSet^{2}$ are
  \cemeterySeparated.
\end{definition}

\begin{definition}
  \label{def:mdpomdp}
  % A \Mdpomdp\ is a \dpomdp\ such that the set of mappings
  A \Mdpomdp\ is a \dpomdp\ such that the set~$\setsPushForward$ of pushforwards
  of the \dpomdp, defined in Equation~\eqref{eq:def_setsPushforward_total}, 
  is a \MdpomdpFunctionSet.
\end{definition}
% \cyrille{a revoir}
Otherwise stated, for a \Mdpomdp, if two sequences of controls and observations lead to
the same state when starting in state $\states$, then applying the two sequences of
controls to another state $\states'$ either leads to the same state (with the same
sequence of observations), or at least one
sequence of controls leads to the cemetery point~$\cemetery$ (as we encounter a
different sequence of observations).

We now present a link between the notion of \separatedMappingSet\ and the notion of
\Mdpomdp. This allows us to propose a sufficient condition in order to ensure that a
\dpomdp\ is a \Mdpomdp.

\begin{proposition}
  \label{prop:dynamics_of_dpomdp_separated_ensure_mdpomdp}
  If the set
  $\bigcup_{\timeindex \in \timesetNoHorizon}
  \dynamics^{\UU^{\timeindex+1}}_{0:\timeindex} =
  \nset{\dynamics_{0:\timeindex}^{\controls_{0:\timeindex}}}{\forall \timeindex \in
    \timesetNoHorizon, \forall \controls_{0:\timeindex} \in \UU^{\timeindex+1}}$ of the
  composition of the evolution mappings of Problem~\eqref{eq:dpomdp_general_formulation}
  is a \separatedMappingSet\ (see Definition~\ref{def:separatedMappingSet}), then
  Problem~\eqref{eq:dpomdp_general_formulation} is a \Mdpomdp.
\end{proposition}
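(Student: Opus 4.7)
The plan is to show that the set $\setsPushForward$ of pushforwards of the \dpomdp, defined in Equation~\eqref{eq:def_setsPushforward_total}, is a \mdpomdpFunctionSet; by Definition~\ref{def:mdpomdp}, this is exactly the condition required for the \dpomdp\ to be a \mdpomdp. Thus the whole proof reduces to checking the \cemeterySeparation\ property for an arbitrary pair in $\setsPushForward$.

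First, I would establish a structural description of a generic mapping $\generalFunction \in \setsPushForward$. Unfolding Equations~\eqref{eq:def_Cal_T_dpomdp}, \eqref{eq:def_setsPushForward} and~\eqref{eq:F-0k}, such an $\generalFunction$ is of the form $\pushforwardtransition^{\controls_{\timeindex}, \observer_{\timeindex+1}}_{\timeindex} \circ \cdots \circ \pushforwardtransition^{\controls_0, \observer_1}_0$ for some $\timeindex \in \timesetNoHorizon$, a control sequence $\controls_{0:\timeindex} \in \UU^{\timeindex+1}$ and an observation sequence $\observer_{1:\timeindex+1} \in \OO^{\timeindex+1}$. A short induction on the length of the composition should yield: $\generalFunction(\cemetery) = \cemetery$, and for $\states \in \XX$, $\generalFunction(\states) = \dynamics_{0:\timeindex}^{\controls_{0:\timeindex}}(\states)$ whenever the trajectory generated by the dynamics from $\states$ under $\controls_{0:\timeindex}$ is consistent with all observations $\observer_{1:\timeindex+1}$, and $\generalFunction(\states) = \cemetery$ otherwise. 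In particular, on the set $\generalFunction^{-1}(\XX) \cap \XX$, the mapping $\generalFunction$ coincides with the composed dynamics $\dynamics_{0:\timeindex}^{\controls_{0:\timeindex}}$.

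Next, I would pick any two mappings $\generalFunction_1, \generalFunction_2 \in \setsPushForward$ with associated data $(\timeindex^i, \controls^i_{0:\timeindex^i}, \observer^i_{1:\timeindex^i+1})$ for $i \in \na{1,2}$, and set $D = \generalFunction_1^{-1}(\XX) \cap \generalFunction_2^{-1}(\XX)$. To verify the \cemeterySeparation\ of this pair, I would assume the existence of some $\states \in D$ with $\generalFunction_1(\states) = \generalFunction_2(\states)$. The structural description then gives $\dynamics_{0:\timeindex^1}^{\controls^1_{0:\timeindex^1}}(\states) = \dynamics_{0:\timeindex^2}^{\controls^2_{0:\timeindex^2}}(\states) \in \XX$, so the separated mapping assumption on $\bigcup_{\timeindex \in \timesetNoHorizon} \dynamics^{\UU^{\timeindex+1}}_{0:\timeindex}$ forces $\dynamics_{0:\timeindex^1}^{\controls^1_{0:\timeindex^1}} = \dynamics_{0:\timeindex^2}^{\controls^2_{0:\timeindex^2}}$ on the whole of $\XX$. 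Applying the structural description once more at any $\states'' \in D$ then yields $\generalFunction_1(\states'') = \generalFunction_2(\states'')$, so the restricted pair $(\generalFunction_1|_D, \generalFunction_2|_D)$ is separated. Thus, by Definition~\ref{def:cemeterySeparation_mapping_pair}, the pair is \cemeterySeparated; since it was arbitrary, $\setsPushForward$ is a \mdpomdpFunctionSet, and the \dpomdp\ is a \mdpomdp.

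The only step I expect to require some care is the inductive verification of the structural description: one must track how each successive mapping $\pushforwardtransition^{\controls_s, \observer_{s+1}}_s$ either continues the deterministic trajectory (when it matches the current observation) or collapses it to $\cemetery$, where it then remains for the rest of the composition. Every other step reduces to unfolding the relevant definitions and a single application of the separated mapping hypothesis.
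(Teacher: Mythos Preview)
Your proposal is correct and follows essentially the same argument as the paper. The paper's proof factors through its abstract machinery of \BackwardMappings\ sets (Lemma~\ref{lem:separated_mappings_to_cemetery_separated_mapping} and Corollary~\ref{cor:separated_familly_and_sets_leads_to_union_separated}): it observes that each $\pushforwardtransition^{\controls,\observer}_{\timeindex}$ is a backward mapping over $\dynamics_{\timeindex}^{\controls}$, so that every element of $\setsPushForward$ is a backward mapping over some $\dynamics_{0:\timeindex}^{\controls_{0:\timeindex}}$, and then invokes the general lemma that backward mappings over a separated set form a \mdpomdpFunctionSet. Your ``structural description'' is exactly this observation unpacked, and your final paragraph reproduces the proof of Lemma~\ref{lem:separated_mappings_to_cemetery_separated_mapping} in this specific instance. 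The only difference is packaging: the paper proves a reusable lemma in the backward-mappings language, whereas you carry out the same steps directly for $\setsPushForward$; neither route requires ideas the other lacks.
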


\begin{proof}
  The detailed
  proof is found in Appendix~\ref{sect:complements-det-pomdp}.
\end{proof}
Note that the observation mappings
$\nseqa{\observerfunct_{\timeindex}}{\timeindex \in \timesetNoHorizon}$ do not play any
role in Proposition~\ref{prop:dynamics_of_dpomdp_separated_ensure_mdpomdp}.

Now that we have defined the subclass of \Mdpomdp s, we present a bound on the cardinality of
the set of reachable beliefs for this subclass.

\subsection{Complexity analysis of \Mdpomdp}
\label{subsect:bounds_BBR_mdpomdp}

We now present the main interest of \Mdpomdp\ when compared to \dpomdp, namely that the
bound on cardinality of the set of reachable beliefs is lowered from
$\left( 1 + \cardinal{\XX} \right)^{\cardinal{\supp \np{\belief_0}}}$ to
$1 + \bp{2^{\cardinal{\supp \np{\belief_{0}} } } - \cardinal{\supp \np{\belief_0}}}
\cardinal{\XX}$.

\begin{theorem}
  \label{th:bound_belief_space_mdpomdp}
  Consider a \Mdpomdp\ optimization problem given by
  Problem~\eqref{eq:dpomdp_general_formulation} which satisfies the finite sets
  Assumption~\ref{assumpt:pomdp_finite_sets}.
  % Assume that Problem~\eqref{eq:dpomdp_general_formulation} is a \Mdpomdp\ and that
  % Assumption~\ref{assumpt:pomdp_finite_sets} holds.
  For any initial belief $\belief_0 \in \Delta\np{\XX}$, the cardinality of the set
  $\BBR_{\ic{1,\horizon}}(\belief_0)$ of
  reachable beliefs starting from~$\belief_0$ satisfies
  the following bound
  \begin{equation}
    \label{eq:mdpomdp_reachable_space_bound}
    \bcardinal{\BBR_{\ic{1,\horizon}}(\belief_0)}
    % \bcardinal{\bigcup_{\timeindex=1}^{\horizon} \BBR_{\timeindex}(\belief_0)}
    % \bcardinal{\bigcup_{\timeindex=1}^{\infty} \BBR_{\timeindex}(\belief_0)}
    \leq
    % \leq \bcardinal{ \supp \np{\belief_{0}} }
    1 + \bp{2^{\cardinal{\supp \np{\belief_{0}} } } - \cardinal{\supp \np{\belief_0}}}
    \cardinal{\XX}
    \eqsepv \forall \belief_0 \in \Delta\np{\XX}
    \eqfinp
  \end{equation}
\end{theorem}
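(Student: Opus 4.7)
The plan is to apply Lemma~\ref{lem:equ_BBR_FF}, which gives $\BBR_{\ic{1,\horizon}}\np{\belief_0} = \Renormalization \circ \np{\setsPushForward}_{\star} \np{\belief_0}$, and then exploit the $\cemetery$-separation of $\setsPushForward$ guaranteed by Definition~\ref{def:mdpomdp}. Write $S = \supp\np{\belief_0}$. For any $F \in \setsPushForward$, the renormalized belief $\Renormalization\bp{F_{\star}\belief_0}$ is entirely determined by the pair $\np{A_F, F|_{A_F}}$, where $A_F = F^{-1}\np{\XX} \cap S$: if $A_F = \emptyset$ the belief equals $\cemeteryBelief$, otherwise it equals the probability measure $\sum_{x \in A_F} \frac{\belief_0\np{x}}{\belief_0\np{A_F}} \delta_{F\np{x}}$ on $\XX$.

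I would then partition the non-cemetery reachable beliefs according to $A = A_F$, ranging over non-empty subsets of $S$. For each such $A$, define
\[
\generalFunctionSet_A = \bset{F|_A}{F \in \setsPushForward, A_F = A} \subset \Mappings{A}{\XX} \eqfinp
\]
The key step is to show that $\generalFunctionSet_A$ is a \separatedMappingSet\ in the sense of Definition~\ref{def:separatedMappingSet}. Take $F_1, F_2 \in \setsPushForward$ with $A_{F_1} = A_{F_2} = A$ and suppose $F_1\np{a} = F_2\np{a}$ for some $a \in A$. Since $\setsPushForward$ is a \mdpomdpFunctionSet, the pair $\np{F_1, F_2}$ is $\cemetery$-separated, so by Definition~\ref{def:cemeterySeparation_mapping_pair} the restriction of $\np{F_1, F_2}$ to $F_1^{-1}\np{\XX} \cap F_2^{-1}\np{\XX}$ is separated; the inclusion $A \subseteq F_1^{-1}\np{\XX} \cap F_2^{-1}\np{\XX}$ then yields $F_1|_A = F_2|_A$. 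Evaluation at any fixed $a \in A$ is therefore injective on $\generalFunctionSet_A$, giving $\cardinal{\generalFunctionSet_A} \leq \cardinal{\XX}$.

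To conclude, I tally as follows. One belief is $\cemeteryBelief$. Each of the $\cardinal{S}$ singleton subsets $A = \na{a}$ produces only Dirac masses $\delta_{F\np{a}}$, so the beliefs generated by all singletons together lie inside $\bset{\delta_x}{x \in \XX}$, a set of cardinality $\cardinal{\XX}$ --- this overlap is the crucial gain over the naive count $\cardinal{S}\cardinal{\XX}$. Each of the remaining $2^{\cardinal{S}} - 1 - \cardinal{S}$ non-empty, non-singleton subsets of $S$ contributes at most $\cardinal{\XX}$ beliefs via the bound on $\cardinal{\generalFunctionSet_A}$. Summing yields $1 + \cardinal{\XX} + \np{2^{\cardinal{S}} - 1 - \cardinal{S}} \cardinal{\XX} = 1 + \np{2^{\cardinal{S}} - \cardinal{S}} \cardinal{\XX}$, which is the announced bound.

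The main obstacle is the separation argument in the second paragraph: carefully unwinding Definition~\ref{def:cemeterySeparation_mapping_pair} to confirm that the inclusion $A \subseteq F_1^{-1}\np{\XX} \cap F_2^{-1}\np{\XX}$ correctly transfers the $\cemetery$-separation from the full set $\setsPushForward$ down to genuine separation of the restricted family $\generalFunctionSet_A$. The singleton-overlap observation is elementary but crucial, since without it one only recovers the weaker bound $1 + \np{2^{\cardinal{S}} - 1}\cardinal{\XX}$.
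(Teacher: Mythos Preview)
Your proof is correct and follows essentially the same route as the paper's: both partition the pushforwards in $\setsPushForward$ according to the ``active'' subset of $\supp\np{\belief_0}$ (your $A_F = F^{-1}\np{\XX}\cap S$; the paper's $\restrictionsetsPushForward{X}{\XX}$ combined with Equation~\eqref{eq:Backward-support-b0}), invoke \cemeterySeparation\ to bound each stratum by $\cardinal{\XX}$ (your direct separation argument; the paper's Lemma~\ref{lem:mdpomdp}), and collapse all singleton strata into the single set $\bset{\delta_x}{x\in\XX}$. Your presentation is slightly more streamlined in that you work with $A_F$ from the outset rather than first ranging over all $X\subset\XX$ and then restricting to the support, but the underlying argument is the same.
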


\begin{proof}
  The proof is detailed in Appendix~\ref{sect:complements-det-pomdp}.
\end{proof}

We have therefore an improved complexity bound of the \DPA\ for
\Mdpomdp\ compared with standard \dpomdp.
\begin{corollary}
  \label{cor:complexity_bounds_mdpomdp}
  Consider a \Mdpomdp\ optimization problem given by
  Problem~\eqref{eq:dpomdp_general_formulation} which satisfies the finite sets
  Assumption~\ref{assumpt:pomdp_finite_sets}.
 Then, the \DPA\ recalled in Proposition~\ref{prop:alg_DP_belief_solves_dpomdp}
solves (numerically) 
Problem~\eqref{eq:dpomdp_general_formulation} with complexity
\begin{equation}
    O \left( \min \left( 1 + \bp{2^{\cardinal{\supp \np{\belief_{0}} } } - \cardinal{\supp
            \np{\belief_0}}} \cardinal{\XX}, 1 + \cardinal{\supp\np{\belief_0}}
        \cardinal{\UU}^{\cardinal{\timeset}} \right) \cardinal{\timeset} \cardinal{\UU}
      \cardinal{\OO}
    \right)
    \eqfinp
 \end{equation}
\end{corollary}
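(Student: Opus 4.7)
The plan is to combine the general complexity statement for the dynamic programming algorithm on \dpomdp s with the two available bounds on the cardinality of the set of reachable beliefs, using the fact that every \mdpomdp\ is in particular a \dpomdp.

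First, I would invoke Proposition~\ref{prop:alg_DP_belief_solves_dpomdp}: since the \mdpomdp\ optimization problem in question is a \dpomdp\ satisfying Assumption~\ref{assumpt:pomdp_finite_sets}, the \DPA\ numerically solves Problem~\eqref{eq:dpomdp_general_formulation}, with complexity
\[
O\np{\cardinal{\timeset}\,\cardinal{\BBR_{\ic{1,\horizon}}\np{\belief_0}}\,\cardinal{\UU}\,\cardinal{\OO}}\eqfinp
\]
It then suffices to bound $\cardinal{\BBR_{\ic{1,\horizon}}\np{\belief_0}}$ by the minimum of the two quantities in the statement.

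Second, I would obtain the first bound $1+\bp{2^{\cardinal{\supp\np{\belief_{0}}}} - \cardinal{\supp\np{\belief_0}}}\cardinal{\XX}$ directly from Theorem~\ref{th:bound_belief_space_mdpomdp}, which applies because the problem is assumed to be a \mdpomdp. For the second bound, $1+\cardinal{\supp\np{\belief_0}}\cardinal{\UU}^{\cardinal{\timeset}}$, I would observe that a \mdpomdp\ is by Definition~\ref{def:mdpomdp} a particular \dpomdp, so Theorem~\ref{th:reachable_belief_bounds} applies (assuming $\cardinal{\UU}>1$; the case $\cardinal{\UU}=1$ is trivial since then there is at most one belief reachable per time step) and yields this second estimate as the second term in its minimum.

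Finally, combining these two bounds gives
\[
\cardinal{\BBR_{\ic{1,\horizon}}\np{\belief_0}} \leq
\min\!\left(1 + \bp{2^{\cardinal{\supp\np{\belief_{0}}}} - \cardinal{\supp\np{\belief_0}}}\cardinal{\XX},\;
1 + \cardinal{\supp\np{\belief_0}}\cardinal{\UU}^{\cardinal{\timeset}}\right)\eqfinv
\]
and substituting this into the complexity expression from Proposition~\ref{prop:alg_DP_belief_solves_dpomdp} produces exactly the announced $O(\cdot)$ bound. No step is genuinely difficult here: the corollary is purely a bookkeeping consequence of the earlier results, and the only point requiring a small amount of care is verifying that both Theorem~\ref{th:reachable_belief_bounds} and Theorem~\ref{th:bound_belief_space_mdpomdp} are simultaneously applicable to a \mdpomdp, which follows immediately from Definition~\ref{def:mdpomdp}.
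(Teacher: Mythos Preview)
Your proposal is correct and follows essentially the same approach as the paper's own proof: invoke Proposition~\ref{prop:alg_DP_belief_solves_dpomdp} for the generic complexity bound in terms of $\cardinal{\BBR_{\ic{1,\horizon}}(\belief_0)}$, then bound this cardinality by combining Theorem~\ref{th:bound_belief_space_mdpomdp} with the second term of Theorem~\ref{th:reachable_belief_bounds}. Your extra remark handling the case $\cardinal{\UU}=1$ is a small addition the paper omits, but otherwise the arguments coincide.
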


\begin{proof}
  By Proposition~\ref{prop:alg_DP_belief_solves_dpomdp}, the \DPA\
  solves Problem~\eqref{eq:dpomdp_general_formulation} and its complexity is
  $O \bp{\cardinal{\timeset} \cardinal{\BBR_{\ic{1,\horizon}}(\belief_0)} \cardinal{\UU}
    \cardinal{\OO}}$. Then, by Theorem~\ref{th:bound_belief_space_mdpomdp}, we
  have that 
  $\bcardinal{\BBR_{\ic{1,\horizon}}(\belief_0)} \leq 1 + \bp{2^{\cardinal{\supp
        \np{\belief_{0}} } } - \cardinal{\supp \np{\belief_0}}} \cardinal{\XX}$ and, by
  Theorem~\ref{th:reachable_belief_bounds}, we have that
  $ \bcardinal{\BBR_{\ic{1,\horizon}}(\belief_0)} \leq 1 + \cardinal{\supp\np{\belief_0}}
  \cardinal{\UU}^{\cardinal{\timeset}} $.
  % \[\bcardinal{\BBR_{\ic{1,\horizon}}(\belief_0)} \leq 1 + \cardinal{\supp\np{\belief_0}}
  % \cardinal{\UU}^{\cardinal{\timeset}} \eqfinp \]
\end{proof}

As the bound presented in Theorem~\ref{th:bound_belief_space_mdpomdp}
depends on the states that can be reached when starting from states
in the support of the initial belief, we can obviously improve the bound
when the support of the belief belongs to a subset of $\XX$ stable by
the dynamics $\nseqa{\dynamics_{\timeindex}}{\timeindex \in \timeset}$.

\begin{corollary}
  % \label{cor:improve_bound_mdpomdp}
  Assuming that Problem~\eqref{eq:dpomdp_general_formulation} is a \Mdpomdp, that
  Assumption~\ref{assumpt:pomdp_finite_sets} holds, that
  $\cardinal{\supp \np{\belief_0}} > 1$, that the evolution mappings
  $\nseqa{\dynamics_{\timeindex}}{\timeindex \in \timesetNoHorizon}$ of
  Problem~\eqref{eq:dpomdp_general_formulation} satisfy the property that there
  exists a subset $A \subset \XX$ such that, for all time
  $\timeindex \in \timesetNoHorizon$,
  $\dynamics_{\timeindex}\np{A, \UU} \subset A$. 
  Then, the bound presented in
  Theorem~\ref{th:bound_belief_space_mdpomdp} can be improved as follows:
  \begin{equation}
    \label{eq:bound_mdpomdp_stability_improvement}
    \supp \np{\belief_0} \subset A \implies
    \bcardinal{\BBR_{\ic{1,\horizon}}(\belief_0)}
    \leq
    1 + \bp{2^{\cardinal{\supp \np{\belief_{0}} } } - \cardinal{\supp \np{\belief_0}}}
    \cardinal{A}
    \eqsepv \forall \belief_0 \in \Delta\np{\XX}
    \eqfinp
  \end{equation}  
\end{corollary}

Now that we have a better complexity bound than with non-separated \dpomdp s, the
question is whether it is tight or not. We now show that it is.

\subsection[Existence of \Mdpomdp s with tight complexity bound]{Existence of \Mdpomdp s
  with tight complexity bound}
\label{subsect:reaching_pomdp_bound}
In Theorem~\ref{th:bound_belief_space_mdpomdp}, we have given an improved bound on the
cardinality of the set of reachable beliefs for \Mdpomdp\ compared with standard \dpomdp.
We now prove that the bound is tight.

\begin{proposition}
  \label{prop:reaching_the_bound}
  There exists a \mdpomdp\ such that equality is obtained in
  Equation~\eqref{eq:mdpomdp_reachable_space_bound}, that is,
  \begin{equation}
    \bcardinal{\BBR_{\ic{1,\horizon}}(\belief_0)}
    =
    1 + \bp{2^{\cardinal{\supp \np{\belief_{0}} } } - \cardinal{\supp \np{\belief_0}}}
    \cardinal{\XX}
    \eqfinp
    \label{eq:mdpomdp_reachable_space_bound_attained}
  \end{equation}
\end{proposition}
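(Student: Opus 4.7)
The plan is to exhibit an explicit \mdpomdp\ for which the bound in Equation~\eqref{eq:mdpomdp_reachable_space_bound_attained} is attained. Writing $k=\cardinal{\supp\np{\belief_0}}$, I first note that the right-hand side decomposes naturally as $1$ (the cemetery belief) $+\cardinal{\XX}$ (one Dirac per state, which can be produced by singleton subsets of $\supp\np{\belief_0}$) $+\np{2^{k}-k-1}\cardinal{\XX}$ (beliefs supported on subsets of $\XX$ of size at least two, at most $\cardinal{\XX}$ per subset $S\subseteq\supp\np{\belief_0}$ with $\cardinal{S}\geq 2$ according to the upper-bound mechanism behind Theorem~\ref{th:bound_belief_space_mdpomdp}). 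The construction should realize each of these beliefs as a distinct reachable one, by building a $\setsPushForward$ indexed by all pairs $\np{S,x^\star}\in\mathcal{P}\np{\supp\np{\belief_0}}\times\XX$.

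Concretely, I would take $\horizon=1$, set $\XX=\mathbb{Z}/n\mathbb{Z}$ with $n$ prime and large enough (for example $n>k^{2}$) to contain a Sidon set $\na{a_{1},\dots,a_{k}}$ of size~$k$, let $\supp\np{\belief_0}=\ic{1,k}$ with $\belief_0$ uniform on its support, choose $\UU=\mathcal{P}\bp{\ic{1,k}}\times\XX$ and $\OO=\na{\mathrm{accept},\mathrm{reject}}$, and leave the cost and admissibility data unconstrained (say $\costfunct_{\timeindex}\equiv 0$ and $\admcontrolset_{\timeindex}\equiv\UU$). Fixing once and for all some $c\in\mathbb{Z}/n\mathbb{Z}\setminus\na{a_{1},\dots,a_{k}}$, I would define the dynamics $\dynamics_0\bp{i,\np{S,x^\star}}=x^\star+a_{i}$ for $i\in\ic{1,k}$ and $\dynamics_0\bp{y,\np{S,x^\star}}=x^\star+c$ for $y\in\XX\setminus\ic{1,k}$, together with the observation function $\observerfunct_1\bp{y,\np{S,x^\star}}=\mathrm{accept}$ if and only if $y-x^\star\in\nset{a_{j}}{j\in S}$. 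A direct unfolding of Equation~\eqref{eq:def_Cal_T_dpomdp} then shows that the pushforward $\pushforwardtransition_{0}^{\np{S,x^\star},\mathrm{accept}}$ sends $i\in S$ to $x^\star+a_{i}$, every other $i\in\supp\np{\belief_0}$ to $\cemetery$, and every $y\in\XX\setminus\supp\np{\belief_0}$ to $\cemetery$ (since by choice of $c$, $x^\star+c\notin\nset{x^\star+a_{j}}{j\in S}$); this is the intended pushforward $F^{S,x^\star}$. The $\mathrm{reject}$ observation produces the complementary pushforward $F^{\supp\np{\belief_0}\setminus S,x^\star}$ on the support, augmented by the constant value $x^\star+c$ on the outside, which still yields one of the target beliefs after pushforward of~$\belief_0$.

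The next step is to verify that $\setsPushForward$ is a \mdpomdpFunctionSet. For any two $\mathrm{accept}$ pushforwards $F^{S_{1},x_{1}^\star}$ and $F^{S_{2},x_{2}^\star}$, the intersection $F^{-1}\np{\XX}\cap G^{-1}\np{\XX}$ restricted to $\supp\np{\belief_0}$ is $S_{1}\cap S_{2}$ (outside states contribute nothing by construction), and on that set the two maps take values $x_{1}^\star+a_{i}$ and $x_{2}^\star+a_{i}$, which either agree on all of $S_{1}\cap S_{2}$ when $x_{1}^\star=x_{2}^\star$ or differ on all of $S_{1}\cap S_{2}$ when $x_{1}^\star\neq x_{2}^\star$, matching Definition~\ref{def:cemeterySeparation_mapping_pair}. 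A parallel check, using the constant outside value $x^\star+c$, handles the cases involving $\mathrm{reject}$ pushforwards.

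Finally, using Lemma~\ref{lem:equ_BBR_FF} I would enumerate the reachable beliefs. The pushforwards with $S=\emptyset$ yield the cemetery belief; those with $S=\na{i}$ yield the Dirac at $x^\star+a_{i}$, which covers all of $\XX$ as $x^\star$ varies, producing $\cardinal{\XX}$ distinct Diracs; and each $F^{S,x^\star}$ with $\cardinal{S}\geq 2$ yields the uniform belief on the translate $x^\star+\nset{a_{j}}{j\in S}$. The Sidon property on $\na{a_{1},\dots,a_{k}}$ forbids any nontrivial coincidence $x_{1}^\star+\nset{a_{j}}{j\in S_{1}}=x_{2}^\star+\nset{a_{j}}{j\in S_{2}}$ with $\cardinal{S_{i}}\geq 2$ and $\np{S_{1},x_{1}^\star}\neq\np{S_{2},x_{2}^\star}$, and those beliefs have supports of size $\geq 2$ so they differ from the Diracs; summing yields $1+\cardinal{\XX}+\np{2^{k}-k-1}\cardinal{\XX}=1+\np{2^{k}-k}\cardinal{\XX}$, matching Equation~\eqref{eq:mdpomdp_reachable_space_bound_attained}. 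The main obstacle is precisely this last distinctness argument: the translational symmetry of $\mathbb{Z}/n\mathbb{Z}$ would merge distinct pairs $\np{S,x^\star}$ into the same belief under a naive shift-based construction, and the Sidon structure is exactly what is needed to defeat this symmetry while preserving the injectivity that underlies the \cemeterySeparation\ verification.
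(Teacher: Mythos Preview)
Your construction is correct and takes a genuinely different route from the paper. The paper exhibits the smallest nontrivial instance: three states, two controls, two observations, rotation dynamics $\dynamics(\states_i,\bar\controls_2)=\states_{i+1\bmod 3}$, an initial belief with $\cardinal{\supp\np{\belief_0}}=2$; it certifies separation via Proposition~\ref{prop:dynamics_of_dpomdp_separated_ensure_mdpomdp} and then simply tabulates the seven reachable beliefs. Your approach instead builds, for any $k=\cardinal{\supp\np{\belief_0}}$ and any sufficiently large prime $n=\cardinal{\XX}$, a one-step problem with a rich control set $\UU=\mathcal{P}\bp{\ic{1,k}}\times\XX$ whose accept-pushforwards realise every candidate belief directly; the Sidon property is precisely the obstruction to a nontrivial coincidence $x_1^\star+\nset{a_j}{j\in S_1}=x_2^\star+\nset{a_j}{j\in S_2}$ when $\cardinal{S_i}\geq 2$. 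The paper's example is minimal in $\cardinal{\UU}$ and $\cardinal{\OO}$ but specific to $k=2$ (with a remark on extending $n$); yours handles arbitrary $k$ at the price of an exponentially large control set.

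Two minor points. First, your direct \cemeterySeparation\ check is more work than needed: since $\dynamics_0^{(S,x^\star)}$ does not depend on $S$, the set $\nset{\dynamics_0^{(S,x^\star)}}{(S,x^\star)\in\UU}$ is visibly a \separatedMappingSet\ (agreement at one state forces $x_1^\star=x_2^\star$), so Proposition~\ref{prop:dynamics_of_dpomdp_separated_ensure_mdpomdp} applies immediately with $\horizon=1$. Second, the ``parallel check'' for reject pushforwards does go through, but it is worth spelling out that on $\XX\setminus\ic{1,k}$ the reject map is the constant $x^\star+c$, so agreement there already pins down $x^\star$ and hence forces agreement on the whole common $\XX$-preimage; this is the one place where the choice $c\notin\na{a_1,\dots,a_k}$ matters beyond the accept case.
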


\begin{proof}
  We exhibit a simple \mdpomdp\ for which the set~$\BBR_{\ic{1,\horizon}}(\belief_0)$
of reachable beliefs     satisfies
  Equation~\eqref{eq:mdpomdp_reachable_space_bound_attained}. Following the framework of
Sect.~\ref{subsect:formulation_dpomdp}, let $\XX = \na{ \states_1, \states_2, \states_3}$
  consists of
  three distinct states, $\OO = \na{ \bar{\observer}_{1}, \bar{\observer}_{2}}$ of two distinct observations,
  and $\UU = \na{ \bar{\controls}_{1}, \bar{\controls}_{2}}$ of two distinct controls. The
  evolution mappings are defined as
  $\forall \states \in \XX$, $\dynamics(\states, \bar{\controls}_{1}) = \states$, and
  $\forall i \in \na{1, 2, 3}$, $\dynamics(\states_i, \bar{\controls}_{2}) =
  \states_{\modulo\np{i,3}+1}$, where $\modulo \np{i, 3}$ is the remainder of the Euclidean
  division of the natural number~$i$ by~$3$.
  Finally, the observation mapping is given by
  $\observerfunct\np{\states, \controls} = \bar{\observer}_{2}$
  if~$\states = \states_3$ and~$\controls = \bar{\controls}_{1}$,
  and by~$\observerfunct\np{\states, \controls} = \bar{\observer}_{1}$
  otherwise.

  We show in Figure~\ref{fig:representation_pushforward_mdpomdp_bound_reached} the
  mappings $\pushforwardtransition^{\np{\controls, \observer}}$ defined in
  Equation~\eqref{eq:def_Cal_T_dpomdp} for this simple case, and we illustrate the
  dynamics and observation mappings in
  Figure~\ref{fig:representation_dynamics_observations_mdpomdp_bound_reached}.

% \mdl{reprendre les images qui doivent inclure des macros car j'ai changé des notations}

\begin{figure}[!htp]
  \begin{minipage}[t]{0.56\textwidth}
    \includegraphics[scale=0.8]{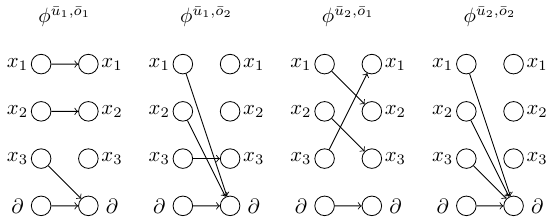}
  \caption{Representation of the $\pushforwardtransition^{\np{\controls, \observer}}$
    mappings in the case of \S\ref{subsect:reaching_pomdp_bound}
  }
  \label{fig:representation_pushforward_mdpomdp_bound_reached}
\end{minipage}
\hspace{0.5cm}
\begin{minipage}[t]{0.35\textwidth}
  \includegraphics[scale=0.8]{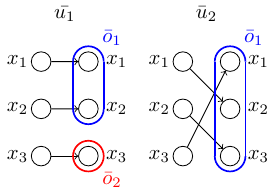}
  \caption{Representation of the dynamics and the observations depending on the control in
    the case of \S\ref{subsect:reaching_pomdp_bound}
  }
  \label{fig:representation_dynamics_observations_mdpomdp_bound_reached}
\end{minipage}
\end{figure}

By adding a cost function $\costfunct$, a horizon $\horizon > 0$ and admissibility
constraints $\admcontrolset: \XX \rightrightarrows \UU$, the resulting problem has all
the ingredients of a \dpomdp\ (as presented in Sect.~\ref{subsect:formulation_dpomdp}), where
Assumption~\ref{assumpt:pomdp_finite_sets} holds.

% Moreover, it satisfies the \HypothesisMdpomdp.
We now prove that the resulting \dpomdp\ is a \Mdpomdp. For that purpose, we enumerate
all the possible results of the dynamics before applying
Proposition~\ref{prop:dynamics_of_dpomdp_separated_ensure_mdpomdp}.
Let us consider a sequence 
$\np{\controls_{1}, \dots, \controls_{\timeindex}} \in \UU^{\timeindex}$
of controls.
By denoting $\dynamics^{\controls_{1:\timeindex}}$ the compositions of dynamics (i.e.
$\dynamics^{\controls_{1:\timeindex}}(\states) = \dynamics^{\controls_{\timeindex}} \circ
\dots \circ \dynamics^{\controls_1} \np{\states}$), we have that, for all $i \in \ic{1, 3}$,
 $ \dynamics^{\controls_{1:\timeindex}}(\states_i) = \states_{\modulo(i + \gamma
    \np{\controls_{1:\timeindex}} - 1, 3) + 1}$ ---
where $\gamma$ is the function that counts the number of times $\bar{\controls}_{2}$
appears in a sequence of controls (the function~$\gamma$ is defined as $
% \[
  \gamma: \UU^{\timeindex} \to \NN $, $
  \controls_{1:\timeindex} \mapsto
  \bcardinal{\nset{ \controls_i, i \in \ic{1, \timeindex}}{ \controls_i =
      \bar{\controls}_{2}}}
  % \eqfinp
% \]
$).

The set
$\nset{\dynamics^{\controls_{1:\timeindex}}}{\controls_{1:\timeindex} \in
  \UU^{\timeindex}}$ is thus such that, for all sequences of controls
$\np{\controls_{1:\timeindex}, \controls'_{1:\timeindex'}} \in \UU^{\timeindex} \times
\UU^{\timeindex'}$, if there exists a state $\states \in \XX$ such that
$\dynamics^{\controls_{1:\timeindex}}(\states) =
\dynamics^{\controls'_{1:\timeindex'}}(\states)$, then we get that, for any state $\states' \in \XX$,
$\dynamics^{\controls_{1:\timeindex}}(\states') =
\dynamics^{\controls'_{1:\timeindex'}}(\states')$. Hence, the set
$\cup_{\timeindex \in \timesetNoHorizon}\dynamics_{0:\timeindex}^{\UU^{\timeindex+1}}$ is a
\separatedMappingSet. By
Proposition~\ref{prop:dynamics_of_dpomdp_separated_ensure_mdpomdp}, the optimization
problem is hence a \Mdpomdp.

We now choose an initial belief $\belief_{0}$ such that $\supp\np{\belief_{0}} =
\na{\states_1, \states_2}$, for which we can compute explicitly the reachable
beliefs (see Definition~\ref{def:reachable_space}).
We can apply
Theorem~\ref{th:bound_belief_space_mdpomdp} with such initial belief. Therefore, according
to Equation~\eqref{eq:mdpomdp_reachable_space_bound}, there can be at most $7$ reachable
beliefs (including $\cemeteryBelief$). In
Table~\ref{tab:resulting_support_proof_attained_bound}, we enumerate all possible supports
of the reachable beliefs when starting with belief~$\belief_0$.

\begin{table}[htbp]
  \centering
  \begin{tabular}{|c|c|c|c|}
    \hline
    Mapping applied & Support of resulting belief \\%& Mapping applied & Support of resulting belief \\
    \hline
    $\pushforwardtransition^{\bar{\controls}_{1}, \bar{\observer}_{1}}$ & $\na{\states_1, \states_2}$ \\
    $\pushforwardtransition^{\bar{\controls}_{2}, \bar{\observer}_{1}}$ &  $\na{\states_2, \states_3}$ \\
    $\pushforwardtransition^{\bar{\controls}_{2}, \bar{\observer}_{1}} \circ \pushforwardtransition^{\bar{\controls}_{2}, \bar{\observer}_{1}}$ & $\na{\states_3, \states_1}$\\
    $\pushforwardtransition^{\bar{\controls}_{1}, \bar{\observer}_{2}} \circ \pushforwardtransition^{\bar{\controls}_{2}, \bar{\observer}_{1}}$ & $\na{\states_3}$ \\
    $\pushforwardtransition^{\bar{\controls}_{2}, \bar{\observer}_{1}} \circ \pushforwardtransition^{\bar{\controls}_{1}, \bar{\observer}_{2}} \circ \pushforwardtransition^{\bar{\controls}_{2}, \bar{\observer}_{1}}$ & $\na{\states_1}$ \\
    $\pushforwardtransition^{\bar{\controls}_{2}, \bar{\observer}_{1}} \circ \pushforwardtransition^{\bar{\controls}_{2}, \bar{\observer}_{1}} \circ \pushforwardtransition^{\bar{\controls}_{1}, \bar{\observer}_{2}} \circ \pushforwardtransition^{\bar{\controls}_{2}, \bar{\observer}_{1}}$ & $\na{\states_2}$ \\
   $\pushforwardtransition^{\bar{\controls}_{1}, \bar{\observer}_{2}}$ & $\na{\cemetery}$ \\

    \hline
  \end{tabular}
  \caption{Resulting support when applying given mappings to the initial belief
    $\belief_0$ with $ \supp \np{\belief_0} = \na{\states_1, \states_2}$}
  \label{tab:resulting_support_proof_attained_bound}
\end{table}

We have therefore $7$ different supports for the reachable beliefs, hence at least $7$
beliefs in the set of reachable beliefs starting from $\belief_0$. As
Equation~\eqref{eq:mdpomdp_reachable_space_bound} states that there can be at most $7$
reachable beliefs, we obtain that we have exactly $7$ reachable beliefs and thus
Equation~\eqref{eq:mdpomdp_reachable_space_bound_attained} is obtained.
\end{proof}

\begin{remark}
Note that, whereas the proof of Proposition~\ref{prop:reaching_the_bound} was made with a
\Mdpomdp\ with $\cardinal{\XX} = 3$, we can generate a \Mdpomdp\ such that equality is
obtained in Equation~\eqref{eq:mdpomdp_reachable_space_bound} for a set of any cardinality
$\cardinal{\XX} = n$, $n \geq 3$. We need once again that
$\XX = \nseqa{\states_i}{i \in \ic{1, n}}$ consists of $n$ distinct states,
$\OO = \na{ \bar{\observer}_{1}, \bar{\observer}_{2}}$ of two distinct observations and
$\UU = \na{ \bar{\controls}_{1}, \bar{\controls}_{2}}$ of two distinct controls. Then, the
dynamics is given by
$\forall \states \in \XX \eqsepv \dynamics(\states, \bar{\controls}_{1}) = \states$, and
$\forall i \in \ic{1, n}$,
$\dynamics(\states_i, \bar{\controls}_{2}) = \states_{\modulo\np{i,n}+1}$.
Finally, the observation mapping is given by
$\observerfunct\np{\states, \controls} = \bar{\observer}_{2}$
if~$\states = \states_n$ and~$\controls = \bar{\controls}_{1}$,
and by~$\observerfunct\np{\states, \controls} = \bar{\observer}_{1}$
otherwise.
%Finally, the observation mapping is given by
%$\observerfunct\np{\states, \controls} =
%\begin{cases}
%  \bar{\observer}_{2} ~~\text{if }\states = \states_{n} \text{ and }
%  \controls = \bar{\controls}_{1} \eqfinv \\
%  \bar{\observer}_{1} ~~\text{otherwise} \eqfinp
%\end{cases}$
\end{remark}

Now that we have presented the subclass of \Mdpomdp s, we give a numerical
illustration.

\section{Numerical application on an example of \Mdpomdp}
\label{sect:dpomdp_illustration}

In this section, we present a simple one-dimensional illustration of \Mdpomdp. We consider
that we empty a tank while minimizing an associated cost, as illustrated in
Figure~\ref{fig:illustration_bathtub}. The state is one-dimensional and consists in the
volume of water present in the tank. The control is also one-dimensional and is the amount
of water that the decision-maker removes during one time step. The decision-maker has
access at time $\timeindex$ to partial observation, as she only knows that the volume of
water in the tank is between two quantized levels.

\subsection{A partially observed tank as a \Mdpomdp}
More precisely, the problem is the following.

\begin{figure}[htbp]
  \begin{minipage}{0.6\linewidth}
\begin{itemize}
\item The state $\states$ consists of a discrete volume of water in the tank,
  with \( 0 \leq \states^{(1)} \leq \states^{(2)} \leq \cdots \leq \states^{(\inistatediscretization)} \) and 
  \\
  $\states \in \XX =
  \na{\states^{(1)}, \states^{(2)}, \dots , \states^{(\inistatediscretization)}} \subset
  \RR_+$ of finite cardinality $\inistatediscretization$.
\item The observation $\observer$ consists of a discrete level of water in the tank, with
\( 0 \leq \observer^{(1)} \leq \observer^{(2)} \leq \cdots \leq \observer^{(\observerdiscretization)}\)
and 
  \\$\observer \in \OO = \na{\observer^{(1)}, \observer^{(2)}, \dots,
    \observer^{(\observerdiscretization)}} \subset
  \RR_+$ of finite cardinality $\observerdiscretization$.
\item The control
  $\controls$ consists of a discrete volume of water to be removed, with
\( 0 \leq \controls^{(1)} \leq \controls^{(2)} \leq \cdots \leq \controls^{(\controlsdiscretization)}\)
and
\\$\controls \in \UU = \na{\controls^{(1)}, \controls^{(2)}, \dots,
    \controls^{(\controlsdiscretization)}} \subset \RR_+$ of finite cardinality
  $\controlsdiscretization$.
\item The unitary cost of water at each time $\timeindex \in \timesetNoHorizon$ is given
  by $\cost_{\timeindex} \in \RR$.
\end{itemize}
\end{minipage}
\hspace{0.3cm}
  \begin{minipage}{0.35\linewidth}
\begin{center}
  \includegraphics{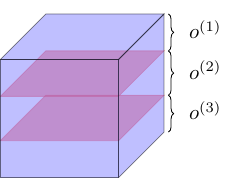}
\end{center}
\caption{Illustration of the water tank ``quantum'' of observation
  ($\observerdiscretization = 3$)}
\label{fig:illustration_bathtub}
\end{minipage}
\end{figure}

\subsubsubsection{Optimization problem}
We now adapt Problem~\eqref{eq:dpomdp_general_formulation} to the tank case
presented above:
\begin{subequations}
    \begin{align}
        \min_{\statesva, \controlsva, \observerva}
        % \min_{\statesva_{\timeindex}, \controlsva_{\timeindex}, \observerva_{\timeindex}}
        & \espe \Bc{\sum_{\timeindex=0}^{\horizon-1} \cost_{\timeindex}
          \controlsva_{\timeindex} }
          \label{eq:obj_tank}\\
       %%%%
       s.t. ~
       & \PP_{\statesva_{0}} = \knownproba_0 \eqfinv
         \label{eq:initialisation_tank}\\
       %%%%
       & \statesva_{\timeindex+1} = \statesva_{\timeindex} - \controlsva_{\timeindex}
         \eqsepv \forall \timeindex \in \timeset \setminus \na{\horizon} \eqfinv
         \label{eq:dynamics_tank} \\
       %%%%
       & \controlsva_{\timeindex} \in \nset{ \controls^{(i)} \in \UU}{\controls^{(i)} \leq
          \statesva_{\timeindex}}
         \eqsepv \forall \timeindex \in \timeset \setminus \na{\horizon} \eqfinv
         \label{eq:admissibility_tank}\\
       % & \controlsva_{\timeindex} \in \left[ 0, \observerva_{\timeindex} \right]
       % \eqsepv \forall \timeindex \in \timeset \eqfinv \label{eq:admissibility_tank}\\
       %%%%
       & \observerva_{\timeindex} = \max \nset{ \observer^{(j)} \in \OO} { \statesva_{\timeindex}
           \geq \observer^{(j)} }
         \eqsepv \forall \timeindex \in \timeset \eqfinv
         \label{eq:observation_tank}\\
       %%%%
       & \sigma(\controlsva_{\timeindex}) \subset \sigma\left( \observerva_0, \dots,
           \observerva_{\timeindex}, \controlsva_{0}, \dots, \controlsva_{\timeindex-1}
       \right)
       \eqsepv \forall \timeindex \in \timeset \setminus \na{\horizon} \eqfinp
       \label{eq:non_anticipativity_tank}
    \end{align}
    \label{eq:formulation_dpomdp_tank}
\end{subequations}

Equation~\eqref{eq:obj_tank} represents the objective function of the tank problem, i.e.
% the
% implementation of
Equation~\eqref{eq:dpomdp_gen_obj} of
Problem~\eqref{eq:dpomdp_general_formulation}. The instantaneous cost function at time
$\timeindex$ is defined as
$\costfunct_{\timeindex}(\controls_{\timeindex}) = \cost_{\timeindex}
\controls_{\timeindex}$, and hence only depends on the controls.
The evolution mapping corresponds to emptying the tank and is given by
$\dynamics: \np{\states, \controls} \mapsto \states - \controls$, which gives
Equation~\eqref{eq:dynamics_tank}.
The observation mapping $\observerfunct$ is given by a piecewise constant function which
does not depend on the controls $\controls$:
$\observerfunct(\states) = \max \nset{\observer^{(i)}}{\states \geq \observer^{(i)}}$.
This leads to equation \eqref{eq:observation_tank}, which
is the implementation of \eqref{eq:dpomdp_gen_observation_def}.
The admissibility set of the tank problem is given by
$\admcontrolset(\statesva_{\timeindex}) = [0, \statesva_{\timeindex}]$ (see
Equation~\eqref{eq:admissibility_tank}). It ensures that we cannot remove more water than
what is in the tank.

Problem~\eqref{eq:formulation_dpomdp_tank} has the same form as
Problem~\eqref{eq:dpomdp_general_formulation}. It is therefore a \dpomdp\ and all the
relevant results presented in \S\ref{sect:dp_for_dpomdp} hence apply.

\subsubsubsection{The partially observed tank problem as a \Mdpomdp}
The tank \dpomdp\ is a \Mdpomdp\ as a direct consequence of
Corollary~\ref{cor:affine_dynamics_mdpomdp}, in
Appendix~\ref{sect:complements-det-pomdp}. Indeed,
Corollary~\ref{cor:affine_dynamics_mdpomdp} states that if the evolution mappings
$\dynamics_{\timeindex}$ of a \dpomdp\ are linear, then it is a \Mdpomdp. As the evolution
function $\dynamics$ of the partially observed tank is indeed linear, the tank \dpomdp\ is
a \Mdpomdp.

\subsubsubsection{Associated beliefs dynamics $\beliefdynamics$}
Let $(\belief, \controls, \observer) \in \BB \times \UU \times \OO$, with
$\BB = \Delta\np{\XX} \cup \na{\cemeteryBelief}$, as defined in
Equation~\eqref{eq:def_BB}. As the evolution mappings and observation mappings are
stationary, the belief dynamics are also stationary.

By Equation~\eqref{eq:dynamics_tank}, we have
$\np{\dynamics^{\controls}}^{-1} \np{\nextstates} = \nextstates + \controls$.
As the observation mapping~$\observerfunct$ does not depend on the
control~$\controls$, $\intervalStates\np{\observer}$ is 
the set of states compatible with the observation~$\observer$.
Hence, 
the function~$\ProbaObservation$ in~\eqref{eq:def_proba_observation} is here
\begin{equation*}
  \ProbaObservation: \BB \times \UU \times \OO \to \left[ 0, 1 \right],
  (\belief, \controls, \observer) \mapsto
  \sum_{\states\in \intervalStates \np{\observer} - \controls
  }
  \belief(\states)
  \eqfinv
\end{equation*}
and Equation~\eqref{eq:beliefdynamics} gives
\[ \beliefdynamics(\belief, \controls, \observer) (\nextstates) =
\begin{cases}
  \frac{
  {\displaystyle
  \belief(\nextstates + \controls)}
  }
  {
  {\displaystyle
  \sum_{ \states' \in \intervalStates \np{\observer} - \controls
  }
  \belief(\states')
  }
  }
  &\text{if} \quad
    \nextstates \in
    \intervalStates \np{\observer} - \controls
    \eqfinv
  \\
  0
  &\text{if} \quad \nextstates \not\in
    \intervalStates \np{\observer} - \controls
    \eqfinp
\end{cases}
\]

\subsubsubsection{Bellman equations for the partially observed tank problem}
As Problem~\eqref{eq:formulation_dpomdp_tank} is a \dpomdp\ and the finite sets
Assumption~\ref{assumpt:pomdp_finite_sets} holds, we can apply
Proposition~\ref{prop:dynamics_belief_and_bellman}.
Equations~\eqref{eq:Bellman_DPOMDP_general_final}
and~\eqref{eq:Bellman_DPOMDP_general_bis} are here
\begin{subequations}
\begin{align}
    \valuefct_{\horizon}
    &: \BBR_{\horizon}(\belief_0) \to \RR
    \eqsepv \belief \mapsto 0
    \label{eq:Bellman_DPOMDP_tank_final}
    \\
    \valuefct_{\timeindex}
    &: \BBR_{\timeindex}(\belief_0) \to \RR
    \eqsepv  \belief \mapsto
    \min_{\controls \leq \min_{\states \in \supp \np{\belief}}\states}
    \Bp{
      \cost_{\timeindex}
      \controls
      + \sum_{\quad \observer \in \OO \quad}
      \sum_{\states - \controls \in \intervalStates \np{\observer}}  \belief(\states)
      \valuefct_{\timeindex+1}
      \bp{
        \beliefdynamics\np{ \belief, \controls, \observer}
      }
    }
    \eqfinp
    \label{eq:Bellman_DPOMDP_tank_bis}
\end{align}
\label{eq:Bellman_dpomdp_tank}
\end{subequations} 
Indeed, the intersection
$\beliefadmcontrolset_{\timeindex}(\belief) = \bigcap_{\states \in \supp\np{\belief}}
\admcontrolset_{\timeindex}(\states)$
is $\nset{\controls^{(i)} \in \UU}{\controls \leq \min_{\states \in \supp
      \np{\belief}}\states}$,
as the admissibility set is given by
Equation~\eqref{eq:admissibility_tank}, and as
\[\nset{ \controls^{(i)} \in \UU}{\controls^{(i)} \leq
    \states^{(j)}} \cap \nset{ \controls^{(i)} \in \UU}{\controls^{(i)} \leq
    \states^{(k)}} = \nset{ \controls^{(i)} \in \UU}{\controls^{(i)} \leq
    \min\bp{\states^{(j)}, \states^{(k)}}}
  \eqfinp
\]

\subsection{Numerical results}

We now present numerical results for the tank problem described by
Problem~\eqref{eq:formulation_dpomdp_tank}.

\subsubsubsection{Presentation of the instances}
We take the following data:
\begin{itemize}
\item $\XX = \ic{0, 300}$,
\item $\UU = \ic{0, 9}$,
\item $\OO = \na{0, 1, 20, 40, 60, 80, 100, 120, 140, 160, 180, 200, 220, 240,
    260, 280, 300}$,
\item $\timeset = \ic{0, 100}$,
\item $\supp \np{\belief_0} = \ic{260, 300}$, with a randomly generated probability
  distribution over that support, detailed in
  Figure~\ref{fig:value_b_0_bathtub}.
  % Table~\ref{tab:value_b_0_bathtub}.
\end{itemize}
%%% Note: instance run_bis 200

\begin{figure}[htbp!]
  \centering
  \includegraphics{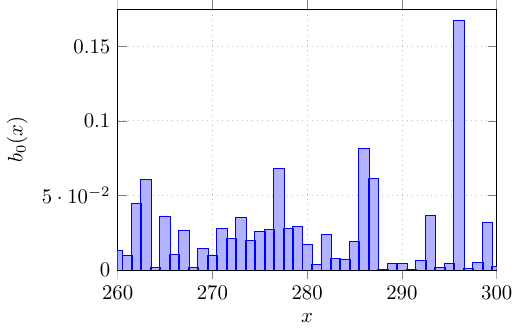}
  \caption{Probability distribution used as the initial belief $\belief_0$ for the
    numerical applications\hfill\label{fig:value_b_0_bathtub}}
\end{figure}

When considering the initial belief $\belief_0$ presented in
Figure~\ref{fig:value_b_0_bathtub} and a ``true'' (unknown) initial state of
$\states_0=290$ (used to simulate the observation process depending on the policy), we
obtain the trajectory of the tank water volume represented in Figure~\ref{fig:bathtub_trajectory_1}.
% \pierre{La courbe ``r\'{e}elle'' en bleu n'est quasiment pas visible.}

Moreover, we have a set of reachable beliefs $\reachablebeliefspace_{\ic{0,100}}$ such that
$\cardinal{\reachablebeliefspace_{\ic{0,100}}} = 64,400$. We therefore do not display
value functions, as they are defined on sets with too large cardinality.

% It is thus difficult to display the
% value function in a $64400$ dimensions space, which is why we will not display those in
% this chapter.

We also made a second numerical application where the observation $\OO$ is changed to:
\begin{itemize}
\item $\OO = \na{1, 6, 11, 51, 101, 151, 201, 251}$
\end{itemize}
% Note: instance run_bis 201

When considering the new observations set and the same initial belief and initial state,
we obtain the trajectory of the tank water volume represented in
Figure~\ref{fig:bathtub_trajectory_2}.

\begin{figure}[htbp]
  \begin{minipage}{0.48\linewidth}
    \includegraphics[scale=0.37]{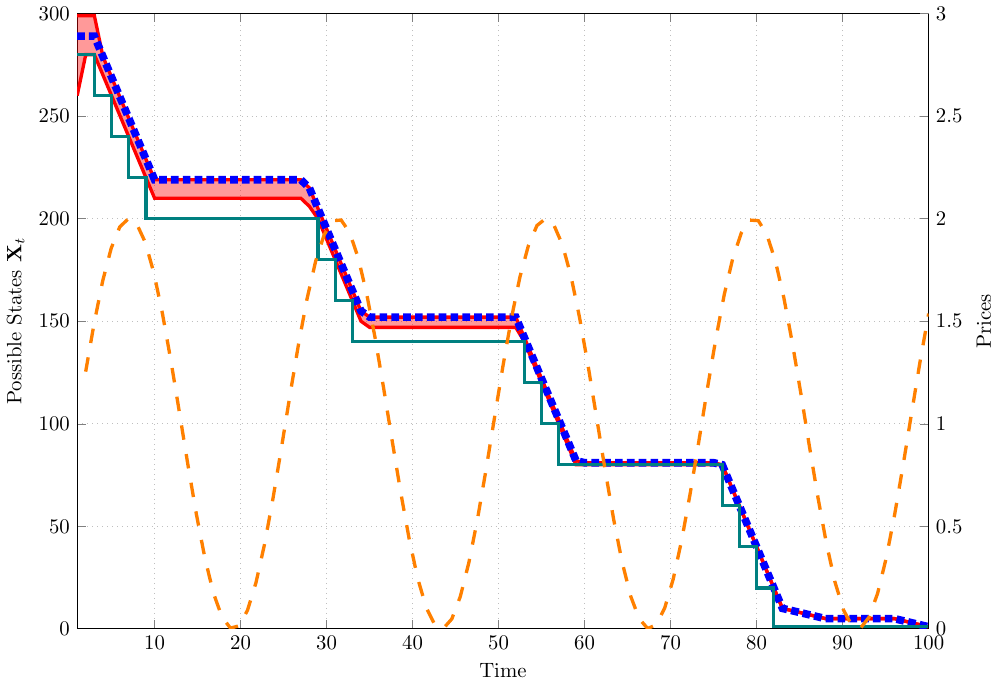}
  \caption{Representation of a trajectory of the volume of water in the tank when applying
    the optimal controls and considering the first set of observations. A vertical slice
    at time $\timeindex$ of the red area represents the support of the belief held at time
    $\timeindex$, the dotted blue curve represents the trajectory of the ``true'' state,
    the piecewise constant green curve is the observation we have access to at time
    $\timeindex$, and the dashed orange curve represents the periodic costs.
    \hfill\label{fig:bathtub_trajectory_1}}
\end{minipage}
\hspace{0.3cm}
\begin{minipage}{0.48\linewidth}
  \includegraphics[scale=0.37]{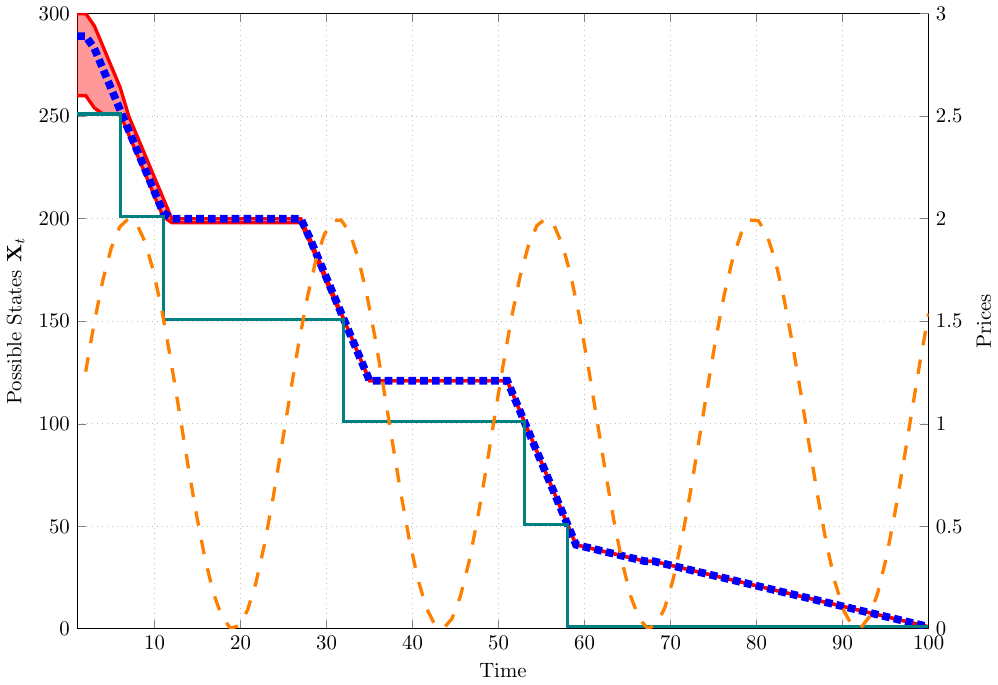}
  \caption{Representation of a trajectory of the volume of water in the tank when applying
    the optimal controls and considering the second set of observations. A vertical slice
    at time $\timeindex$ of the red area represents the support of the belief held at
    time~$\timeindex$, the dotted blue curve represents the trajectory of the ``true''
    state, the piecewise constant green curve is the observation we have access to at
    time~$\timeindex$, and the dashed orange curve represents the periodic costs.
    \hfill\label{fig:bathtub_trajectory_2}}
\end{minipage}
\end{figure}

Figures~\ref{fig:bathtub_trajectory_1} and~\ref{fig:bathtub_trajectory_2} both illustrate
some properties of \dpomdp s.
\begin{enumerate}
\item In both cases, we see that the size of the support of the beliefs decreases
  with time (the thickness of the vertical red slices is nonincreasing). % , illustrating Proposition~\ref{prop:reduction_support_belief}.
\item We remark that such a decrease is due to the observations. Indeed, in
  Problem~\eqref{eq:formulation_dpomdp_tank}, the observation mapping ensures that the
  support of the beliefs must belong to intervals
  $\intervalStates\np{\observer_t}$ when we observe~$\observer_{\timeindex}$.
  % (see Equation~\eqref{eq:def_underline_o_overline_o}).
  Thus, the
  supports of the beliefs are reduced along the limit of those intervals, as is more
  easily seen in Figure~\ref{fig:bathtub_trajectory_2} between
  time~$\timeindex=1$ to~$\timeindex=6$.
  Indeed, at each time step in those periods, we remove some water, and we
  see that the lower part of the support remains at the observation value until time
  $t=7$. At that time, we change observation and we see that the upper bound of the
  support gets just beneath the previous observation, i.e. at $\states=249$.
\item
  We remark that, as could be expected, the optimal policy consists of removing
  water when costs are high, and stopping when costs are low.
\item
  We remark that, despite having fewer observations in the second case, the
  optimal trajectory in the second case reaches a Dirac (``deterministic'') belief (i.e. such that
  $\cardinal{\supp \np{\belief}} = 1$) much sooner in
  Figure~\ref{fig:bathtub_trajectory_2} compared to Figure~\ref{fig:bathtub_trajectory_1}
  (at time $\timeindex=33$ for the second case and time $\timeindex=53$ for the first
  case). Having more observations hence does not guarantee to remove ambiguities at a
  faster rate.
\end{enumerate}
We now present the computation time of the \DPA\ and compare it to
another algorithm, \SARSOP.

\subsubsubsection{Comparison with \SARSOP}
In this paragraph, we focus on the comparison with the algorithm \SARSOP, first introduced
in~\citep{sarsop_kurniawati_2008}. We used the Julia implementation of this algorithm,
with the POMDPs package API. The following results were obtained on a computer equipped
with a Core i7-8665U and 32~GB of memory, using Julia v$1.7.3$, POMDPs v$0.9.3$ and
\SARSOP\ v$0.5.5$.

We must first warn the reader that \SARSOP\ is an algorithm that solves an
infinite horizon \pomdp. We hence reformulate the finite horizon \dpomdp\ as an infinite
time \pomdp\ by extending the state with the time variable. Such reformulation leads to a
much bigger problem in terms of data and size of the state space, which heavily penalizes
\SARSOP. Hence, the reformulation prevents any
fair comparison of computation times. We still present some computation times in
Table~\ref{tab:computation_time_comparison_sarsop_mdpomdp}.

Note that, for each instance where the computation did not stop (i.e. those without a
``$>$'' symbol in the computation time column) due to hitting the memory limit of the
computer, \SARSOP\ and the \DPA\ have found the same values.
% /\mdl{SUPPRIMER , hence \SARSOP\ indeed converged
% toward the optimal solution of Problem~\eqref{eq:formulation_dpomdp_tank}.}

% \cyrille{Ajouter les valeurs pour SARSOP et l'algo 4}

\begin{table}[htbp!]
  \centering
\begin{tabular}{|c|c|c|c|c|c|c|}
  \hline
  $\cardinal{\XX}$ & $\cardinal{\UU}$ & $\cardinal{\OO}$
  & $\cardinal{\supp \np{\belief_0}}$ & $\horizon$ & \SARSOP
  & \DPA\ \\
   & & & & & computation time (s) & computation time (s) \\
  \hline
  $11$ & $2$ & $3$ & $2$ & $20$ & $0.376$ & $0.002$ \\ % 1
  $21$ & $2$ & $5$ & $2$ & $25$ & $0.16$ & $0.003$ \\ % 2
  % $21$ & $2$ & $5$ & $2$ & $25$ & $0.16$ & $0.003$ \\ % 3
  $51$ & $5$ & $5$ & $2$ & $100$ & $24.9$ & $0.20$ \\ % 5
  $51$ & $5$ & $5$ & $4$ & $100$ & $27.2$ & $1.20$ \\ % 6
  $51$ & $5$ & $5$ & $6$ & $100$ & $29.4$ & $3.03$ \\ % 7
  $101$ & $5$ & $5$ & $2$ & $200$ & $359$ & $0.96$ \\ % 4
  $101$ & $5$ & $5$ & $10$ & $200$ & $1930$ & $32.2$ \\ % 8
  $101$ & $10$ & $5$ & $10$ & $200$ & $1069$ & $78.2$ \\ % 9
  $201$ & $5$ & $5$ & $10$ & $200$ & $3506$ & $62.1$ \\ % 10
  $201$ & $10$ & $5$ & $10$ & $200$ & $15618$ & $309$ \\ % 11
  $201$ & $5$ & $5$ & $20$ & $200$ & $3652$ & $225$ \\ % 12
  $201$ & $10$ & $6$ & $20$ & $200$ & $33562$ & $497$ \\ % 14
  $301$ & $5$ & $6$ & $10$ & $200$ & $4638$ & $86.8$ \\ % 13
  $301$ & $10$ & $6$ & $10$ & $300$ & $>38000$  & $762$ \\ % 15
   & &  &  & &($>19217$s of iterations) & \\ % 15
  \hline
\end{tabular}
\caption{Computation time of different instances of both \SARSOP\ and
  the \DPA\
\label{tab:computation_time_comparison_sarsop_mdpomdp}
}
\end{table}

\section{Conclusion}
In this paper, we have presented a subclass of \pomdp s, \Mdpomdp s, which has properties
that contribute to push back the curse of dimensionality for \DP. Indeed, we have shown
that the conditions on the dynamics for \Mdpomdp\ improve the bound on the cardinality of
the set of the reachable beliefs: the bound is reduced from
$\bp{1 + \cardinal{\XX}}^{\cardinal{\supp \np{\belief_0}}}$ (in the case of \dpomdp, see
Theorem~\ref{th:reachable_belief_bounds}) to
% Theorem~\ref{th:exponential_state_space_bound}) to
$2^{\cardinal{\supp \np{\belief_0}}} \cardinal{\XX}$
(Theorem~\ref{th:bound_belief_space_mdpomdp}), as presented in
Table~\ref{tab:recap_bounds_dpomdp_infinite_horizon}.
% and~\ref{tab:recap_bounds_dpomdp_finite_horizon}
This tighter bound guarantees that \DP~algorithms
efficiently solve \Mdpomdp\ problems, especially when considering small supports of the
initial state distributions. Moreover, the bound is tight (see
Proposition~\ref{prop:reaching_the_bound}).

The \Mdpomdp~class is, therefore, an interesting framework for some problems as only a
fraction of the number of beliefs needs to be considered, in comparison with \dpomdp\ or
\pomdp. The \Mdpomdp s are therefore tractable with larger instances than regular \pomdp s
or \dpomdp s.

\begin{table}[htbp!]
  \begin{center}
  \begin{tabular}{|c|c|c|}
    \hline
    Class    & Infinite horizon bound & Finite horizon bound \\
    \hline
    \hline
    \dpomdp   & $\left(1 + \cardinal{\XX} \right)^{\cardinal{\XX}}$ & $\min \bp{\left(1 + \cardinal{\XX} \right)^{\cardinal{\XX}},
     \bp{\cardinal{\UU} \cardinal{\OO} }^{\cardinal{\timeset}}}$\\
    & \citep{littman_thesis} & \\
    \hline
    \hline
    \dpomdp  & $\left(1 + \cardinal{\XX} \right)^{\cardinal{\supp \np{\belief_0}}}$
             & $\min \bp{\left(1 + \cardinal{\XX} \right)^{\cardinal{\supp \np{\belief_0}}},
      1+ \cardinal{\supp\np{\belief_0}} \cardinal{\UU}^{\cardinal{\timeset}}}$ \\
    improved bounds & Theorem~\ref{th:reachable_belief_bounds} & Theorem~\ref{th:reachable_belief_bounds} \\
    % improved bounds & Theorem~\ref{th:exponential_state_space_bound} & Theorem~\ref{th:reachable_belief_bounds} \\
    \hline
    Separated & $1 + \bp{2^{\cardinal{\supp \np{\belief_{0}} } } -
                \cardinal{\supp \np{\belief_0}}} \cardinal{\XX} $
              & $\min \Big( 1 + \bp{2^{\cardinal{\supp \np{\belief_{0}} } } -
                \cardinal{\supp \np{\belief_0}}} \cardinal{\XX}, $ \\
    \dpomdp  & & $\left. 1+ \cardinal{\supp\np{\belief_0}} \cardinal{\UU}^{\cardinal{\timeset}}\right)$\\
    & Theorem~\ref{th:bound_belief_space_mdpomdp} & Corollary~\ref{cor:complexity_bounds_mdpomdp} \\
    \hline
\end{tabular}
\end{center}
\caption{Summary of the bounds depending on the class of problem
  % for infinite horizon problems
}
\label{tab:recap_bounds_dpomdp_infinite_horizon}
\end{table}

\appendix

\section{Appendix}

First, in \S\ref{sect:technical_lemmatas_pushforward}, we present technical lemmata used to
prove bounds on the cardinality of the sets of reachable beliefs. Then, in
\S\ref{sect:complements-det-pomdp}, we present complementary results on \cemeterySeparated\
mappings sets.

\subsection{Technical lemmata}
\label{sect:technical_lemmatas_pushforward}
In this subsection, we present technical lemmata used in the proofs of
Theorem~\ref{th:reachable_belief_bounds}.
We first introduce in \S\ref{subsect:notations_technical_lemmatas}
the notions of forward and backward mappings. Second, in
\S\ref{subsect:tech_lematas_X_right_mappings}, we present properties on the composition and
pushforward measures by those forward and backward mappings. Third, in
\S\ref{subsect:bounds_cardinal_sets_pushfowards}, we present properties on the cardinality
of sets of forward and backward mappings, used notably in the proof of
Theorem~\ref{th:reachable_belief_bounds}.

\subsubsection{Forward and backward mappings}
\label{subsect:notations_technical_lemmatas}

For any subset $\Primal \subset \XX$, we introduce the notion of $\DualP$-forward and
$\DualP$-backward mappings. Given a mapping $\fonctionprimalbis:\PRIMAL\to \DUALP$ and a
subset $\DualP\subset \DUALP$, we define a mapping
$\Mapforward{\fonctionprimalbis}{\DualP}: \overline{\PRIMAL} \to \overline{\DUALP}$
(recall that $\overline{\PRIMAL} = \PRIMAL \cup \na{\cemetery}$,
in~\eqref{eq:def_bar_XX}), called a \emph{$\DualP$-forward mapping}, as follows
\begin{align}
  \Mapforward{\fonctionprimalbis}{\DualP}:
  % \overline{\primal} \in \overline{\PRIMAL}
  \primal \in \overline{\PRIMAL}
  & \mapsto
    \begin{cases}
      \fonctionprimalbis(\primal)
      % & \text{if}\quad \overline{\primal}\in \PRIMAL \quad\text{and}\quad
        % \fonctionprimalbis(\overline{\primal})\in \DualP
      & \text{if}\quad \primal \in \PRIMAL \quad\text{and}\quad
        \fonctionprimalbis(\primal)\in \DualP
        \eqfinv
      \\
      \cemetery
      % & \text{if}\quad \overline{\primal}=\cemetery\quad\text{or}\quad
      %   \fonctionprimalbis(\overline{\primal})\not\in \DualP
      & \text{if}\quad \primal=\cemetery\quad\text{or}\quad
        \fonctionprimalbis(\primal)\not\in \DualP
        \eqfinp
    \end{cases}
    \label{eq:definition_fonctionprimal_dual_forward}
\end{align}
We call $\Mapforward{\fonctionprimalbis}{\DualP}:\overline{\PRIMAL} \to \overline{\DUALP}$
an $\DualP$-forward mapping as the subset $\DualP$ appearing in Equation~\eqref{eq:definition_fonctionprimal_dual_forward} is a subset of the codomain of $\fonctionprimalbis$
%we have $\Mapforward{\fonctionprimalbis}{\DualP}\np{\barXX} \subset X \cup \na{\cemetery}$. An
%$\DualP$-forward mapping imposes a constraint on the codomain (set of destinations): we
%only keep the values that belong to $\DualP$, whereas the others are sent to $\cemetery$.
%The set $\DualP$ is thus a subset of the codomain of $\fonctionprimalbis$.

We also introduce the \emph{$\DualP$-backward mapping}
$\Mapbackward{\fonctionprimalbis}{\DualP}:\overline{\PRIMAL} \to \overline{\DUALP}$,
defined by
\begin{align}
  \Mapbackward{\fonctionprimalbis}{\Primal}:
  % \overline{\primal} \in \overline{\PRIMAL}
  \primal \in \overline{\PRIMAL}
  & \mapsto
    \begin{cases}
      % \fonctionprimalbis(\overline{\primal})
      % & \text{if}\quad \overline{\primal} \in \DualP
      \fonctionprimalbis(\primal)
      & \text{if}\quad \primal \in \DualP
        \eqfinv
      \\
      \cemetery
      & \text{otherwise.}
        % \eqfinp
    \end{cases}
    \label{eq:definition_fonctionprimal_dual_backward}
\end{align}
We call
$\Mapbackward{\fonctionprimalbis}{\DualP}:\overline{\PRIMAL} \to \overline{\DUALP}$ an
$\DualP$-backward mapping as
the subset $\DualP$ appearing in Equation~\eqref{eq:definition_fonctionprimal_dual_backward} is a subset of the domain of $\fonctionprimalbis$.

It is straightforward to check that we have
\begin{subequations}
  \begin{align}
    \forall \DualP \subset \DUALP \eqsepv
    &     \Mapforward{\fonctionprimalbis}{\DualP} =
      \Mapbackward{\fonctionprimalbis}{\fonctionprimalbis^{-1}(\Primal)}
      \eqfinv
      \label{eq:forward-to-backward}
    \\
    % Moreover, we also have
    \forall \DualP \subset \DUALP \eqsepv
    & \Mapforward{\fonctionprimalbis}{\Primal}=
      \Mapforward{\fonctionprimalbis}{\Primal \cap \image\np{\fonctionprimalbis}}
      \eqfinv
      \label{eq:forward_equal_forward_cap_image}
  \end{align}
\end{subequations}
where, for any mapping $\fonctionprimalbis$, $\image\np{\fonctionprimalbis}$ is
the image of mapping $\fonctionprimalbis$, that is
$\image\np{\fonctionprimalbis} = \fonctionprimalbis\np{\XX}$. Using~\eqref{eq:forward-to-backward}, we obtain that a forward mapping
can be rewritten as a backward mapping.
The reverse is not always true as illustrated by the following example.
Consider $\PRIMAL=\na{\primal_1, \primal_2}$, the constant mapping $\fonctionprimalbis: \na{\primal_1, \primal_2}\mapsto \primal_1$ and
$\Primal=\na{x_1}$. Then, $\Mapbackward{\fonctionprimalbis}{\DualP}$ is given by $\Mapbackward{\fonctionprimalbis}{\DualP}\np{\primal_1}=\primal_1$ and
$\Mapbackward{\fonctionprimalbis}{\DualP}\np{\primal_2}=\cemetery$ and it cannot be equal to $\Mapforward{\fonctionprimalbis}{\DualP'}$ for any
$\DualP'\subset \PRIMAL$. Indeed, if it were the case, we would have $\Mapforward{\fonctionprimalbis}{\DualP'}\np{\primal_1}=
\Mapbackward{\fonctionprimalbis}{\DualP}\np{\primal_1}=\primal_1$ and this would imply
$\primal_1 \in \DualP'$. Thus we would also have $\Mapforward{\fonctionprimalbis}{\DualP'}\np{\primal_2}=\primal_1$ as $\fonctionprimalbis(\primal_2)=\primal_1 \in
\DualP'$ and finally we would obtain
$\Mapforward{\fonctionprimalbis}{\DualP'}\np{\primal_2}=\primal_1\not= \cemetery
= \Mapbackward{\fonctionprimalbis}{\DualP}\np{\primal_2}$, hence leading to a contradiction.

\subsubsection{Results on pushforward measures by forward and backward mappings sets}
\label{subsect:tech_lematas_X_right_mappings}

We now present properties of the composition of pushforward measures by forward and
backward mappings.

\begin{definition}
  \label{def:BackwardAndForwardMappingSets}
  Let $\mathbb{M} \subset \Mappings{\PRIMAL}{\PRIMAL}$ be a subset of self mappings on the
  set $\PRIMAL$. We say that $\GG \subset \Mappings{\overline\PRIMAL}{\overline\PRIMAL}$
  is an \emph{\MBackwardMappings\ set} (resp. an \emph{\MForwardMappings\ set}) if it
  satisfies the following property
  \begin{subequations}
    \begin{align}
      \GG \subset
      \bset{ \Mapbackward{\fonctionprimalbis}{\DualP}
        % \in  \Mappings{\overline\PRIMAL}{\overline\PRIMAL}
      }{\fonctionprimalbis \in \mathbb{M} \text{ and } \DualP \subset \PRIMAL}
      \eqsepv
           \label{MBackwardMappingsDef_a}
      \\
      \big( \text{resp.}\quad
      \GG \subset
      \bset{ \Mapforward{\fonctionprimalbis}{\DualP}
        % \in  \Mappings{\overline\PRIMAL}{\overline\PRIMAL}
      }{\fonctionprimalbis \in \mathbb{M} \text{ and } \DualP \subset \PRIMAL} \big)
      \eqfinv
    \end{align}
      \label{MBackwardMappingsDef}
  \end{subequations}
  where $\Mapbackward{\fonctionprimalbis}{\DualP}$ (resp.
  $\Mapforward{\fonctionprimalbis}{\DualP}$) is defined in
  Equation~\eqref{eq:definition_fonctionprimal_dual_backward} (resp.
  Equation~\eqref{eq:definition_fonctionprimal_dual_forward}). When
  $\mathbb{M}= \Mappings{\PRIMAL}{\PRIMAL}$, an \MBackwardMappings\ set (resp. an
  \MForwardMappings\ set) is just named an \emph{\BackwardMappings\ set} (resp. an
  \emph{\ForwardMappings\ set}).
\end{definition}

We obtain the following properties.
\begin{itemize}
\item If $\GG$ is an \MForwardMappings\ set, then $\GG$ is an \MBackwardMappings\ set (using
  Equality~\eqref{eq:forward-to-backward}).
\item \BackwardMappings\ sets are stable by composition, as we easily obtain that
  \begin{equation}
    \Mapbackward{\fonctionprimalbis'}{\DualP'} \circ
    \Mapbackward{\fonctionprimalbis}{\DualP}
    = \Mapbackward{(\fonctionprimalbis'\circ \fonctionprimalbis)}
    {\Primal \cap \fonctionprimalbis^{-1}(\Primal')}
    \eqfinp
    \label{eq:stable_composition_backward_mappings}
  \end{equation}
\item Let $\GG$ be an \BackwardMappings\ set and consider, for any
  $\Primal \subset \PRIMAL$, the subset $\BackwardSet{\Primal}$ of~$\GG$ defined by
  \begin{equation}
    \BackwardSet{\Primal} =
    \bset{g \in \GG}{ \exists \fonctionprimalbis \in \Mappings{\XX}{\XX}, g = \Mapbackward{\fonctionprimalbis}{\DualP}}
    \eqfinp
  \end{equation}
  Then, for any belief $\belief_0 \in \Delta\np{\PRIMAL}$, we have
  \begin{equation}
    \bp{\Renormalization\circ \np{\BackwardSet{\Primal\cap \supp\np{\belief_0}}}_{\star}}
    \np{\belief_0}
    =
    \bp{ \Renormalization\circ \np{\BackwardSet{\Primal}}_{\star}} \np{\belief_0}
    \eqfinp
    \label{eq:Backward-support-b0}
  \end{equation}
  Equation~\eqref{eq:Backward-support-b0} is a consequence of the following
  Lemma~\ref{lem:ext_pushforward_renorm_result}. Indeed,
  % assuming   Lemma~\ref{lem:ext_pushforward_renorm_result},
  the expression of
  $\bp{ \Renormalization\circ \np{\BackwardSet{\Primal}}_{\star}} \np{\belief_0}$ given by
  Equation~\eqref{eq:R-circ-backward} only depends on the restriction of the measure
  $\belief_0$ to the subset $\Primal$ -- which coincides with the restriction of the measure
  $\belief_0$ to the subset $\Primal\cap\supp\np{\belief_0}$ -- as the measure $\belief_0$
  is null outside its support.
\end{itemize}

\begin{lemma}
  \label{lem:ext_pushforward_renorm_result}
  Let $\Primal$ be a subset of $\PRIMAL$.
  The mappings $\Renormalization\circ \np{ \Mapbackward{\fonctionprimalbis}{\DualP}}_{\star}$
  and $\Renormalization\circ \np{ \Mapforward{\fonctionprimalbis}{\DualP}}_{\star}$
  in $\Mappings{\Delta(\overline{\PRIMAL})}{\BB}$ ---
  where the pushforward measure is defined in Equation~\eqref{eq:def_push_forward},
  and the mapping $\Renormalization$ is defined in
  Equation~\eqref{eq:definition_renormalisation} ---
  % Equation~\eqref{eq:definition_renormalisation_bis},
  have the following expressions: for all $\measureOnBarXX\in \Delta(\overline{\PRIMAL})$,
  \begin{subequations}
    \begin{align}
      \bp{\Renormalization\circ \np{\Mapforward{\fonctionprimalbis}{\DualP}}_{\star}}
      \np{\measureOnBarXX}
      &=
        \begin{cases}
          \Bc{\overline{\dualP} \in {\overline{\DUALP}}
          \mapsto
          \frac{\displaystyle{\measureOnBarXX}\bp{
          \fonctionprimalbis^{-1}(\overline{\dualP})}\findi{\DualP}\np{\overline{\dualP}}}
          {\displaystyle{\measureOnBarXX}\bp{ \fonctionprimalbis^{-1}(\DualP)}}}
          & \text{if}\quad {\measureOnBarXX}\bp{ \fonctionprimalbis^{-1}(\DualP)}\not=0
            \eqfinv
          \\
          \delta_{\cemetery}
          & \text{otherwise,}%\quad {\measureOnBarXX}\bp{ \fonctionprimalbis^{-1}(\DualP)}=0
            % \eqfinv
        \end{cases}
        \label{eq:R-circ-forward}
        \intertext{and} %\\
      \bp{\Renormalization\circ \np{\Mapbackward{\fonctionprimalbis}{\DualP}}_{\star}}\np{
      \measureOnBarXX}
      &=
        \begin{cases}
          \Bc{\overline{\dualP} \in {\overline{\DUALP}}
          \mapsto
          \frac{\displaystyle{\measureOnBarXX}\bp{ \fonctionprimalbis^{-1}(
          \overline{\dualP})\cap \Primal}}
          {\displaystyle{\measureOnBarXX}\bp{ \fonctionprimalbis^{-1}(\DUALP)\cap \Primal}}}
          & \text{if}\quad {\measureOnBarXX}\bp{
            \fonctionprimalbis^{-1}(\DUALP)\cap \Primal}\not=0
            \eqfinv
          \\
          \delta_{\cemetery}
          & \text{otherwise.}
            % \eqfinp
        \end{cases}
        \label{eq:R-circ-backward}
    \end{align}
  \end{subequations}
  \label{eq:R-circ-forward-backward}
\end{lemma}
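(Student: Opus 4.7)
The plan is to carry out a direct case-by-case computation, starting from the explicit definitions of the pushforward measure~\eqref{eq:def_push_forward}, of the forward/backward mappings~\eqref{eq:definition_fonctionprimal_dual_forward}--\eqref{eq:definition_fonctionprimal_dual_backward}, and of the renormalization mapping~\eqref{eq:definition_renormalisation}. The two identities are independent; moreover, by Equation~\eqref{eq:forward-to-backward} one could in principle deduce the forward case from the backward case, but I would prefer to treat them in parallel to keep the computations transparent.

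First, for the forward case, I would fix $\measureOnBarXX \in \Delta(\overline{\PRIMAL})$ and, for an arbitrary $\overline{\dualP} \in \overline{\DUALP}$, describe the preimage $(\Mapforward{\fonctionprimalbis}{\DualP})^{-1}(\overline{\dualP})$. Splitting cases: if $\overline{\dualP} \in \DualP$, the preimage reduces to $\fonctionprimalbis^{-1}(\overline{\dualP}) \subset \PRIMAL$; if $\overline{\dualP} \in \DUALP \setminus \DualP$, the preimage is empty; and if $\overline{\dualP} = \cemetery$, the preimage contains $\cemetery$ together with all points of $\PRIMAL$ whose image avoids $\DualP$. Summing these contributions via~\eqref{eq:def_push_forward} yields $(\Mapforward{\fonctionprimalbis}{\DualP})_{\star}\measureOnBarXX(\overline{\dualP}) = \measureOnBarXX(\fonctionprimalbis^{-1}(\overline{\dualP}))\findi{\DualP}(\overline{\dualP})$ on $\DUALP$, and the complementary mass concentrated on $\cemetery$. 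The total mass carried on $\DUALP$ is therefore $\measureOnBarXX(\fonctionprimalbis^{-1}(\DualP))$, and applying~\eqref{eq:definition_renormalisation} gives Formula~\eqref{eq:R-circ-forward}, with the $\delta_{\cemetery}$ branch corresponding exactly to the case $\measureOnBarXX(\fonctionprimalbis^{-1}(\DualP))=0$.

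Second, for the backward case, the same method applies: for $\overline{\dualP} \in \DUALP$ the preimage $(\Mapbackward{\fonctionprimalbis}{\Primal})^{-1}(\overline{\dualP})$ equals $\fonctionprimalbis^{-1}(\overline{\dualP}) \cap \Primal$, while the preimage of $\cemetery$ picks up everything outside $\Primal$. Summing yields the pushforward with total mass $\measureOnBarXX(\fonctionprimalbis^{-1}(\DUALP) \cap \Primal)$ on $\DUALP$, and applying $\Renormalization$ produces Formula~\eqref{eq:R-circ-backward}, again with the appropriate $\delta_{\cemetery}$ fallback when this mass vanishes.

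I do not anticipate a genuine obstacle: the lemma is essentially bookkeeping. The only subtlety to watch for is the careful handling of the cemetery point in both the domain and the codomain (so that the $\overline{\PRIMAL}$--valued measure $\measureOnBarXX$ and the $\overline{\DUALP}$--valued pushforward are correctly matched), and the clean separation of the two regimes in the definition of $\Renormalization$. A clean presentation will make clear that the identity $\measureOnBarXX(\fonctionprimalbis^{-1}(\overline{\dualP}))\findi{\DualP}(\overline{\dualP}) = \measureOnBarXX(\fonctionprimalbis^{-1}(\overline{\dualP} \cap \DualP))$ (in the forward case) is the source of the indicator $\findi{\DualP}$ in~\eqref{eq:R-circ-forward}, while the intersection with $\Primal$ in~\eqref{eq:R-circ-backward} comes directly from the domain restriction in~\eqref{eq:definition_fonctionprimal_dual_backward}.
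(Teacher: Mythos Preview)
Your proposal is correct and follows essentially the same approach as the paper: compute the preimages of $\Mapforward{\fonctionprimalbis}{\DualP}$ (resp.\ $\Mapbackward{\fonctionprimalbis}{\Primal}$) by a case split over $\overline{\dualP}\in\DualP$, $\overline{\dualP}\in\DUALP\setminus\DualP$, and $\overline{\dualP}=\cemetery$, read off the pushforward and its total mass on $\DUALP$, and then apply the definition of $\Renormalization$. The paper only spells out the forward case and leaves the backward one to the reader, whereas you outline both; otherwise the arguments coincide.
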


\begin{proof}
  For any probability measure $\measureOnBarXX$ on the finite set $\overline{\PRIMAL}$, it
  is straightforward, using the definition of pushforward measure in
  Equation~\eqref{eq:def_push_forward}, to obtain that the pushforward of the measure
  $\measureOnBarXX$ through the mapping~$\Mapforward{\fonctionprimalbis}{\DualP}$, as
  defined in Equation~\eqref{eq:definition_fonctionprimal_dual_forward}, is given by
  \begin{align}
    \np{\Mapforward{\fonctionprimalbis}{\DualP}}_{\star}\measureOnBarXX
    : \overline{\DUALP}
    &\to \RR_{+}
      \nonumber \\
    \overline{y}
    &\mapsto
      \measureOnBarXX\bp{ \np{\Mapforward{\fonctionprimalbis}{\DualP}}^{-1}(\overline{y})}
      =
      \begin{cases}
        \measureOnBarXX\bp{ \fonctionprimalbis^{-1}(\overline{y})}
        & \text{if}\quad \overline{y}\in \DualP
          \eqfinv
        \\
        \Bp{1- \measureOnBarXX\bp{ \fonctionprimalbis^{-1}(\DualP)}}
        & \text{if}\quad \overline{y}=\cemetery
          \eqfinv
        \\
        0
        & \text{if}\quad \overline{y}\not=\cemetery \text{ and } \overline{y}\not\in \DualP
          \eqfinp
      \end{cases}
      \label{eq:pushforward-fY}
  \end{align}
  Thus, we obtain that
  \begin{equation}
    \forall \dualP \in \DUALP \eqsepv
    \bp{\np{\Mapforward{\fonctionprimalbis}{\DualP}}_{\star}
      \measureOnBarXX}_{\vert_{\DUALP}}
    (\dualP)
    = \measureOnBarXX\bp{ \fonctionprimalbis^{-1}({\dualP})}\findi{\DualP}{({\dualP})}
    \eqfinv
  \end{equation}
  and that
  \begin{equation}
    \bp{\np{\Mapforward{\fonctionprimalbis}{\DualP}}_{\star}\measureOnBarXX}(\DUALP)
    = \sum_{\dualP \in \DUALP} \measureOnBarXX\bp{ \fonctionprimalbis^{-1}(\dualP)}
    \findi{\DualP}{({\dualP})}
    = \measureOnBarXX\bp{ \fonctionprimalbis^{-1}(\DualP)}
    \eqfinp
  \end{equation}
  Hence, using the definition of $\Renormalization$ in
  Equation~\eqref{eq:definition_renormalisation}, the result follows from
  % Equation~\eqref{eq:definition_renormalisation_bis}, the result follows from
  Equation~\eqref{eq:R-circ-forward}. The proof of Equation~\eqref{eq:R-circ-backward} is
  very similar and left to the reader.
\end{proof}

The composition of self-mappings of the form
$\Renormalization\circ \np{\Mapforward{\fonctionprimalbis}{\DualP}}_{\star}$ can also be written
without resorting to multiple renormalizations. Instead, we only need to renormalize the
composition of the pushforward measures, as shown below.

\begin{lemma}
  \label{lem:composition_pushforward_equality}
  Assume that $\fonctionprimalbis$ and $\fonctionprimalbis'$ are self-mappings on the
  finite set $\PRIMAL$. Then, for any subsets $\Primal$ and $\Primal'$ of
  $\PRIMAL$, we have the following composition equalities
  \begin{subequations}
    \begin{align}
      \Renormalization\circ \np{\Mapforward{\fonctionprimalbis}{\DualP}}_{\star}
      \circ \Renormalization\circ \np{\Mapforward{\fonctionprimalbis'}{\DualP'}}_{\star}
      &=
        \Renormalization\circ \np{\Mapforward{\fonctionprimalbis}{\DualP} \circ
        \Mapforward{\fonctionprimalbis'}{\DualP'}}_{\star}
        \eqfinv
        \label{eq:lem_composition_pushforward}
      \\
      \Renormalization\circ \np{\Mapbackward{\fonctionprimalbis}{\DualP}}_{\star}
      \circ \Renormalization\circ \np{\Mapbackward{\fonctionprimalbis'}{\DualP'}}_{\star}
      & =
        \Renormalization\circ \np{\Mapbackward{\fonctionprimalbis}{\DualP} \circ
        \Mapbackward{\fonctionprimalbis'}{\DualP'}}_{\star}
        \eqfinp
        \label{eq:lem_composition_pushbackward}
    \end{align}
    \label{eq:lem_composition_pushforward_pushbackward}
  \end{subequations}
\end{lemma}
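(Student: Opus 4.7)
The plan is to reduce both identities to two structural facts: pushforward is linear in the measure and functorial in the mapping (so $(g \circ h)_{\star} = g_{\star} \circ h_{\star}$); and both $\Mapforward{\fonctionprimalbis}{\DualP}$ and $\Mapbackward{\fonctionprimalbis}{\DualP}$ are self-mappings on $\overline{\PRIMAL}$ fixing the cemetery, $g(\cemetery)=\cemetery$, which in particular implies $g^{-1}(\PRIMAL)\subset \PRIMAL$ and $g_{\star}\delta_{\cemetery}=\delta_{\cemetery}$. The key auxiliary observation, direct from Equation~\eqref{eq:definition_renormalisation}, is that ${\cal R}$ depends only on the restriction of its input to $\PRIMAL$ and is invariant under multiplication by a positive scalar: ${\cal R}(\alpha\nu)={\cal R}(\nu)$ for $\alpha>0$, with ${\cal R}(\nu)=\delta_{\cemetery}$ iff $\nu(\PRIMAL)=0$. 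These three ingredients will allow the intermediate ${\cal R}$ on the left-hand side to be absorbed into the outer ${\cal R}$.

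Because the argument uses the mappings only through their cemetery-preserving property, Equations~\eqref{eq:lem_composition_pushforward} and~\eqref{eq:lem_composition_pushbackward} are treated in parallel; I present the forward case. Fix $\measureOnBarXX\in \Delta(\overline{\PRIMAL})$, and set $g=\Mapforward{\fonctionprimalbis}{\DualP}$ and $h=\Mapforward{\fonctionprimalbis'}{\DualP'}$. I would then case-split on whether $h_{\star}\measureOnBarXX(\PRIMAL)=0$. In this degenerate case, Lemma~\ref{lem:ext_pushforward_renorm_result} gives ${\cal R}(h_{\star}\measureOnBarXX)=\delta_{\cemetery}$, and $g_{\star}\delta_{\cemetery}=\delta_{\cemetery}$, so the left-hand side of~\eqref{eq:lem_composition_pushforward} equals $\delta_{\cemetery}$. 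For the right-hand side, functoriality yields $(g\circ h)_{\star}\measureOnBarXX(\PRIMAL)=h_{\star}\measureOnBarXX\bp{g^{-1}(\PRIMAL)}\leq h_{\star}\measureOnBarXX(\PRIMAL)=0$, so it too collapses to $\delta_{\cemetery}$.

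In the non-degenerate case, set $c=h_{\star}\measureOnBarXX(\PRIMAL)>0$, so that ${\cal R}(h_{\star}\measureOnBarXX)=(h_{\star}\measureOnBarXX)_{\vert_{\PRIMAL}}/c$. Decomposing $h_{\star}\measureOnBarXX=(h_{\star}\measureOnBarXX)_{\vert_{\PRIMAL}}+h_{\star}\measureOnBarXX(\cemetery)\,\delta_{\cemetery}$ and applying linearity of $g_{\star}$ together with $g_{\star}\delta_{\cemetery}=\delta_{\cemetery}$, I obtain
\begin{equation*}
g_{\star}(h_{\star}\measureOnBarXX)=g_{\star}\bp{(h_{\star}\measureOnBarXX)_{\vert_{\PRIMAL}}}+h_{\star}\measureOnBarXX(\cemetery)\,\delta_{\cemetery}.
\end{equation*}
Hence $g_{\star}\bp{(h_{\star}\measureOnBarXX)_{\vert_{\PRIMAL}}}$ and $(g\circ h)_{\star}\measureOnBarXX=g_{\star}(h_{\star}\measureOnBarXX)$ coincide on $\PRIMAL$ and differ at most by a mass at $\cemetery$. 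Combined with the scaling by $1/c$ coming from the inner renormalization, both $\mathcal{R}\circ g_{\star}\circ \mathcal{R}\circ h_{\star}(\measureOnBarXX)$ and $\mathcal{R}\circ (g\circ h)_{\star}(\measureOnBarXX)$ are obtained by applying ${\cal R}$ to two measures whose restrictions to $\PRIMAL$ are proportional by a positive factor, so the scale-invariance and restriction-only property of ${\cal R}$ yield equality and prove~\eqref{eq:lem_composition_pushforward}. The identical argument, using only that backward mappings also fix $\cemetery$, establishes~\eqref{eq:lem_composition_pushbackward}.

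The main obstacle is purely bookkeeping: consistently separating $\nu$ from $\nu_{\vert_{\PRIMAL}}$ and $\nu(\cemetery)\,\delta_{\cemetery}$, tracking the positive scaling factor $1/c$, and properly handling the degenerate case. No ingredient is required beyond Lemma~\ref{lem:ext_pushforward_renorm_result}, the linearity and functoriality of the pushforward, and direct inspection of the definition of ${\cal R}$.
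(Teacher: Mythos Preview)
Your proposal is correct and follows essentially the same route as the paper: a case split on whether $h_{\star}\measureOnBarXX(\PRIMAL)=0$, combined with functoriality of the pushforward, scale-invariance of ${\cal R}$ (the paper phrases this as ``0-positively homogeneous''), and the fact that ${\cal R}$ depends only on the restriction to $\PRIMAL$. Your presentation is marginally cleaner in that you isolate the cemetery-fixing property $g(\cemetery)=\cemetery$ as the sole structural requirement, which lets you treat the forward and backward cases uniformly rather than saying the second ``follows the same lines''; and in the degenerate case you directly show $(g\circ h)_{\star}\measureOnBarXX(\PRIMAL)=0$ via $g^{-1}(\PRIMAL)\subset\PRIMAL$, whereas the paper instead observes $h_{\star}\measureOnBarXX=\delta_{\cemetery}$ and substitutes back.
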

\begin{proof}
  We just prove Equation~\eqref{eq:lem_composition_pushforward} as the proof follows the
  same lines for Equation~\eqref{eq:lem_composition_pushbackward}. As a preliminary, we
  remark that the mapping
  $\Renormalization\circ \np{\Mapforward{\fonctionprimalbis}{\DualP}}_{\star}$ is defined on the
  nonnegative measures on the set $\overline{\PRIMAL}$ and not just on probability
  measures. Now, given $\mu \in \Delta\np{\overline{\PRIMAL}}$, we consider the
  nonnegative measure $\mu'=(\mu_{\vert_{\PRIMAL}},0)$. The two nonnegative measures $\mu$
  and $\mu'$ coincide on the set $\PRIMAL$. Thus using the expression of
  $\Renormalization\circ \np{\Mapforward{\fonctionprimalbis}{\DualP}}_{\star}$ in
  Equation~\eqref{eq:R-circ-forward} and the fact that $\Primal \subset \PRIMAL$, we
  obtain that
  $\Renormalization\circ \np{\Mapforward{\fonctionprimalbis}{\DualP}}_{\star}(\mu) = \Renormalization\circ
  \np{\Mapforward{\fonctionprimalbis}{\DualP}}_{\star} (\mu_{\vert_{\PRIMAL}},0)$.

  Now, let $\measureOnBarXX \in \Delta\np{\overline{\PRIMAL}}$ be given. We denote by
  $\measureOnBarXXprime\in \Delta\np{\overline{\PRIMAL}}$ the probability measure
  $\measureOnBarXXprime = \np{\Mapforward{\fonctionprimalbis'}{\DualP'}}_{\star}
  \measureOnBarXX$. We consider two cases: either $\measureOnBarXXprime(\PRIMAL) \neq 0$,
  or $\measureOnBarXXprime(\PRIMAL) = 0$.

  \textbf{First case}. We assume that $\measureOnBarXXprime(\PRIMAL) \neq 0$. Then,
  we successively have
  \begin{align*}
    \Renormalization\circ \np{\Mapforward{\fonctionprimalbis}{\DualP}}_{\star}
    % &
      \circ \Renormalization\circ \np{\Mapforward{\fonctionprimalbis'}{\DualP'}}_{\star} \measureOnBarXX
    % \\
    & =
      \Renormalization\circ \np{\Mapforward{\fonctionprimalbis}{\DualP}}_{\star} \circ \Renormalization
      (\measureOnBarXXprime)
      \tag{by replacing $\np{\Mapforward{\fonctionprimalbis'}{\DualP'}}_{\star} \measureOnBarXX$ by
      $\measureOnBarXXprime$}
    \\
    &=
      \Renormalization\circ \np{\Mapforward{\fonctionprimalbis}{\DualP}}_{\star}
      \bp{\frac{1}{\measureOnBarXXprime(\PRIMAL)} \measureOnBarXXprime_{\vert_{\PRIMAL}}, 0}
      % \tag{using $\Renormalization$ definition in~\eqref{eq:definition_renormalisation_bis},
      \tag{using $\Renormalization$ definition in~\eqref{eq:definition_renormalisation},
      with $\measureOnBarXXprime(\PRIMAL) \neq 0$}
    \\
    &=
      \Renormalization\circ \np{\Mapforward{\fonctionprimalbis}{\DualP}}_{\star}
      \bp{\frac{1}{\measureOnBarXXprime(\PRIMAL)}\np{\measureOnBarXXprime_{\vert_{\PRIMAL}}, 0}}
      \tag{factorizing by $\frac{1}{\measureOnBarXXprime\np{\overline{\PRIMAL}}}$}
    \\
    &=
      \Renormalization \Bp{\frac{1}{\measureOnBarXXprime(\PRIMAL)}
      \np{\Mapforward{\fonctionprimalbis}{\DualP}}_{\star}
      \bp{\measureOnBarXXprime_{\vert_{\PRIMAL}}, 0}}
      \tag{as $\np{\Mapforward{\fonctionprimalbis}{\DualP}}_{\star}$ is 1-positively
      homogeneous}
    \\
    % \intertext{} % \\  use intertext to enable pagebreak
    &=
      \Renormalization\bp{\np{\Mapforward{\fonctionprimalbis}{\DualP}}_{\star}
      \bp{\measureOnBarXXprime_{\vert_{\PRIMAL}}, 0}}
      \tag{as $\Renormalization$ is 0-positively homogeneous}
    \\
    % &=
    % \Renormalization\bp{\np{\Mapforward{\fonctionprimalbis}{\DualP}}_{\star}
    % \bp{\measureOnBarXXprime_{\vert_{\PRIMAL}},\measureOnBarXXprime\np{\cemetery}}}
    % \tag{using the preliminary part}\\
    &=
      \Renormalization\bp{\np{\Mapforward{\fonctionprimalbis}{\DualP}}_{\star}
      \np{\measureOnBarXXprime}}
      % \tag{As $\bp{\measureOnBarXXprime_{\vert_{\PRIMAL}},\measureOnBarXXprime\np{\cemetery}} =  \measureOnBarXXprime$}
      \tag{using the preliminary part}
    \\
    &=
      \Renormalization \circ \np{\Mapforward{\fonctionprimalbis}{\DualP}}_{\star} \circ
      \np{\Mapforward{\fonctionprimalbis'}{\DualP'}}_{\star} \measureOnBarXX
      \tag{as $\measureOnBarXXprime =  \np{\Mapforward{\fonctionprimalbis'}{\DualP'}}_{\star}
      \measureOnBarXX$}
    \\
    &=
      \Renormalization\circ \np{{\Mapforward{\fonctionprimalbis}{\DualP}}\circ
      \Mapforward{\fonctionprimalbis'}{\DualP'}}_{\star}
      (\measureOnBarXX) 
  \end{align*}
  as \( \np{\Mapforward{\fonctionprimalbis}{\DualP}}_{\star} \circ \np{\Mapforward{\fonctionprimalbis'}{\DualP'}}_{\star}\)
  \(= \np{\Mapforward{\fonctionprimalbis}{\DualP} \circ \Mapforward{\fonctionprimalbis'}{\DualP'}}_{\star} \)
  by definition~\eqref{eq:def_push_forward} of a pushforward measure.

  \textbf{Second case}. We assume that $\measureOnBarXXprime(\PRIMAL)=0$. Then, we have
  that $\measureOnBarXXprime= \delta_{\cemetery}$ as
  $\measureOnBarXXprime \in \Delta\np{\overline{\PRIMAL}}$, and we obtain
  \begin{align*}
    \Renormalization\circ \np{\Mapforward{\fonctionprimalbis}{\DualP}}_{\star}
    \circ \Renormalization\circ \np{\Mapforward{\fonctionprimalbis'}{\DualP'}}_{\star} \measureOnBarXX
    & =
      \Renormalization\circ \np{\Mapforward{\fonctionprimalbis}{\DualP}}_{\star} \circ \Renormalization
      (\delta_{\cemetery})
      \tag{by replacing $\np{\Mapforward{\fonctionprimalbis'}{\DualP'}}_{\star} \measureOnBarXX$ by
      $\measureOnBarXXprime=\delta_{\cemetery}$}
    \\
    & =
      \Renormalization\circ \np{\Mapforward{\fonctionprimalbis}{\DualP}}_{\star}
      (\delta_{\cemetery})
      \tag{as $\Renormalization (\delta_{\cemetery})= \delta_{\cemetery}$ }
    \\
    &= \Renormalization\circ \np{\Mapforward{\fonctionprimalbis}{\DualP}}_{\star}
      \circ \np{\Mapforward{\fonctionprimalbis'}{\DualP'}}_{\star} \measureOnBarXX
      \tag{by replacing $\delta_{\cemetery}=\measureOnBarXXprime$ by
      $\np{\Mapforward{\fonctionprimalbis'}{\DualP'}}_{\star}\measureOnBarXX$}
    \\
    &=
      \Renormalization\circ \np{{\Mapforward{\fonctionprimalbis}{\DualP}}\circ
      \Mapforward{\fonctionprimalbis'}{\DualP'}}_{\star}
      (\measureOnBarXX)
      \eqfinp
  \end{align*}
  Hence, in both cases, we obtain Equation~\eqref{eq:lem_composition_pushforward}.
\end{proof}

Now that we have exposed technical lemmata on the composition and renormalization of
\ForwardMappings\ and \BackwardMappings, we present lemmata on the cardinality of sets of
pushforward measures, notably the cardinality of pushforward measures by \ForwardMappings\
and \BackwardMappings.

\subsubsection{Results on the cardinality of sets of pushforward measures}
\label{subsect:bounds_cardinal_sets_pushfowards}

First, we bound the cardinality of the set of pushforward of a given nonnegative
measure thanks to the following Lemma~\ref{lem:restriction_counting_pushforward}.
% (which was previously postponed in the proof of Lemma~\ref{lem:boundXhatX}).

\begin{lemma}
  \label{lem:restriction_counting_pushforward}
  Let $\mathbb{J} \subset \Mappings{\VV}{\YY}$ be a subset of mappings from the set $\VV$
  to the set $\YY$. Assume that the sets $\VV$ and $\YY$ are both finite. Then, for
  any {nonnegative} measure $\mu$ on the set $\VV$, we have that
  \begin{equation}
    \cardinal{\mathbb{J}_{\star} \np{\mu}}
    \le
    \cardinal{\YY}^{\cardinal{\supp \np{\mu}}}
    \eqfinv
    \label{eq:bounds_on_push_forward}
  \end{equation}
  where we recall that $\cardinal{\mathbb{J}_{\star} \np{\mu}}$ denotes the cardinal of
  the set $\bcardinal{ \nset{ j_{\star}\mu}{ j \in \mathbb{J}}}$ as exposed in
  Equation~\eqref{eq:set-pushforward}.
\end{lemma}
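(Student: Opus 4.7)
The plan is to exploit the fact that the pushforward measure $j_{\star}\mu$ depends on the mapping $j$ only through its restriction to $\supp(\mu)$, and then count restrictions.

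First, I would observe the key equality: for any $j \in \mathbb{J}$ and any $y \in \YY$,
\[
(j_{\star}\mu)(y) = \mu\bp{j^{-1}(y)} = \sum_{v \in \VV,\, j(v)=y} \mu\np{v} = \sum_{v \in \supp(\mu),\, j(v)=y} \mu\np{v},
\]
because points $v \notin \supp(\mu)$ contribute $\mu\np{v}=0$ by Equation~\eqref{eq:def_support_measure}. Thus whenever two mappings $j_1, j_2 \in \mathbb{J}$ coincide on $\supp(\mu)$, the sums above are equal term by term, so $(j_1)_{\star}\mu = (j_2)_{\star}\mu$.

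Second, I would use this to factor the map $j \mapsto j_{\star}\mu$ through restriction. Denoting by $r : \mathbb{J} \to \Mappings{\supp(\mu)}{\YY}$ the restriction mapping $r(j) = j_{\vert_{\supp(\mu)}}$, the previous observation means that $j \mapsto j_{\star}\mu$ is constant on the fibres of $r$. Hence the cardinality of its image $\mathbb{J}_{\star}\mu$ is at most the cardinality of the image of $r$, that is,
\[
\cardinal{\mathbb{J}_{\star}\mu} \le \bcardinal{\bset{j_{\vert_{\supp(\mu)}}}{j \in \mathbb{J}}} \le \bcardinal{\Mappings{\supp(\mu)}{\YY}}.
\]

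Third, I would conclude by counting: since $\supp(\mu)$ and $\YY$ are finite, the set $\Mappings{\supp(\mu)}{\YY} = \YY^{\supp(\mu)}$ has cardinality $\cardinal{\YY}^{\cardinal{\supp(\mu)}}$, yielding Equation~\eqref{eq:bounds_on_push_forward}. There is no real obstacle; the entire content is the first observation, and the rest is a one-line counting argument. The only care needed is to handle the case $\supp(\mu) = \emptyset$ (where $\mu$ is the zero measure, $j_{\star}\mu$ is the zero measure regardless of $j$, and the bound reads $1 \le \cardinal{\YY}^0 = 1$), which is covered by the same reasoning.
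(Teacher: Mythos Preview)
Your proof is correct and follows essentially the same approach as the paper: both arguments show that $j_{\star}\mu$ depends only on the restriction $j_{\vert_{\supp(\mu)}}$, then bound the number of such restrictions by $\cardinal{\YY}^{\cardinal{\supp(\mu)}}$. Your explicit treatment of the empty-support edge case is a nice touch, though the general argument already covers it.
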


\begin{proof}
  Let $\mu$ be a given {nonnegative} measure on $\VV$. For any $j \in \mathbb{J}$, we
  denote by $j_{\vert \supp \np{\mu}}$ the restriction of the mapping $j$ to the subset
  $\supp \np{\mu}\subset \VV$. For any $\dual \in \YY$, we have that
  \begin{align*}
    j_{\star} \mu \np{\dual}
    & = \mu \bp{ j^{-1}\np{\dual}}
      \tag{by definition~\eqref{eq:def_push_forward} of a pushforward measure}
    \\
    & = \mu \Bp{ \bp{j^{-1}\np{\dual} \cap {\supp \np{\mu}}} \cup
      \bp{j^{-1}\np{\dual} \cap \np{\supp \np{\mu}}^{c}}}
    \\
    & = \mu \bp{ j^{-1}\np{\dual} \cap \supp \np{\mu}} + \underbrace{\mu
      \bp{j^{-1}\np{\dual} \cap \np{\supp \np{\mu}}^{c}}}_{=0}
    \\
    & = \mu \bp{ j_{\vert \supp \np{\mu}}^{-1}\np{\dual}}
    \\
    & = \Bp{ \bp{j_{\vert \supp \np{\mu}}}_{\star} \mu} \np{\dual}
      \tag{by \eqref{eq:def_push_forward}}
      \eqfinp
  \end{align*}
  Thus, defining
  $ \mathbb{J}_{\vert \supp \np{\mu}}= \nset{j_{\vert
      \supp \np{\mu}}}{j\in \mathbb{J}}$,
  we get that
  \begin{equation*}
    \cardinal{ \nset{ j_{\star}\mu}{j \in
        \mathbb{J}}}
    =
    \cardinal{ \nset{ {\np{j_{\vert \supp \np{\mu}}}}_{\star}\mu}{
        j \in \mathbb{J}}}
    \le
    \cardinal{ \mathbb{J}_{\vert \supp \np{\mu}}}
    \le
    \cardinal{ \YY^{\supp \np{\mu}}}
    =
    \cardinal{\YY}^{\cardinal{\supp \np{\mu}}}
    \eqfinp
  \end{equation*}
  This ends the proof.
\end{proof}

We now bound the cardinality of sets of forward and backward mappings.

\begin{lemma}
  \label{lem:boundXhatX}
  Let $\nseqa{\generalFunctionSet_{\kindex}}{\kindex \in \NN}$ be a given sequence where,
  for each ${\kindex \in \NN}$, the set
  $\generalFunctionSet_{\kindex}\subset \Mappings{\overline{\PRIMAL}}{\overline{\PRIMAL}}$
  is a finite set of self-mappings on the set $\overline{\PRIMAL}$. The sets
  $\generalFunctionSet_{\kindex}$, for all ${\kindex \in \NN}$, are assumed to be either
  all \ForwardMappings\ sets or all \BackwardMappings\ sets.
  % that are of the form given by Equation~\eqref{eq:definition_fonctionprimal_dual}.
  We define the sequence $\nseqa{\generalFunctionBisSet_{\kindex}}{\kindex \in \NN}$,
  where, for each ${\kindex \in \NN}$, the set
  $\generalFunctionBisSet_{\kindex} \subset
  \Mappings{\Delta\np{\overline{\PRIMAL}}}{\Delta\np{\overline{\PRIMAL}}}$ is a finite set
  of self-mappings (on the set $\Delta\np{\overline{\PRIMAL}}$) given by
  \begin{equation}
    \forall \kindex \in \NN \eqsepv
    \generalFunctionBisSet_{\kindex} = \Renormalization \circ
    (\generalFunctionSet_{\kindex})_{\star}
    \eqfinp
    \label{eq:def_set_calR_Pushforward}
  \end{equation}
  Then, for any
  $\belief_0\in \Delta(\PRIMAL)$, we have the following bound
  \begin{equation}
    \forall n \in \NN \eqsepv
    \BBcardinal{ \bigcup_{\kindex = 0}^{n} {\generalFunctionBisSet}_{0:\kindex}(\belief_0)}
    \le (1+\cardinal{\PRIMAL})^{\cardinal{\supp(\belief_0)}}
    \eqfinv
    \label{eq:composeFk}
  \end{equation}
  where
  ${\generalFunctionBisSet}_{0:\kindex} = \generalFunctionBisSet_{\kindex} \circ \dots
  \circ \generalFunctionBisSet_{0}$ is defined in Equation~\eqref{eq:set-notations}.
\end{lemma}

\begin{proof}
  For all $\kindex\in \NN$, we have
  \begin{align*}
    {\generalFunctionBisSet}_{0:\kindex}(\belief_0)
    &= ({\generalFunctionBisSet}_{\kindex} \circ {\generalFunctionBisSet}_{\kindex-1}
      \circ \cdots \circ {\generalFunctionBisSet}_{0})(\belief_0)
      \tag{by Equation~\eqref{eq:F-0k}}
    \\
    & =\bp{ \Renormalization \circ \np{{\generalFunctionSet}_{\kindex}}_{\star} \circ \Renormalization \circ
      \np{{\generalFunctionSet}_{\kindex-1}}_{\star} \circ \cdots
      \circ \Renormalization \circ \np{{\generalFunctionSet}_{0}}_{\star}}(\belief_0)
      \tag{by Equation~\eqref{eq:def_set_calR_Pushforward}}
    \\
    &= \bp{ \Renormalization \circ \np{{\generalFunctionSet}_{\kindex}}_{\star} \circ
      \np{{\generalFunctionSet}_{\kindex-1}}_{\star} \circ \cdots
      \circ \np{{\generalFunctionSet}_{0}}_{\star}}(\belief_0)
      \intertext{by Lemma~\eqref{lem:composition_pushforward_equality}, as the sets
      ${\generalFunctionSet}_{\kindex}$ are, by assumption, either all  \ForwardMappings\
      sets or all \BackwardMappings\ sets,}
    &= \bp{  \Renormalization \circ \np{{\generalFunctionSet}_{\kindex} \circ
      {\generalFunctionSet}_{\kindex-1} \circ \cdots
      \circ {\generalFunctionSet}_{0}}_{\star}}
      (\belief_0)
      \intertext{as \( \np{{\generalFunctionSet}_{\kindex}}_{\star} \circ
      \np{{\generalFunctionSet}_{\kindex-1}}_{\star} \circ \cdots
      \circ \np{{\generalFunctionSet}_{0}}_{\star} = \np{{\generalFunctionSet}_{\kindex} \circ
      {\generalFunctionSet}_{\kindex-1} \circ \cdots
      \circ {\generalFunctionSet}_{0}}_{\star} \)
    % $\fonctionprimal_{\star}\circ \fonctionprimalbis_{\star} =
    % (\fonctionprimal \circ \fonctionprimalbis)_{\star}$,
by definition~\eqref{eq:def_push_forward} of a pushforward measure,}
    %   \tag{as $\fonctionprimal_{\star}\circ \fonctionprimalbis_{\star} =
    %   (\fonctionprimal \circ \fonctionprimalbis)_{\star}$}
    % \\
    &=  \Renormalization \bp{ \np{\generalFunctionSet_{0:\kindex}}_{\star}(\belief_0)}
      \eqfinp\tag{by Equation~\eqref{eq:F-0k}}
  \end{align*}
  Thus, we have, for all $n \in \NN$, $
  % \begin{equation*}
    \BBcardinal{ \bigcup_{\kindex=0}^n  {\generalFunctionBisSet}_{0:\kindex}(\belief_0)} \le
    \BBcardinal{\bp{\bigcup_{\kindex=0}^n {\generalFunctionSet}_{0:\kindex}}_{\star}(\belief_0)}
    % \eqsepv
    % \end{equation*}
    $, and the conclusion follows from Lemma~\ref{lem:restriction_counting_pushforward}
    with $\mathbb{J} = \bigcup_{\kindex=0}^n {\generalFunctionSet}_{0:\kindex}$,
    $\YY=\VV=\overline{\PRIMAL}$, and $\mu= \belief_0$.
\end{proof}

Note that Lemma~\ref{lem:boundXhatX} can be easily extended to cases with sequences
$\nseqa{\generalFunctionSet_{\kindex}}{\kindex \in \NN}$ of mixes of both
\ForwardMappings\ sets and \BackwardMappings\ sets as forward mappings are also
backward mappings by Equation~\eqref{eq:forward-to-backward}.
However, in the rest of this paper, we just need to consider non mixed sequences and thus we only need
Lemma~\ref{lem:boundXhatX}.

We now present a lemma on the conservation of the cardinality of the support of a
measure through a composition of sets of mappings, if we have conservation of the
cardinality for each individual set.

\begin{lemma}\label{lem:composeF}
  % \cyrille{voir remarque Pierre}
  Let $\nseqa{\generalFunctionBisSet_{\kindex}}{\kindex \in \NN}$ be a sequence of sets of
  self-mappings on the set $\BB$ (i.e. for all $\kindex \in \NN$,
  $\generalFunctionBisSet_{\kindex} \subset
  \Mappings{\BB}{\BB}$) --- 
  where we recall that the set  $\BB=\Delta(\XX) \cup \na{\cemeteryBelief}$ is
  given by Equation~\eqref{eq:def_BB} ---    
  and assume that, for all
  ${\kindex \in \NN}$, we have that
  % \mdl{je ne comprends pas la notation $\fonctionprimalbis(b)$}
  \begin{equation}
    \forall b \in \BB\eqsepv
    \quad \sum_{\fonctionprimalbis \in {\generalFunctionBisSet}_{\kindex}}
    \cardinal{\supp\bp{\fonctionprimalbis(b)_{\vert_{\PRIMAL}}}}
    \le \cardinal{\supp\np{b_{\vert_{\PRIMAL}}}}
    \eqfinp
    \label{eq:Fk-bound}
  \end{equation}
  Then, for any $\belief_0\in \Delta(\PRIMAL)$, we have the following bound
  \begin{equation}
    \forall \kindex\in \NN\eqsepv
    \bcardinal{ {\generalFunctionBisSet}_{0:\kindex}(\belief_0)\setminus \na{\cemeteryBelief}}
    \le
    \cardinal{\supp(\belief_0)}
    \eqfinv
    \label{eq:composeFk_bis}
  \end{equation}
  where
  ${\generalFunctionBisSet}_{0:\kindex}(\belief_0) = \generalFunctionBisSet_{\kindex}
  \circ \dots \circ \generalFunctionBisSet_{0} \np{\belief_0}$ is defined in
  Equation~\eqref{eq:F-0k}.
\end{lemma}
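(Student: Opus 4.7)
The plan is to prove the lemma by induction on $\kindex$, but using a stronger invariant than the one stated. Specifically, I would prove by induction on $\kindex \in \NN$ that
\[
\sum_{b \in {\generalFunctionBisSet}_{0:\kindex}(\belief_0) \setminus \na{\cemeteryBelief}} \cardinal{\supp\np{b_{\vert_{\PRIMAL}}}} \le \cardinal{\supp\np{\belief_0}}.
\]
Since every $b \in \BB \setminus \na{\cemeteryBelief}$ is a probability measure on $\PRIMAL$ and hence has $\cardinal{\supp\np{b_{\vert_{\PRIMAL}}}} \ge 1$, the bound~\eqref{eq:composeFk_bis} follows at once.

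Before starting the induction, I would observe that hypothesis~\eqref{eq:Fk-bound} applied to $b = \cemeteryBelief$ forces $\fonctionprimalbis(\cemeteryBelief) = \cemeteryBelief$ for every $\fonctionprimalbis \in \generalFunctionBisSet_\kindex$: indeed $\cardinal{\supp\np{\cemeteryBelief_{\vert_{\PRIMAL}}}} = 0$ and every term in the left-hand sum is nonnegative, so each image must have empty $\PRIMAL$-restriction, which in $\BB$ means it equals $\cemeteryBelief$. Thus $\cemeteryBelief$ is a shared fixed point of the $\generalFunctionBisSet_\kindex$, and the non-cemetery part of ${\generalFunctionBisSet}_{0:\kindex+1}(\belief_0)$ is the image under $\generalFunctionBisSet_{\kindex+1}$ of the non-cemetery part of ${\generalFunctionBisSet}_{0:\kindex}(\belief_0)$, minus possibly some values that collapse onto $\cemeteryBelief$.

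For the base case $\kindex = 0$, I would choose, for each $b' \in \generalFunctionBisSet_0(\belief_0) \setminus \na{\cemeteryBelief}$, a preimage $\fonctionprimalbis_{b'} \in \generalFunctionBisSet_0$ with $\fonctionprimalbis_{b'}(\belief_0) = b'$; these preimages are distinct, so summing $\cardinal{\supp\np{b'_{\vert_{\PRIMAL}}}}$ over $b'$ is at most the full sum $\sum_{\fonctionprimalbis \in \generalFunctionBisSet_0}\cardinal{\supp\np{\fonctionprimalbis(\belief_0)_{\vert_{\PRIMAL}}}}$, which is bounded by $\cardinal{\supp\np{\belief_0}}$ via~\eqref{eq:Fk-bound}. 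For the inductive step, denoting $A_\kindex = {\generalFunctionBisSet}_{0:\kindex}(\belief_0) \setminus \na{\cemeteryBelief}$, the same preimage-selection trick applied to the composition $\generalFunctionBisSet_{\kindex+1}\circ \cdots \circ \generalFunctionBisSet_0$ gives
\[
\sum_{b' \in A_{\kindex+1}} \cardinal{\supp\np{b'_{\vert_{\PRIMAL}}}} \le \sum_{b \in A_\kindex}\;\sum_{\fonctionprimalbis \in \generalFunctionBisSet_{\kindex+1}} \cardinal{\supp\np{\fonctionprimalbis(b)_{\vert_{\PRIMAL}}}} \le \sum_{b \in A_\kindex} \cardinal{\supp\np{b_{\vert_{\PRIMAL}}}} \le \cardinal{\supp\np{\belief_0}},
\]
the middle inequality coming from applying~\eqref{eq:Fk-bound} to each $b \in A_\kindex$ and the last one from the inductive hypothesis.

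The only mild subtlety, and thus the ``hard part'', is the preimage-counting step: one must be careful that $A_{\kindex+1}$ is genuinely a set (duplicates already identified) whereas the bound~\eqref{eq:Fk-bound} is stated per mapping $\fonctionprimalbis$, so collisions in $\generalFunctionBisSet_{\kindex+1}(A_\kindex)$ only strengthen the inequality. Beyond that, the argument is purely combinatorial and requires no further structure from the sets $\generalFunctionBisSet_\kindex$.
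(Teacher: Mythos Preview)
Your proposal is correct and takes essentially the same approach as the paper: both prove by forward induction the stronger invariant that the sum of the $\PRIMAL$-support sizes over the reachable beliefs is bounded by $\cardinal{\supp(\belief_0)}$, and then deduce the cardinality bound from the trivial lower bound $\cardinal{\supp(b_{\vert_{\PRIMAL}}})\ge 1$ for $b\neq\cemeteryBelief$. Your treatment of the collision issue (summing over the image set versus summing over mappings) is in fact more careful than the paper's write-up, where a couple of equalities in the inductive chain are tacitly inequalities; your preimage-selection argument and your explicit observation that $\cemeteryBelief$ is a common fixed point make this step cleaner.
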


\begin{proof}
  Let a belief $\belief_0\in \Delta(\PRIMAL)$ be given. As a preliminary result we prove,
  by forward induction on $\kindex \in \NN$, that
  \begin{equation}
    \forall \kindex \in \NN
    \eqsepv
    \sum_{\belief \in {\generalFunctionBisSet}_{0:\kindex}\np{\belief_0}}
    \bcardinal{\supp\np{\belief_{\vert_{\PRIMAL}}}}
    \le \cardinal{\supp\np{b_0}}
    \label{eq:F0k-induction}
    \eqfinp
  \end{equation}
  First, we consider the case $k=0$. As
  $\generalFunctionBisSet_{0:0} = \generalFunctionBisSet_0$ the result follows from
  Equation~\eqref{eq:Fk-bound} used for $k=0$ and $\belief=\belief_0$. Second, we
  consider $\kindex > 0$, and, assuming that Equation~\eqref{eq:F0k-induction} is
  satisfied for $\kindex$, we prove that it is also satisfied for~$\kindex{+}1$ as
  follows:
  \begin{align*}
    \sum_{\belief \in {\generalFunctionBisSet}_{0:\kindex+1}\np{\belief_0}}
    \bcardinal{\supp\np{\belief_{\vert_{\PRIMAL}}}}
    &=
      \sum_{\fonctionprimalbis \in {\generalFunctionBisSet}_{0:\kindex+1}}
      \bcardinal{\supp\bp{{\fonctionprimalbis(\belief_0)}_{\vert_{\PRIMAL}}}}
      \tag{by~\eqref{eq:set-pushforward} {, combined with $\fonctionprimalbis \in
      {\generalFunctionBisSet}_{0:\kindex+1} \subset \Mappings{\BB}{\BB}$}  %\mdl{je ne comprends pas la notation
        % $\fonctionprimalbis(\belief_0)$}
      }
    \\
    &=
      \sum_{\fonctionprimalbis' \in {\generalFunctionBisSet}_{\kindex+1},
      \fonctionprimalbis'' \in {\generalFunctionBisSet}_{0:\kindex}}
      \BBcardinal{\supp\Bp{{\fonctionprimalbis'\bp{
      \fonctionprimalbis''(\belief_0)}}_{\vert_{\PRIMAL}}}}
      \tag{as $ {\generalFunctionBisSet}_{0:\kindex+1}=
      {\generalFunctionBisSet}_{\kindex+1}\circ {\generalFunctionBisSet}_{0:\kindex}$}
    \\
    &=
      \sum_{\fonctionprimalbis'' \in {\generalFunctionBisSet}_{0:\kindex}}
      \Bp{\sum_{\fonctionprimalbis' \in {\generalFunctionBisSet}_{\kindex+1}}
      \BBcardinal{\supp\Bp{{\fonctionprimalbis'\bp{
      \fonctionprimalbis''(\belief_0)}}_{\vert_{\PRIMAL}}}}}
    \\
    &\le
      \sum_{\fonctionprimalbis'' \in {\generalFunctionBisSet}_{0:\kindex}}
      \BBcardinal{\supp\bp{{\fonctionprimalbis''(\belief_0)}_{\vert_{\PRIMAL}}}}
      \tag{using Equation~\eqref{eq:Fk-bound} for $k+1$ and
      $\belief=\fonctionprimalbis''\np{\belief_0}$}
    \\
    &=
      \sum_{\belief \in {\generalFunctionBisSet}_{0:\kindex}(\belief_0)}
      \bcardinal{\supp\bp{{\belief}_{\vert_{\PRIMAL}}}}
      \tag{by~\eqref{eq:set-pushforward}}
    \\
    &\le \cardinal{\supp\np{b_0}}
      \tag{by induction assumption \eqref{eq:F0k-induction} on $\kindex$}
      \eqfinp
  \end{align*}
  We conclude that Equation~\eqref{eq:F0k-induction} is satisfied for all
  $\kindex \in \NN$.

  Now, we turn to the proof of Equation~\eqref{eq:composeFk_bis}. We make the following
  observation: if $b \in \Delta\np{\PRIMAL}$, then we have that
  $\cardinal{\supp\np{b_{\vert_{\PRIMAL}}}} \ge 1$ and if $b=\cemeteryBelief$ then
  $\cardinal{\supp\np{b_{\vert_{\PRIMAL}}}}=0$. Thus, we have that
  \begin{align}
    \cardinal{ {\generalFunctionBisSet}_{0:\kindex}(\belief_0) \setminus
    \na{\cemeteryBelief}}
    &=  \sum_{b \in {\generalFunctionBisSet}_{0:\kindex}(\belief_0) \setminus
      \na{\cemeteryBelief}} 1
    \\
    &\le
      \sum_{b \in {\generalFunctionBisSet}_{0:\kindex}(\belief_0) \setminus \na{\cemeteryBelief}}
      \cardinal{\supp\np{b_{\vert_{\PRIMAL}}}}
      \tag{as $\cardinal{\supp\np{b_{\vert_{\PRIMAL}}}}\ge 1$ for $b \in
      {\generalFunctionBisSet}_{0:\kindex}(\belief_0) \setminus \na{\cemeteryBelief}$}
    \\
    &=\sum_{b \in {\generalFunctionBisSet}_{0:\kindex}(\belief_0)}
      \cardinal{\supp\np{b_{\vert_{\PRIMAL}}}}
      \tag{as $\cardinal{\supp\np{{\cemeteryBelief}_{\vert_{\PRIMAL}}}}=0$}
    \\
    &\le\cardinal{\supp\np{b_0}}\tag{by \eqref{eq:F0k-induction}}
      \eqfinv
  \end{align}
  which gives Equation~\eqref{eq:composeFk_bis}. That concludes the proof.
\end{proof}

We now present a technical lemma.

\begin{lemma}
  \label{lem:hY}
  Let $\fonctionprimalbis \in \Mappings{\YY}{\VV}$ be a mapping from the set $\YY$ to the
  set $\VV$ and assume that the sets $\YY$ and $\VV$ are both finite. Let
  $\DualV \subset \VV$ be a subset of $\VV$. We define the mapping\footnote{ Note that the
    mapping $\fonctionprimalbis_{\DualV}$ is slightly different from
    $\Mapforward{\fonctionprimalbis}{\DualV}$. Indeed
    $\Mapforward{\fonctionprimalbis}{\DualV}$ are defined for self-mappings, whereas
    $\fonctionprimalbis_{\DualV}$ is defined for an extended codomain (set of
    destinations).} $\fonctionprimalbis_{\DualV}: \YY \to \VV \cup \na{\cemetery_{\VV}}$
  taking values in the extended set $\overline{\VV}=\VV \cup \na{\cemetery_{\VV}}$ as
  follows
  \begin{align}
    \fonctionprimalbis_{\DualV} :
    \dual \in \YY
    & \mapsto
      \begin{cases}
        \fonctionprimalbis\np{\dual}
        & \text{if} \quad
          \fonctionprimalbis\np{\dual}\in \DualV
          \eqfinv
        \\
        \cemetery_{\VV}
        & \text{elsewhere}
          \eqfinp
      \end{cases}
      \label{eq:definition_fonctionprimal_dual}
  \end{align}
  Then, for any {nonnegative} measure $\mu$ on the set $\YY$, we have that
  % Then, we have for all $\mu \in \Delta(\YY)$ that
  \begin{equation}
    \BBcardinal{\supp\Bp{\bp{\np{\fonctionprimalbis_{\DualV}}_{\star}\mu}_{\vert_{\VV}}}}
    \le \bcardinal{\supp\bp{\mu_{\vert_{\fonctionprimalbis^{-1}\np{\DualV}}}}}
    \label{eq:hY-bound}
    \eqfinp
  \end{equation}
  Moreover, for any finite family $\nseqa{\DualV_i}{i\in I}$ of pairwise disjoints subsets
  of $\VV$, we have that
  \begin{equation}
    \sum_{i\in I}
    \BBcardinal{\supp\Bp{\bp{\np{\fonctionprimalbis_{\DualV_i}}_{\star}\mu}_{
          \vert_{\VV}}}}
    \le
    \bcardinal{\supp\bp{\mu_{\vert_{\fonctionprimalbis^{-1}\np{\sqcup_{i\in I}
              \DualV_i}}}}}
    \eqfinv
    \label{a:hYi-bound}
  \end{equation}
  where $\sqcup$ is the union of disjoints sets.
\end{lemma}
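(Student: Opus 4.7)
The plan is to prove both inequalities by constructing an injection from the relevant support set(s) on the $\VV$-side into $\supp\bp{\mu_{\vert \fonctionprimalbis^{-1}\np{\DualV}}}$ (or its counterpart with $\sqcup_{i} \DualV_i$), using the fact that preimages under $\fonctionprimalbis$ of distinct points are disjoint.

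First, I would unfold the definitions. By Equation~\eqref{eq:def_push_forward} applied to $\fonctionprimalbis_{\DualV}$, for any $v\in\VV$ we have $\np{\fonctionprimalbis_{\DualV}}_{\star}\mu\np{v} = \mu\bp{\fonctionprimalbis_{\DualV}^{-1}\np{v}}$, and by the very definition of $\fonctionprimalbis_{\DualV}$ in Equation~\eqref{eq:definition_fonctionprimal_dual}, we have $\fonctionprimalbis_{\DualV}^{-1}\np{v}=\fonctionprimalbis^{-1}\np{v}$ whenever $v\in \DualV$ and $\fonctionprimalbis_{\DualV}^{-1}\np{v}=\emptyset$ otherwise. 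Hence the set $A=\supp\Bp{\bp{\np{\fonctionprimalbis_{\DualV}}_{\star}\mu}_{\vert \VV}}$ equals $\nset{v\in \DualV}{\mu\bp{\fonctionprimalbis^{-1}\np{v}}>0}$.

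Next, for each $v\in A$ there exists at least one $y\in \fonctionprimalbis^{-1}\np{v}$ with $\mu\np{\na{y}}>0$; fix such a choice and call it $\phi\np{v}$. Since $v\in \DualV$, we have $\phi\np{v} \in \fonctionprimalbis^{-1}\np{\DualV}$, and $\mu\np{\na{\phi\np{v}}}>0$, so $\phi\np{v}\in \supp\bp{\mu_{\vert\fonctionprimalbis^{-1}\np{\DualV}}}$. The map $\phi$ is injective because $\fonctionprimalbis\np{\phi\np{v}}=v$, so distinct $v$'s produce distinct $\phi\np{v}$'s. This injection yields Equation~\eqref{eq:hY-bound}.

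For the second inequality, I would apply the above construction independently for each index $i\in I$. Let $A_i=\supp\Bp{\bp{\np{\fonctionprimalbis_{\DualV_i}}_{\star}\mu}_{\vert \VV}} \subset \DualV_i$ and choose, as above, an injection $\phi_i: A_i \to \supp\bp{\mu_{\vert\fonctionprimalbis^{-1}\np{\DualV_i}}}$. Since the family $\nseqa{\DualV_i}{i\in I}$ is pairwise disjoint, so are the $A_i$'s, and so are their images under $\phi_i$ (because each $\phi_i\np{v}$ lies in $\fonctionprimalbis^{-1}\np{\DualV_i}$, and these preimages are pairwise disjoint). We can therefore glue all the $\phi_i$'s into a single map $\phi : \sqcup_{i\in I} A_i \to \supp\bp{\mu_{\vert\fonctionprimalbis^{-1}\np{\sqcup_{i\in I}\DualV_i}}}$, which is injective because (i) distinct $i$'s yield disjoint images, and (ii) within each $A_i$, $\phi_i$ is already injective. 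Taking cardinalities gives $\sum_{i\in I} \cardinal{A_i} = \cardinal{\sqcup_{i\in I} A_i} \le \bcardinal{\supp\bp{\mu_{\vert\fonctionprimalbis^{-1}\np{\sqcup_{i\in I}\DualV_i}}}}$, which is Equation~\eqref{a:hYi-bound}. There is no substantive obstacle here; the only thing that requires a moment of attention is checking that the injectivity argument still works on the disjoint union, which follows immediately from the pairwise disjointness of the $\DualV_i$'s.
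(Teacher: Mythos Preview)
Your proof is correct and follows essentially the same route as the paper: both arguments identify $\supp\bp{\np{\fonctionprimalbis_{\DualV}}_{\star}\mu}_{\vert\VV}$ with $\nset{v\in \DualV}{\mu\bp{\fonctionprimalbis^{-1}(v)}>0}$ and then inject it into $\supp\bp{\mu_{\vert \fonctionprimalbis^{-1}(\DualV)}}$ via the disjointness of the fibers $\fonctionprimalbis^{-1}(v)$ (the paper uses the set-valued map $v\mapsto \fonctionprimalbis^{-1}(v)\cap\supp(\mu)$ rather than a choice function, but this is cosmetic). For Equation~\eqref{a:hYi-bound} the paper applies Equation~\eqref{eq:hY-bound} termwise and then sums using the disjointness of the $\fonctionprimalbis^{-1}(\DualV_i)$, which is equivalent to your direct gluing of the injections $\phi_i$.
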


\begin{proof}
  We prove~Equation~\eqref{eq:hY-bound}. Let $\mu \in \Delta(\YY)$ be given. First, we
  note that, if the set
  $\supp\bp{\bp{\np{\fonctionprimalbis_{\DualV}}_{\star}\mu}_{\vert_{\VV}}}$ is empty, the
  result is obvious. Second, we assume that
  $\supp\bp{\bp{\np{\fonctionprimalbis_{\DualV}}_{\star}\mu}_{\vert_{\VV}}}\not=\emptyset$
  and consider
  $\dualV \in \supp\bp{\bp{\np{\fonctionprimalbis_{\DualV}}_{\star}\mu}_{\vert_{\VV}}}$.
  % \mdl{je ne comprends pas la phrase suivante.}
  Thus, $\dualV$ is restricted to belong to $\VV$ and, {by the definition of the support of
  a pushforward measure}, it must satisfy
  $\mu \bp{ \fonctionprimalbis_{\DualV}^{-1}(\dualV)}\not=0$. This implies that
  $ \fonctionprimalbis_{\DualV}^{-1}(\dualV)\not=\emptyset$ and, using the definition of
  $\fonctionprimalbis_{\DualV}$ (in Equation~\eqref{eq:definition_fonctionprimal_dual}),
  we obtain that $\dualV$ must belong to $\DualV$. We conclude that there must exist
  $\dual \in \fonctionprimalbis_{\DualV}^{-1}(\dualV)$ such that $\mu(\dual)\not=0$ which,
  combined with the fact that the mapping $\fonctionprimalbis_{\DualV}^{-1}$ coincides
  with the mapping $\fonctionprimalbis^{-1}$ on $\DualV$, gives that
  $\dual \in \fonctionprimalbis^{-1}(\dualV)\cap \supp\np{\mu}$.

  Now, consider the
  set-valued mapping
   $ \Gamma : \supp\bp{\bp{\np{\fonctionprimalbis_{\DualV}}_{\star}\mu}_{\vert_{\VV}}}
    \rightrightarrows \YY
    \eqsepv
    \dualV \mapsto
      \fonctionprimalbis^{-1}(\dualV)\cap \supp\np{\mu} $.
  By construction, the set-valued mapping $\Gamma$ takes values in the subsets of
  $\supp\np{\mu_{\vert_{\fonctionprimalbis^{-1}\np{\DualV}}}}$, and we have just
  proved that it takes values in the nonempty subsets of
  $\mu_{\vert_{\fonctionprimalbis^{-1}\np{\DualV}}}$. Moreover, the set-valued mapping
  $\Gamma$ is injective as we easily obtain that
  $(\fonctionprimalbis\circ \Gamma)(\dualV)=\dualV$ for all
  $\dualV \in
  \supp\bp{\bp{\np{\fonctionprimalbis_{\DualV}}_{\star}\mu}_{\vert_{\VV}}}$. Thus, the
  image of $\Gamma$ is a partition of a subset of
  $\supp\np{\mu_{\vert_{\fonctionprimalbis^{-1}\np{\DualV}}}}$ and we conclude that
  \begin{equation*}
    \bcardinal{\supp\bp{\bp{\np{\fonctionprimalbis_{\DualV}}_{\star}\mu}_{\vert_{\VV}}}}
    = \bcardinal{\Gamma\bp{\supp\bp{\bp{\np{\fonctionprimalbis_{\DualV}}_{\star}\mu}_{
            \vert_{\VV}}}}}
    \le \cardinal{\supp\np{\mu_{\vert_{\fonctionprimalbis^{-1}\np{\DualV}}}}}
    \eqfinv
  \end{equation*}
  which gives Equation~\eqref{eq:hY-bound}.

  \medskip

  Now, we turn to the proof of Inequality~\eqref{a:hYi-bound}. We successively have
  \begin{align*}
    \sum_{i\in I}
    \BBcardinal{\supp\Bp{\bp{\np{\fonctionprimalbis_{\DualV_i}}_{\star}\mu}_{
    \vert_{\VV}}}}
    & \le
      \sum_{i\in I}
      \bcardinal{\supp\bp{\mu_{\vert_{\fonctionprimalbis^{-1}\np{ \DualV_i}}}}}
      \tag{by~\eqref{eq:hY-bound} for each $i\in I$}
    \\
    &=
      \bcardinal{\supp\bp{\mu_{\vert_{\sqcup_{i\in I} \fonctionprimalbis^{-1}\np{
      \DualV_i}}}}} \\
    \intertext{(as the family  of subsets
    $\nseqa{\fonctionprimalbis^{-1}\np{\DualV_i}}{i\in I}$ is composed of  pairwise
    disjoints subsets as it was the case for the family $\nseqa{\DualV_i}{i\in I}$)}
    &=
      \bcardinal{\supp\bp{\mu_{\vert_{ \fonctionprimalbis^{-1}\np{
      \sqcup_{i\in I} \DualV_i}}}}}
      \tag{as $\fonctionprimalbis^{-1}\np{\sqcup_{i\in I} \DualV_i}=
      \sqcup_{i\in I}\fonctionprimalbis^{-1}\np{ \DualV_i}$}
      \eqfinv
  \end{align*}
  which concludes the proof.
\end{proof}

Lemma~\ref{lem:hY} shows that the cardinality of the support of a measure
decreases when the measure is transported by a pushforward measure induced by a mapping of
the form given by Equation~\eqref{eq:definition_fonctionprimal_dual}. A similar result
\begin{equation*}
  \forall \timeindex \in \timeset \eqsepv
  \forall \belief \in \BB \eqsepv
  \forall \controls \in \UU\eqsepv
  \sum_{\observer \in \OO}
  \bcardinal{\supp\bp{\beliefdynamics_{\timeindex} (\belief, \controls, \observer)}}
  \leq
  \bcardinal{\supp\np{\belief}}
  \eqfinv
\end{equation*}
is given in {\cite[Lemma 6.2]{littman_thesis}} but only
for the mappings $\nseqp{\beliefdynamics_{\timeindex}}{\timeindex \in \timeset}$
defined in Equation~\eqref{eq:beliefdynamics},
and with a proof not explicitly connected to pushforward measures.

We now present the final lemma necessary to prove Theorem~\ref{th:reachable_belief_bounds}.

\def\DualV{V}
\begin{lemma}
  \label{lem:second-bound}
  Let $\nseqa{\fonctionprimalbis^k}{\kindex \in \NN}$ be a sequence of self-mappings on
  the set $\overline\XX$ and, for all $\kindex \in \NN$, let
  $\nseqa{\DualP_i^k}{i\in I_k}$ be a finite family of two by two disjoints subsets of
  $\PRIMAL$
  % $\fonctionprimalbis^k : {\PRIMAL} \to {\PRIMAL}$ be a given mapping
 . Let
  $\nseqa{\generalFunctionSet_{\kindex}}{\kindex \in \NN}$ be the sequence of
  self-mappings on the set~$\overline\XX$, of the following form
  \begin{equation}
    \forall \kindex \in \NN \eqsepv \generalFunctionSet_{\kindex} = \bset{
      \Mapforward{\fonctionprimalbis^k}{\DualP_i^k}}{ i \in I_k}
    \subset \Mappings{\overline{\PRIMAL}}{\overline{\PRIMAL}} %\overline{\PRIMAL}^{\overline{\PRIMAL}}
    \eqsepv
    \label{eq:GGk_definition}
  \end{equation}
  where
  $\Mapforward{\fonctionprimalbis^k}{\DualP_i^k}: \overline{\PRIMAL} \to
  \overline{\PRIMAL}$ are built following
  Equation~\eqref{eq:definition_fonctionprimal_dual_forward}. Consider the sequence
  $\nseqa{\generalFunctionBisSet_k}{\kindex \in \NN}$ of sets of self-mappings on the set
  $\BB=\Delta(\XX) \cup \na{\cemeteryBelief}$, given, for all $\kindex \in \NN$, by
  $\generalFunctionBisSet_k = \Renormalization\circ (\generalFunctionSet_k)_{\star}$ and the
  associated sequence $\nseqp{\generalFunctionBisSet_{0:\kindex}}{\kindex \in \NN}$ as
  defined in Equation~\eqref{eq:set-notations}. Then, given
  $\belief_0\in \Delta(\PRIMAL)$, we have
  \begin{equation}
    \forall \kindex\in \NN\eqsepv
    \bcardinal{ {\generalFunctionBisSet}_{0:\kindex}(\belief_0)
      \setminus \na{\cemeteryBelief}}
    \le
    \cardinal{\supp(\belief_0)}
    \eqfinp
    \label{eq:composeFk_tri}
  \end{equation}
\end{lemma}
\begin{proof}
  The proof relies on Lemma~\ref{lem:hY} from which we obtain that the
  mappings~$\generalFunctionBisSet_k$ satisfy Equation~\eqref{eq:Fk-bound} for all
  $\kindex \in \NN$, and on Lemma~\ref{lem:composeF}.

  First, as a preliminary fact, we have that, for all $\mu \in \Delta\np{\barXX}$,
  $\supp \Bp{\bp{\Renormalization \np{\mu}}_{\vert \XX}} = \supp \np{ \mu_{\vert \XX}}$. Indeed,
  by~\eqref{eq:definition_renormalisation}, if $\mu\np{\XX} = 0$, then
  $
  \supp \Bp{\bp{\Renormalization \np{\mu}}_{\vert \XX}} = \supp \bp{\np{\cemeteryBelief}_{\vert \XX}}
  = \emptyset = \supp\bp{\mu_{\vert \XX}}
  $;
  whereas if $\mu\np{\XX} \neq 0$, then we have
  $
    \supp \Bp{\bp{\Renormalization \np{\mu}}_{\vert \XX}} = \supp \bp{\np{\frac{\mu_{\vert
            \XX}}{\mu\np{\XX}}, 0}_{\vert \XX}} = \supp \bp{\frac{\mu_{\vert
            \XX}}{\mu\np{\XX}}} = \supp \np{ \mu_{\vert \XX}}
  $.

  \medskip

  Second, we show that the
  mappings~$\generalFunctionBisSet_k$ satisfy Equation~\eqref{eq:Fk-bound} for all $\kindex
  \in \NN$.
  For that purpose, we fix $k\in \NN$, and $b \in \BB$, and we successively have
  % First, we fix $k\in \NN$, and $b \in \BB$. We have that
  \begin{align*}
    \sum_{\fonctionprimalbis \in {\generalFunctionBisSet}_{\kindex}}
    \bcardinal{\supp\bp{\fonctionprimalbis(b)_{\vert_{\PRIMAL}}}}
    &=
      \sum_{i \in I_k}
      \BBcardinal{\supp\Bp{\bp{\bp{\Renormalization \circ
      \np{\Mapforward{\fonctionprimalbis^k}{\DualP^k_i}}_{\star}}(b)}_{\vert_{\PRIMAL}}}}
      \tag{by definition of ${\generalFunctionBisSet}_{\kindex} = \Renormalization\circ (\generalFunctionSet_k)_{\star}$ and
      $\generalFunctionSet_k$ in~\eqref{eq:GGk_definition}}
    \\
    &=
      \sum_{i \in I_k}
      \bcardinal{\supp\Bp{\bp{\np{\Mapforward{\fonctionprimalbis^k}{\DualP^k_i}}_{\star}(b)
      }_{\vert_{\PRIMAL}}}}
    \tag{as, by the preliminary fact, $\forall \mu \in \Delta\np{\barXX}$,
     $\supp \Bp{\bp{\Renormalization \np{\mu}}_{\vert \XX}} = \supp \np{ \mu_{\vert \XX}} $}
\\
    &\le
      \bcardinal{\supp\bp{{\belief}_{\vert_{\fonctionprimalbis^{-1}\np{\sqcup_{i\in I_k}
      \DualP_i^k}}}}}
      \intertext{(by~\eqref{a:hYi-bound} in Lemma~\ref{lem:hY}, applied with  $\YY = \VV = \PRIMAL$ and
      $\DualV = \Primal$, $V_i = \DualP^k_i$ for $i\in I = I_k$)}  %     $\fonctionprimalbis: \XX \to \XX$,
    &\le
      \bcardinal{\supp\bp{{\belief}_{\vert_{\PRIMAL}}}}
      \tag{as $\fonctionprimalbis^{-1}\np{\sqcup_{i\in I_k} \DualP_i^k}\subset \PRIMAL$}
      \eqfinp
  \end{align*}
  Third, as the assumptions given in Equation~\eqref{eq:Fk-bound} are satisfied,
  the result follows by Lemma~\ref{lem:composeF}.
\end{proof}

\subsubsection{Proof of Lemmata~\ref{lem:tau_as_pushforward} and~\ref{lem:equ_BBR_FF}, and of
  Theorem~\ref{th:reachable_belief_bounds}}
\label{par:lemmata}
We now present the postponed proof of
Lemma~\ref{lem:tau_as_pushforward}, presented page~\pageref{lem:tau_as_pushforward}.

\begin{proof}[Proof of Lemma~\ref{lem:tau_as_pushforward}]
  Fix
  $\np{\controls, \observer} \in \UU \times \OO$,
  $\timeindex \in \timeset \setminus \na{\horizon}$, and $\belief \in \BB$,
  and then denote by
  $\Primal\subset \PRIMAL$ the subset
  $\Primal = \bp{\observerfunct_{\timeindex+1}^{\controls}}^{-1}\np{\observer}$.
  We need to prove Equation~\eqref{eq:equality_dynamics_belief_pushforward}, that is,
  to prove that we have
  \( %begin{equation}
  % \label{eq:equality_dynamics_belief_pushforward_proof}
  \beliefdynamics_{\timeindex}\np{\belief, \controls, \observer} = \Renormalization
  \circ \np{\pushforwardtransition^{\controls,\observer}_{\timeindex}}_{\star} \np{\belief}
  \).

  Using Equation~\eqref{eq:def_proba_observation}, and the definition of the subset $\Primal$, we have that
  \begin{equation}
    \ProbaObservation_{\timeindex+1}(\belief, \controls, \observer)
    = \belief \bp{ \np{
        \observerfunct_{\timeindex+1}^{\controls} \circ \dynamics_{\timeindex}^{\controls}}^{-1}
      \np{\observer}}
    = \belief \bp{ \np{\dynamics_{\timeindex}^{\controls}}^{-1}(\Primal)}
    \label{eq:Qreformule}
    \eqfinp
  \end{equation}
  Now, using the expression of $\beliefdynamics_{\timeindex}$ in
  Equation~\eqref{eq:beliefdynamics} combined with Equation~\eqref{eq:Qreformule} and the
  definition of $\Primal$, we obtain, for all $\primal \in \overline{\PRIMAL}$,
  that
  \begin{align}
    \beliefdynamics_{\timeindex}(\belief, \controls, \observer) \np{\primal}
    &=
      \begin{cases}
        \displaystyle
        \frac{\belief\bp{\np{\dynamics_{\timeindex}^{\controls}}^{-1} \np{\primal}}
          \findi{\Primal}\np{\primal}}
        {  \belief \bp{ \np{\dynamics_{\timeindex}^{\controls}}^{-1}(\Primal)}}
        &\text{ if }
        \belief \bp{ \np{\dynamics_{\timeindex}^{\controls}}^{-1}(\Primal)}
        \neq 0 \eqfinv\\
        0
        &\text{ otherwise.}
      \end{cases}
  \end{align}
  Then, Equation~\eqref{eq:equality_dynamics_belief_pushforward} follows from
  Lemma~\ref{lem:ext_pushforward_renorm_result} applied with the mapping
  $\fonctionprimalbis = \dynamics_{\timeindex}^{\controls}$ and with the subset
  $X=\bp{\observerfunct_{\timeindex+1}^{\controls}}^{-1}\np{\observer}$, as we have
  \begin{equation}
    \label{eq:link_pushForward_forwardMapping}
    \pushforwardtransition_{\timeindex}^{\controls, \observer} =
    \Mapforward{\dynamics_{\timeindex}^{\controls}}{
    \np{\observerfunct_{\timeindex+1}^{\controls}}^{-1}\np{\observer}}
    \eqfinv
  \end{equation}
  where $\Mapforward{\dynamics_{\timeindex}^{\controls}}{
    \np{\observerfunct_{\timeindex+1}^{\controls}}^{-1}\np{\observer}}$ is defined in
  Equation~\eqref{eq:definition_fonctionprimal_dual_forward}.
  This ends the proof.
\end{proof}

We now present the postponed proof of Lemma~\ref{lem:equ_BBR_FF},
presented page~\pageref{lem:equ_BBR_FF}.

\begin{proof}[Proof of Lemma~\ref{lem:equ_BBR_FF}]
  As a preliminary resukt, we prove that, for all times
  $\timeindex \in \timesetNoHorizon$, we have that 
  \begin{equation}
    \setsBeliefDynamics_{0:\timeindex} = \Renormalization\circ
    \np{\setsPushForward_{0:\timeindex}}_{\star}
    \eqfinp
    \label{eq:compose_TT_R_compose_FF}
  \end{equation}
  First, using the definitions of the sets $\setsBeliefDynamics_{\timeindex}$ and
  $\setsPushForward_{\timeindex}$ in Equations~\eqref{eq:def_setsBeliefDynamics}
  and~\eqref{eq:def_setsPushForward}, and applying Lemma~\ref{lem:tau_as_pushforward} with
  the notation~\eqref{eq:set-pushforward}, we obtain that
  \begin{equation}
    \setsBeliefDynamics_{\timeindex} = \Renormalization\circ
    \np{\setsPushForward_{\timeindex}}_{\star}
    \eqsepv
    \forall \timeindex \in \timesetNoHorizon
    \eqfinp
    \label{eq:equ_TT_t_R_circ_FF_t}
  \end{equation}
  Second, for all times $\np{\timeindex, \timeindex'} \in \bp{\timesetNoHorizon}^2$ and for
  all ordered pairs of controls and observations $\np{\controls, \controls'} \in \UU^2$ and
  $\np{\observer, \observer'} \in \OO^2$, we can apply
  Lemma~\ref{lem:composition_pushforward_equality} on the mappings
  $\pushforwardtransition_{\timeindex}^{\controls, \observer}$ and
  $\pushforwardtransition_{\timeindex'}^{\controls', \observer'}$. Indeed, by
  Equation~\eqref{eq:link_pushForward_forwardMapping}, the mappings
  $\pushforwardtransition_{\timeindex}^{\controls, \observer}$ and
  $\pushforwardtransition_{\timeindex'}^{\controls', \observer'}$ are $X$-forward
  mappings. We hence have, by Equation~\eqref{eq:lem_composition_pushforward_pushbackward}, that
  $\Renormalization \circ \pushforwardtransition_{\timeindex}^{\controls, \observer} \circ
  \Renormalization \circ \pushforwardtransition_{\timeindex'}^{\controls', \observer'} =
  \Renormalization \circ \pushforwardtransition_{\timeindex}^{\controls, \observer} \circ
  \pushforwardtransition_{\timeindex'}^{\controls', \observer'}$. Combined with
  Equation~\eqref{eq:equ_TT_t_R_circ_FF_t}, this leads to Equation~\eqref{eq:compose_TT_R_compose_FF}.

  Now, let $\belief_{0} \in \Delta \np{\XX}$. We prove
  by induction on $\timeindex$ that we have
  \begin{equation}
    \label{eq:inductionB}
    \BBR_{\timeindex+1} \np{\belief_0}
    =
    \setsBeliefDynamics_{0:\timeindex} \np{\belief_0}
    =
    \Renormalization \circ
    \bp{\setsPushForward_{0:\timeindex}}_{\star}\np{\belief_0}
    \eqsepv
    \forall \timeindex \in \timesetNoHorizon
    \eqfinp
  \end{equation}

  First, by Definition~\ref{def:reachable_space} of the set of reachable
  beliefs, we have that 
  \def\myop#1#2{\mathrel{\overset{\mbox{\normalfont\tiny\sffamily #1}}{#2}}}
  % \cyrille{Pierre n'aime pas trop le ``$\myop{\eqref{eq:def_BBR_t}}{=}$''}
  \begin{equation*}
    \BBR_{1} \np{\belief_0}
    \myop{\eqref{eq:def_BBR_t}}{=}
    \beliefdynamics_{0} \bp{\na{\belief_0}, \UU, \OO}
    \myop{\eqref{eq:def_setsBeliefDynamics}}{=}
    \setsBeliefDynamics_{0}\np{\belief_0}
    \myop{\eqref{eq:equ_TT_t_R_circ_FF_t}}{=}
    \Renormalization \circ
    \bp{\setsPushForward_{0}}_{\star}\np{\belief_0}
    \eqfinv
  \end{equation*}
  i.e. Equation~\eqref{eq:inductionB} stands at time $0$.
  Second, assuming Equation~\eqref{eq:inductionB} is true for $\timeindex \in
  \timesetNoHorizon$, we successively have
  \begin{equation*}
    \BBR_{\timeindex+2} \np{\belief_0}
    \myop{\eqref{eq:def_BBR_t}}{=}
    \beliefdynamics_{\timeindex+1} \bp{\BBR_{\timeindex+1} \np{\belief_0}, \UU, \OO}
    \myop{\eqref{eq:def_setsBeliefDynamics}}{=}
    \setsBeliefDynamics_{\timeindex+1}\bp{\BBR_{\timeindex+1} \np{\belief_0}}
    \myop{\eqref{eq:inductionB}}{=}
      \setsBeliefDynamics_{\timeindex+1} \circ
      \setsBeliefDynamics_{0:\timeindex} \np{\belief_0}
    \myop{\eqref{eq:set-notations}}{=}
      \setsBeliefDynamics_{0:\timeindex+1} \np{\belief_0}
    \myop{\eqref{eq:compose_TT_R_compose_FF}}{=}
    \Renormalization \circ
    \bp{\setsPushForward_{0:\timeindex+1}}_{\star}\np{\belief_0}
    \eqfinv
  \end{equation*}
  giving Equation~\eqref{eq:inductionB} for $\timeindex+1$.

  Finally, Equation~\eqref{eq:equ_BBR_FF} comes from the
  definition~\eqref{eq:def_BBR_t_tprime} of~$\BBR_{\ic{1,\horizon}}$,
the
definitions~\eqref{eq:def_setsBeliefDynamics_total}--\eqref{eq:def_setsPushforward_total}
of the sets $\setsBeliefDynamics$ and $\setsPushForward$, 
    and the previously established Equation~\eqref{eq:inductionB}.
\end{proof}

We can now give the detailed proof of Theorem~\ref{th:reachable_belief_bounds},
presented page~\pageref{th:reachable_belief_bounds}.

\begin{proof}[Proof of Theorem~\ref{th:reachable_belief_bounds}]

  Let $\belief_0 \in \Delta\np{\XX}$ be given.

  First, we first prove the inequality
  $\cardinal{\BBR_{\ic{1,\horizon}}(\belief_0)} \leq \left( 1 + \cardinal{\XX}
  \right)^{\cardinal{\supp \np{\belief_0}}}$.
  Using Equation~\eqref{eq:equ_BBR_FF} in Lemma~\ref{lem:equ_BBR_FF},
  we have that 
  $\BBR_{\ic{1,\horizon}}(\belief_0) =
  \setsBeliefDynamics\np{\belief_0}$. We hence get that 
  \def\myop#1#2{\mathrel{\overset{\mbox{\normalfont\tiny\sffamily #1}}{#2}}}
  \begin{equation*}
    \cardinal{\BBR_{\ic{1,\horizon}} \np{\belief_0}}
    \myop{\eqref{eq:equ_BBR_FF}}{=}
    \cardinal{\setsBeliefDynamics\np{\belief_0}}
    \myop{\eqref{eq:def_setsBeliefDynamics_total}}{=}
    \BBcardinal{
      \bigcup_{i = 0}^{\horizon-1} \setsBeliefDynamics_{0:i} \np{\belief_0}}
    \myop{\eqref{eq:composeFk}}{\le}
    \left( 1 + \cardinal{\XX} \right)^{\cardinal{\supp \np{\belief_0}}}
    \eqfinp
  \end{equation*}
  The last inequality is given by Equation~\eqref{eq:composeFk}, obtained by applying
  Lemma~\ref{lem:boundXhatX}. As all the elements of~$\setsPushForward_{\timeindex}$ are
  of the form given in Equation~\eqref{eq:def_Cal_T_dpomdp}, the two sequences
  $\nseqa{\setsPushForward_{\timeindex}}{\timeindex \in \ic{0, \horizon-1}}$ and
  $\nseqa{\setsBeliefDynamics_{\timeindex}}{\timeindex \in \ic{0, \horizon-1}}$ satisfy the
  assumptions of Lemma~\ref{lem:boundXhatX} --- where the role of
  $\nseqa{\generalFunctionBisSet_{\kindex}}{\kindex \in \NN}$ is taken by
  $\nseqa{\setsBeliefDynamics_{\timeindex}}{\timeindex \in \ic{0, \horizon-1}}$ and the
  role of $\nseqa{\generalFunctionSet_{\kindex}}{\kindex \in \NN}$ is taken by
  $\nseqa{\setsPushForward_{\timeindex}}{\timeindex \in \ic{0, \horizon-1}}$ (the proof of
  Lemma~\ref{lem:tau_as_pushforward} states that set
  $\setsPushForward_{\timeindex}$ is an \ForwardMappings\ set).

  Second, we prove that we have
  \begin{equation}
    \bcardinal{\BBR_{\ic{1,\horizon}}(\belief_0)} \leq
    1+ \cardinal{\supp\np{\belief_0}} \cardinal{\UU}^{\cardinal{\timeset}}
    \label{eq:second-inequality-B}
    \eqfinv
  \end{equation}
  in order to obtain Inequality~\eqref{eq:bounds_dpomdp}. With the help of the
  representation of the beliefs evolution mappings given by
  Lemma~\ref{lem:tau_as_pushforward}, Inequality~\eqref{eq:second-inequality-B} is
  obtained as an application of Lemma~\ref{lem:second-bound},
  % , given in Appendix~\ref{sect:technical_lemmatas_pushforward},
  that we detail now.

  For each $\timeindex \in \timesetNoHorizon$
  and each $\controls_t \in \UU$ we introduce the sets $
    \setsBeliefDynamicsUpInd{\controls_{\timeindex}}_{\timeindex}
    = \bset{ \beliefdynamics_{\timeindex}\np{\cdot, \controls_{\timeindex}, \observer}}{
      \observer \in \OO}$ and $
    \setsPushForwardUpInd{\controls_t}_{\timeindex}
    =\bset{ \pushforwardtransition^{\controls_{\timeindex},
        \observer}_{\timeindex}}{ \observer \in \OO} $.
  Using set notations described in Equations~\eqref{eq:set-notations}, we obtain that
  $\setsBeliefDynamicsUpInd{\controls_t} _{\timeindex} = \Renormalization\circ
  \np{\setsPushForwardUpInd{\controls_t}_{\timeindex}}_{\star}$. Then, using the definition of
  $\BBR_{\timeindex}(\belief_0)$ in Equation~\eqref{eq:def_BBR_t}, we have that, for all
  time $\timeindex \in \timeset\setminus\na{0}$, 
  \begin{equation}
    \BBR_{\timeindex}(\belief_0)
    =
    \bigcup_{\controls_{0:\timeindex-1}\in \UU_{0:\timeindex-1}}
    \setsBeliefDynamicsUpInd{\controls_{\timeindex-1}}_{\timeindex-1} \circ
    \setsBeliefDynamicsUpInd{\controls_{\timeindex-2}}_{\timeindex-2} \circ \cdots \circ
    \setsBeliefDynamicsUpInd{\controls_0}_{0}(\belief_0)
    =
    \bigcup_{\controls_{0:\timeindex-1}\in \UU_{0:\timeindex-1}}
    \setsBeliefDynamicsUpInd{\controls_{0:\timeindex-1}}_{0:\timeindex-1}(\belief_0)
    \eqfinp
    \label{eq:BversusF_bis}
  \end{equation}
  For a fixed sequence~$\controls_{0:\timeindex}\in \UU_{0:\timeindex}$ of controls, the
  associated sequences of mappings
  $\nseqa{\setsBeliefDynamicsUpInd{\controls_t}_{\timeindex}}{\timeindex \in \timeset}$ and
  $\nseqa{\setsPushForwardUpInd{\controls_t}_{\timeindex}}{\timeindex \in \timeset}$
  satisfy the assumptions of Lemma~\ref{lem:second-bound} --- where the role of
  $\nseqa{\generalFunctionBisSet_{\kindex}}{\kindex \in \NN}$ is taken by
  $\nseqa{\setsBeliefDynamicsUpInd{\controls_t}_{\timeindex}}{\timeindex \in \ic{-1,
      \horizon}}$, the role of $\nseqa{\generalFunctionSet_{\kindex}}{\kindex \in \NN}$ is
  taken by
  $\nseqa{\setsPushForwardUpInd{\controls_t}_{\timeindex}}{\timeindex \in \ic{-1,
      \horizon}}$ and the role of the family of disjoint sets
  $\nseqa{\DualP_i^k}{i\in I_k}$ is taken by the family
  $\nseqa{\np{\observerfunct_{\timeindex}^{\controls}}^{-1}\np{\observer}}{\observer \in
    \OO, \timeindex \in \ic{-1, \horizon}}$ (the proof of
  Lemma~\ref{lem:tau_as_pushforward}
  % , in
  % Appendix~\ref{sect:technical_lemmatas_pushforward}
  states that the set
  $\setsPushForward_{\timeindex}$ is an \ForwardMappings\ set). We hence get that
  \begin{equation}
    \forall \timeindex\in \timesetNoHorizon \eqsepv
    \bcardinal{ {\setsBeliefDynamicsUpInd{\controls_{0:\timeindex}}}_{0:\timeindex}(\belief_0)
      \setminus \na{\cemeteryBelief}}
    \le
    \cardinal{\supp(\belief_0)}
    \eqfinp
    \label{eq:F-fixed-controls}
  \end{equation}
  Finally, we obtain
  \begin{align*}
    \bcardinal{\BBR_{\ic{1,\horizon}}(\belief_0)}
    &= \BBcardinal{\bigcup_{\timeindex=1}^{\horizon}\bp{\BBR_{\timeindex}\np{\belief_0}}}
      \tag{using Equation~\eqref{eq:def_BBR_t_tprime}}
    \\
    &\leq 1+ \BBcardinal{\bigcup_{\timeindex=1}^{\horizon}\bp{\BBR_{\timeindex}\np{\belief_0}
      \setminus \na{\cemeteryBelief}}}
      \tag{by removing $\cemeteryBelief$ from $\BBR_{\timeindex}\np{\belief_0}$ for all
      $\timeindex$}
    \\
    &= 1+ \BBcardinal{ \bigcup_{\timeindex=0}^{\horizon-1}
      \bigcup_{\controls_{0:\timeindex}\in \UU_{0:\timeindex}}
      \bp{\setsBeliefDynamicsUpInd{\controls_{0:\timeindex}}_{0:\timeindex}(\belief_0)
      \setminus \na{\cemeteryBelief}}}
      \tag{using~Equation~\eqref{eq:BversusF_bis}}
    \\
    &\leq 1 + \sum_{\timeindex=0}^{\horizon-1}
      \sum_{\controls_{0:t}\in \UU_{0:t}}
      \bcardinal{
      \bp{
      \setsBeliefDynamicsUpInd{\controls_{0:\timeindex}}_{0:\timeindex}(\belief_0)\setminus
      \na{\cemeteryBelief}}
      }
      \tag{as $\cardinal{A \cup B} \leq \cardinal{A} + \cardinal{B}$}
    \\
    &\leq 1+ \sum_{\timeindex=0}^{\horizon-1}
      \sum_{\controls_{0:t}\in \UU_{0:t}}
      \cardinal{ \supp(\belief_0)}
      \tag{using Equation~\eqref{eq:F-fixed-controls}}
    \\
    &\leq 1+ \sum_{\timeindex=0}^{\horizon-1}
      \cardinal{\UU}^{\timeindex+1}
      \cardinal{ \supp(\belief_0)}
      \tag{as $\UU_{0:t} = \UU^{\timeindex+1}$}
    \\
    &\leq 1+
      \cardinal{\UU}\Bp{\frac{\cardinal{\UU}^{\horizon}-1}{\cardinal{\UU} - 1}}
      \cardinal{ \supp(\belief_0)}
    \tag{as $\sum_{i=0}^N x^i = \frac{x^{N+1}-1}{x-1}$ for $x\neq 1$}
    \\
    &\leq 1 +  \cardinal{\UU}^{\cardinal{\timeset}} \cardinal{ \supp(\belief_0)}
      \tag{as $\cardinal{\timeset} = \horizon + 1$ and $\cardinal{\UU} > 1$}
      \eqfinp
  \end{align*}
  % \rouge{rajouter dans l'\'{e}nonc\'{e} le cas $\cardinal{\UU}= 1$}
  We have established the Inequality~\eqref{eq:second-inequality-B}, and this concludes the
  proof.
\end{proof}

% \subsubsubsection{Discrete case}\label{subsubsect:beliefs_dynamics_discrete}
\subsection{Complementary result on \MdpomdpFunctionSet s}
\label{sect:complements-det-pomdp}
In this subsection, we present complementary results on \MdpomdpFunctionSet s by applying
the framework presented in Appendix~\ref{sect:technical_lemmatas_pushforward}. We notably
apply the notion of forward and backward mappings, presented in
Equations~\eqref{eq:definition_fonctionprimal_dual_forward}
and~\eqref{eq:definition_fonctionprimal_dual_backward}, and the notion of pushforward
measures, defined in Equation~\eqref{eq:def_push_forward}. First,
in~\S\ref{subsect:properties_of_mdpomdp_function_sets}, we present and prove the lemmata
used in the proofs of Proposition~\ref{prop:dynamics_of_dpomdp_separated_ensure_mdpomdp}
and Theorem~\ref{th:bound_belief_space_mdpomdp} presented in Sect.~\ref{sect:mdpomdp}. Second,
in~\S\ref{subsect:example_mdpomdp}, we present a few examples of \Mdpomdp s.

\subsubsection{Properties of \MdpomdpFunctionSet s}
\label{subsect:properties_of_mdpomdp_function_sets}

\begin{lemma}
  \label{lem:separated_mappings_to_cemetery_separated_mapping}
  Let $\GG$ be an \MBackwardMappings\ set
  as in Definition~\ref{def:BackwardAndForwardMappingSets}.
  If $\mathbb{M}$ is a \separatedMappingSet,
  then $\GG$ is a \MdpomdpFunctionSet.
\end{lemma}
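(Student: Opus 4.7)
The plan is to unpack the backward-mapping form of elements of $\GG$ and then reduce the $\cemetery$-separation of each pair to the separation of the underlying pair in~$\mathbb{M}$.

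First, I would take an arbitrary pair $(g_1,g_2)\in \GG^2$ and use Definition~\ref{def:BackwardAndForwardMappingSets} to write $g_1 = \Mapbackward{\phi_1}{X_1}$ and $g_2 = \Mapbackward{\phi_2}{X_2}$ for some $\phi_1,\phi_2 \in \mathbb{M}$ and subsets $X_1,X_2\subset \PRIMAL$. The key preliminary step is to identify $g_i^{-1}(\PRIMAL)$: by inspection of Equation~\eqref{eq:definition_fonctionprimal_dual_backward}, a point $\overline{x}\in\overline{\PRIMAL}$ satisfies $g_i(\overline{x})\in \PRIMAL$ if and only if $\overline{x}\in X_i$ (the case $\overline{x}=\cemetery$ maps to $\cemetery\notin\PRIMAL$, and the case $\overline{x}\in\PRIMAL\setminus X_i$ also maps to $\cemetery$). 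Hence
\[
  g_1^{-1}(\PRIMAL)\cap g_2^{-1}(\PRIMAL) \;=\; X_1\cap X_2\;\subset\;\PRIMAL.
\]

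Second, I would verify that on this intersection $g_1$ and $g_2$ coincide with $\phi_1$ and $\phi_2$: for every $y\in X_1\cap X_2$ one has $g_i(y)=\phi_i(y)$ since $y\in X_i$ in both cases. Therefore, checking that the pair $(g_1,g_2)$ restricted to $X_1\cap X_2$ is separated (in the sense of Definition~\ref{def:separatedMappingSet}) amounts to checking that the pair $(\phi_1|_{X_1\cap X_2},\phi_2|_{X_1\cap X_2})$ is separated.

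Third, I would invoke the separation hypothesis on $\mathbb{M}$. If there exists $y\in X_1\cap X_2$ with $g_1(y)=g_2(y)$, then $\phi_1(y)=\phi_2(y)$ with $y\in\PRIMAL$, so by Definition~\ref{def:separatedMappingSet} applied to $\mathbb{M}\subset\Mappings{\PRIMAL}{\PRIMAL}$ we get $\phi_1=\phi_2$ everywhere on $\PRIMAL$, hence in particular $\phi_1|_{X_1\cap X_2}=\phi_2|_{X_1\cap X_2}$, i.e. the restrictions of $g_1$ and $g_2$ to $X_1\cap X_2$ coincide. This shows the pair $(g_1,g_2)$ is $\cemetery$-separated, and since the pair was arbitrary, $\GG$ is a $\cemetery$-separated mapping set. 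The argument is essentially bookkeeping; the only place where care is needed is in recognizing that $g_i^{-1}(\PRIMAL)=X_i$ so that the restriction set in Definition~\ref{def:cemeterySeparation_mapping_pair} is really a subset of $\PRIMAL$ on which $g_i$ and $\phi_i$ agree, which is exactly what allows us to transfer the separation property from $\mathbb{M}$ to $\GG$.
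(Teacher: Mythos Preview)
Your proof is correct and follows essentially the same approach as the paper: write each $g_i$ as a backward mapping $\Mapbackward{\phi_i}{X_i}$, identify the set $g_1^{-1}(\PRIMAL)\cap g_2^{-1}(\PRIMAL)$ on which the $g_i$ coincide with the $\phi_i$, and then transfer the separation property from $\mathbb{M}$ to $\GG$. Your computation $g_i^{-1}(\PRIMAL)=X_i$ is in fact the correct one for backward mappings and makes the argument slightly cleaner than the paper's version.
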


\begin{proof}
  Let $\generalFunction_1$ and $\generalFunction_2$ be two mappings in $\GG$. In order to
  prove that $\GG$ is a \MdpomdpFunctionSet, using
  Definition~\ref{def:cemeterySeparation_mapping_pair}, we need to prove that the
  restrictions of the two mappings $\generalFunction_1$ and $\generalFunction_2$ on the
  subset $A=\generalFunction_1^{-1}(\XX) \cap \generalFunction_2^{-1}(\XX)$ are separated.
  Using the property of the set $\GG$, there exist $m^1 \in \mathbb{M}$ (resp.
  $m^2 \in \mathbb{M}$) and $\Primal_1\subset \XX$ (resp. $\Primal_2\subset \XX$) such
  that $\generalFunction_1=\Mapbackward{m^1}{\Primal_1}$ (resp.
  $\generalFunction_2=\Mapbackward{m^2}{\Primal_2}$). Combined with the definition of
  $\Mapbackward{m^1}{\Primal_1}$ in
  Equation~\eqref{eq:definition_fonctionprimal_dual_backward}, this gives that
  $\generalFunction_1^{-1}(\XX) = (m^1)^{-1}(\Primal_1)$ (resp.
  $\generalFunction_2^{-1}(\XX) = (m^2)^{-1}(\Primal_2)$). We therefore obtain the
  equality $A=(m^1)^{-1}(\Primal_1) \cap (m^2)^{-1}(\Primal_2)$.

  First, if the set $A$ is empty, it is immediate to prove that $\generalFunction_1$ and
  $\generalFunction_2$ are \cemeterySeparated. Second, assuming that $A$ is not empty and
  using again the fact that $\generalFunction_1=\Mapbackward{m^1}{\Primal_1}$, we obtain
  that $\generalFunction_1$ coincides with $m^1$ on the set $A$, and in the same way we
  obtain that $\generalFunction_2$ coincides with $m^2$ on the set $A$.

  Now, as $m^1$ and $m^2$ belong to a \separatedMappingSet, they are separated mappings,
  and therefore their restrictions to $A$ are also separated. We conclude that the
  restrictions of $\generalFunction_1$ and $\generalFunction_2$ on the subset
  $A=\generalFunction_1^{-1}(\XX) \cap {\generalFunction_2}^{-1}(\XX)$ are separated. This
  ends the proof.
\end{proof}

A direct consequence of Lemma~\ref{lem:separated_mappings_to_cemetery_separated_mapping}
is the following Corollary~\ref{cor:separated_familly_and_sets_leads_to_union_separated}.
\begin{corollary}
  \label{cor:separated_familly_and_sets_leads_to_union_separated}
  Let $\nseqa{\mathbb{M}_\kindex}{\kindex \in \NN}$ be a sequence of sets of
  self-mappings on the set~$\overline\XX$.
  Let $\nseqa{\generalFunctionSet_{\kindex}}{\kindex \in \NN}$ be a sequence
  of sets of self-mappings on the set $\overline\XX$, such that, for all $k\in \NN$,
  $\generalFunctionSet_{\kindex}$ is an \MkBackwardMappings{k}\ set.
  If the set
  $\cup_{\kindex \in \NN} \bp{\mathbb{M}_k\circ \mathbb{M}_{k-1}
    \circ \cdots \circ  \mathbb{M}_0}$ of mappings is
  a \separatedMappingSet, then the set
  $\cup_{\kindex \in \NN} \bp{\generalFunctionSet_k\circ \generalFunctionSet_{k-1}
    \circ \cdots \circ  \generalFunctionSet_0}$ is
  a \cemeterySeparated\ mapping set.
\end{corollary}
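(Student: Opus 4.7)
The plan is to reduce Corollary~\ref{cor:separated_familly_and_sets_leads_to_union_separated} to a direct application of Lemma~\ref{lem:separated_mappings_to_cemetery_separated_mapping}. The pivotal observation is that Equation~\eqref{eq:stable_composition_backward_mappings} already expresses the composition of two backward mappings as a single backward mapping whose underlying mapping is the composition of the two underlying mappings. Iterating this formula will show that any finite composition of backward mappings is itself a backward mapping of the corresponding composed underlying mapping.

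First, I would fix $k\in \NN$ and take arbitrary elements $g_i \in \generalFunctionSet_i$ for $i=0,\ldots,k$. By Definition~\ref{def:BackwardAndForwardMappingSets}, there exist $m^i \in \mathbb{M}_i$ and $P_i \subset \XX$ such that $g_i = \Mapbackward{m^i}{P_i}$. Then, by induction on $k$ using Equation~\eqref{eq:stable_composition_backward_mappings}, I would establish the existence of some subset $P \subset \XX$ (obtained as a nested intersection of sets of the form $P_i$ and preimages under the $m^j$) such that
\[
g_k \circ g_{k-1} \circ \cdots \circ g_0 = \Mapbackward{(m^k \circ m^{k-1} \circ \cdots \circ m^0)}{P}
\eqfinp
\]
The explicit expression of $P$ is not needed for the argument, only its existence, which is precisely what Definition~\ref{def:BackwardAndForwardMappingSets} requires.

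Second, this equality shows that every element of $\generalFunctionSet_k \circ \generalFunctionSet_{k-1} \circ \cdots \circ \generalFunctionSet_0$ is a backward mapping of an element of $\mathbb{M}_k \circ \mathbb{M}_{k-1} \circ \cdots \circ \mathbb{M}_0$. Taking the union over $k \in \NN$, I conclude that the set $\bigcup_{\kindex \in \NN} \bp{\generalFunctionSet_k \circ \generalFunctionSet_{k-1} \circ \cdots \circ \generalFunctionSet_0}$ is a $\widetilde{\mathbb{M}}$-backward mapping set in the sense of Definition~\ref{def:BackwardAndForwardMappingSets}, where $\widetilde{\mathbb{M}} = \bigcup_{\kindex \in \NN} \bp{\mathbb{M}_k \circ \mathbb{M}_{k-1} \circ \cdots \circ \mathbb{M}_0}$.

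Third, the hypothesis states that $\widetilde{\mathbb{M}}$ is a \separatedMappingSet. Applying Lemma~\ref{lem:separated_mappings_to_cemetery_separated_mapping} with $\mathbb{M} = \widetilde{\mathbb{M}}$ and $\GG = \bigcup_{\kindex \in \NN} \bp{\generalFunctionSet_k \circ \cdots \circ \generalFunctionSet_0}$ immediately yields the conclusion. The argument is essentially bookkeeping: the only mildly technical step is the iterative composition formula, which follows by a direct induction from Equation~\eqref{eq:stable_composition_backward_mappings}, and no obstacle beyond careful indexing is anticipated.
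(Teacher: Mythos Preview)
Your proposal is correct and follows essentially the same approach as the paper: both iterate the composition formula of Equation~\eqref{eq:stable_composition_backward_mappings} to show that each element of $\generalFunctionSet_k\circ\cdots\circ\generalFunctionSet_0$ is a backward mapping of an element of $\mathbb{M}_k\circ\cdots\circ\mathbb{M}_0$, and then invoke Lemma~\ref{lem:separated_mappings_to_cemetery_separated_mapping}. Your presentation is arguably cleaner in that you directly build the union set $\widetilde{\mathbb{M}}$ and apply the lemma once, whereas the paper treats the two-fold composition explicitly and then waves at induction and the union as ``straightforward arguments''.
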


\begin{proof}
  Let $\generalFunctionSet_1$ and $\generalFunctionSet_2$ be respectively an
  \MkBackwardMappings{1}\ set and an \MkBackwardMappings{2}\ set.
  Then, we have that
  \begin{align}
    \generalFunctionSet_1 \circ \generalFunctionSet_2
    &= \bset{ g_1 \circ g_2 }{g_1 \in  \generalFunctionSet_1 \text{ and } g_2 \in \generalFunctionSet_2}
      \nonumber
      \tag{by Notation~\eqref{eq:set-mappings-composition} for composition}
    \\
    &\subset \bset{ \Mapbackward{m^1}{\Primal_1} \circ \Mapbackward{m^2}{\Primal_2}}
      {m^1 \in \mathbb{M}_1\eqsepv m^2 \in \mathbb{M}_2\eqsepv \Primal_1\subset  \PRIMAL
      \eqsepv \Primal_2 \subset  \PRIMAL}
      \tag{by property~\eqref{MBackwardMappingsDef_a} of a \MBackwardMappings\ set}
      \nonumber
    \\
    &\subset \bset{
      \Mapbackward{(m^1\circ m^2)}
      {\Primal_2 \cap (m^2)^{-1}(\Primal_1)}}
      {m^1 \in \mathbb{M}_1\eqsepv m^2 \in \mathbb{M}_2\eqsepv \Primal_1\subset  \PRIMAL
      \eqsepv \Primal_2 \subset  \PRIMAL}
      \tag{by property~\eqref{eq:stable_composition_backward_mappings}}
      \nonumber
      % \intertext{(by~\eqref{eq:stable_composition_backward_mappings})}
    \\
    &\subset \bset{ m_{\Primal}}{ m \in \mathbb{M}_1\circ \mathbb{M}_2 \text{ and }  \Primal\subset  \PRIMAL}
      \eqfinp\nonumber
  \end{align}
  We have obtained that $\generalFunctionSet_1\circ \generalFunctionSet_2$ is a
  \MeBackwardMappings{\mathbb{M}_1\circ \mathbb{M}_2} set. Thus, if
  $\mathbb{M}_1\circ \mathbb{M}_2$ is a \separatedMappingSet, then the set
  $\generalFunctionSet_1\circ \generalFunctionSet_2$ is a \cemeterySeparated\
  mapping set by using
  Lemma~\ref{lem:separated_mappings_to_cemetery_separated_mapping}.  The end of
  the proof follows by induction on the number of compositions of sets, and by
  straightforward arguments when considering unions of \BackwardMappings\ sets.
\end{proof}

Before presenting bounds on the  cardinality of a \MdpomdpFunctionSet, we present
Lemma~\ref{lem:cardinality_mdpomdp_pushforward_family_no_delta}.

\begin{lemma}
  \label{lem:cardinality_mdpomdp_pushforward_family_no_delta}
  Let $\mathbb{J} \subset \Mappings{\PRIMAL}{\DUAL}$ be a set of mappings from the finite
  set $\PRIMAL$ to the finite set $\DUAL$.
  % and assume that the sets $\PRIMAL$ and $\DUAL$ are both finite.
  Assume that
  % the set $\mathbb{J}$ satisfies the following assumption:
  for all ordered pairs of mappings $\np{j, j'} \in \mathbb{J}^2$, if there exists
  $\primal \in \PRIMAL$ such that $j(\primal) = j'(\primal)$, then $j = j'$. Then, we have
  that
  \begin{equation}
    \cardinal{\mathbb{J}} \leq \cardinal{\DUAL} \eqfinp
    \label{eq:bound-sep}
  \end{equation}
\end{lemma}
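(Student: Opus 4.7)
The plan is to observe that the separation hypothesis amounts to the injectivity of every evaluation map on $\mathbb{J}$, and then to count elements via any one such evaluation.

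First, I would fix an arbitrary element $\primal_0 \in \PRIMAL$ (assuming $\PRIMAL$ is non-empty, for otherwise $\mathbb{J}$ contains at most the empty mapping and the bound holds trivially as soon as $\DUAL$ is non-empty). Second, I would consider the evaluation mapping
\begin{equation*}
  \mathrm{ev}_{\primal_0} : \mathbb{J} \to \DUAL \eqsepv j \mapsto j(\primal_0) \eqfinp
\end{equation*}
The separation hypothesis says that for any $(j,j') \in \mathbb{J}^2$, the existence of some $\primal \in \PRIMAL$ with $j(\primal) = j'(\primal)$ forces $j = j'$. Applied to $\primal = \primal_0$, this is exactly the statement that $\mathrm{ev}_{\primal_0}$ is injective.

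Third, since $\mathbb{J}$ injects into the finite set $\DUAL$, we get $\cardinal{\mathbb{J}} \leq \cardinal{\DUAL}$, which is Equation~\eqref{eq:bound-sep}. There is no real obstacle: the only subtlety is the trivial edge case where $\PRIMAL$ is empty, in which $\mathbb{J}$ reduces to at most the empty mapping and the inequality is immediate under the (implicit) nondegeneracy assumption $\DUAL \neq \emptyset$.
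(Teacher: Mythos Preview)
Your proof is correct and essentially identical to the paper's: both fix an element of $\PRIMAL$, observe that the corresponding evaluation map $\mathbb{J}\to\DUAL$ is injective by the separation hypothesis, and conclude the cardinality bound. Your treatment even adds the edge case $\PRIMAL=\emptyset$, which the paper leaves implicit.
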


\begin{proof}
  Fix $\overline{\primal} \in \PRIMAL$ and consider the evaluation mapping
  $\gamma_{\overline{\primal}}:\mathbb{J} \to \DUAL$ defined by
  $\gamma_{\overline{\primal}}\np{ j}=j\np{\overline{\primal}}$ for all
  $j \in \mathbb{J}$. The image $\gamma_{\overline{\primal}}\np{\mathbb{J}}$ of the set
  $\mathbb{J}$ by the mapping $\gamma_{\overline{\primal}}$ is the subset
  $\nset{ j\np{\overline{\primal}}}{ j \in \mathbb{J}}$ of $\DUAL$. First, the codomain of
  the mapping $\gamma_{\overline{\primal}}$ being the finite set $\DUAL$, we immediately
  get that
  \begin{equation}
    \bcardinal{\gamma_{\overline{\primal}}\np{\mathbb{J}}} \le \cardinal{\DUAL}
    \eqfinp
    \label{eq:codomainbound}
  \end{equation}
  Second, the mapping $\gamma_{\overline{\primal}}$ is injective. Indeed, using the
  assumption on the set $\mathbb{J}$, two distinct mappings $j$ and $j'$ in the set
  $\mathbb{J}$ must satisfy
  $\gamma_{\overline{\primal}}\np{j} = j\np{\overline{\primal}} \not=
  j'\np{\overline{\primal}} = \gamma_{\overline{\primal}}\np{j'}$. Thus, we must have the
  equality
  $ \cardinal{\mathbb{J}}= \bcardinal{\gamma_{\overline{\primal}}\np{\mathbb{J}}} $ which,
  combined with Equation~\eqref{eq:codomainbound}, gives Inequality~\eqref{eq:bound-sep},
  and concludes the proof.
\end{proof}

We now use the previous Lemma~\ref{lem:cardinality_mdpomdp_pushforward_family_no_delta} to
bound the cardinality of a \MdpomdpFunctionSet.
\begin{lemma}
  \label{lem:mdpomdp}
  Let be given a %\mdl{pourquoi des majuscules ?}
  \MdpomdpFunctionSet\
  $\generalFunctionSet$ of self-mappings on the set $\overline{\PRIMAL}=\PRIMAL\cup{\na{\cemetery}}$. Moreover, assume
  that, for all $\generalFunction \in \generalFunctionSet$,
  $\generalFunction \np{\cemetery}=\cemetery$. For any subsets $\Primal$ and $\Primal'$ of
  the set $\overline{\PRIMAL}$, we define
  $\restrictionFunctionSubsetSet{\generalFunctionSet}{\Primal}{\Primal'}$ as
  \begin{equation}
    \restrictionFunctionSubsetSet{\generalFunctionSet}{\Primal}{\Primal'} =
    \bset{\generalFunction \in \generalFunctionSet}{
      \generalFunction^{-1}\np{\PRIMAL} = \Primal, \generalFunction\np{\Primal} \subset
      \Primal'}
    \eqfinp
    \label{eq:def_restrictionFunctionSubsetSet}
  \end{equation}
  Then, we have that 
  \begin{equation}
    \label{eq:bound-Tau-X}
    \bcardinal{\restrictionFunctionSubsetSet{\generalFunctionSet}{\Primal}{\Primal'}}
    \begin{cases}
      \le \cardinal{\Primal'}
      & \text{if}\; \Primal \subset \PRIMAL\eqsepv
      \\
      = 0
      & \text{if}\; \Primal \cap \na{\cemetery} \not=\emptyset
      \eqfinp
    \end{cases}
  \end{equation}
\end{lemma}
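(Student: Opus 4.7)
The plan is to treat the two cases of Equation~\eqref{eq:bound-Tau-X} separately.

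For the second case, assume $\cemetery \in \Primal$. Any $\generalFunction \in \restrictionFunctionSubsetSet{\generalFunctionSet}{\Primal}{\Primal'}$ satisfies $\generalFunction^{-1}(\PRIMAL) = \Primal \ni \cemetery$, so $\generalFunction(\cemetery) \in \PRIMAL$. But by assumption $\generalFunction(\cemetery) = \cemetery \notin \PRIMAL$, a contradiction. Hence $\restrictionFunctionSubsetSet{\generalFunctionSet}{\Primal}{\Primal'}$ is empty.

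For the first case, with $\Primal \subset \PRIMAL$, the strategy is to reduce to Lemma~\ref{lem:cardinality_mdpomdp_pushforward_family_no_delta}. I introduce the restriction map $\rho: \restrictionFunctionSubsetSet{\generalFunctionSet}{\Primal}{\Primal'} \to \Mappings{\Primal}{\Primal'}$ sending $\generalFunction$ to its restriction $\generalFunction|_{\Primal}$; this is well-defined into $\Mappings{\Primal}{\Primal'}$ because the defining condition of $\restrictionFunctionSubsetSet{\generalFunctionSet}{\Primal}{\Primal'}$ includes $\generalFunction(\Primal) \subset \Primal'$. First, I check that $\rho$ is injective: given $\generalFunction_1$ and $\generalFunction_2$ in $\restrictionFunctionSubsetSet{\generalFunctionSet}{\Primal}{\Primal'}$ with $\generalFunction_1|_{\Primal} = \generalFunction_2|_{\Primal}$, both mappings necessarily send every point of $\overline{\PRIMAL} \setminus \Primal$ to $\cemetery$ (since $\generalFunction_i^{-1}(\PRIMAL) = \Primal$), so they agree everywhere. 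Second, I check that the image $\rho\bp{\restrictionFunctionSubsetSet{\generalFunctionSet}{\Primal}{\Primal'}}$ satisfies the hypothesis of Lemma~\ref{lem:cardinality_mdpomdp_pushforward_family_no_delta}: if restrictions $\generalFunction_1|_{\Primal}$ and $\generalFunction_2|_{\Primal}$ agree at some $y \in \Primal$, then, since the pair $(\generalFunction_1, \generalFunction_2)$ is \cemeterySeparated\ (Definition~\ref{def:cemeterySeparation_mapping_pair}) and $\generalFunction_1^{-1}(\PRIMAL) \cap \generalFunction_2^{-1}(\PRIMAL) = \Primal$, agreement at $y$ forces agreement on all of $\Primal$.

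Combining these two facts, Lemma~\ref{lem:cardinality_mdpomdp_pushforward_family_no_delta} applied to the image of $\rho$ yields $\bcardinal{\rho\bp{\restrictionFunctionSubsetSet{\generalFunctionSet}{\Primal}{\Primal'}}} \leq \cardinal{\Primal'}$, which together with injectivity of $\rho$ gives the desired inequality $\bcardinal{\restrictionFunctionSubsetSet{\generalFunctionSet}{\Primal}{\Primal'}} \leq \cardinal{\Primal'}$. I do not anticipate any technical obstacle: the whole argument hinges on the observation that, within $\restrictionFunctionSubsetSet{\generalFunctionSet}{\Primal}{\Primal'}$, all mappings share the same preimage $\Primal$ of $\PRIMAL$, so the $\cemetery$-separation condition collapses to ordinary separation of restrictions to $\Primal$, which is exactly the hypothesis required by Lemma~\ref{lem:cardinality_mdpomdp_pushforward_family_no_delta}.
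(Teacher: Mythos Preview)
Your proof is correct and follows essentially the same route as the paper's: the paper also splits into the two cases, and in the case $\Primal \subset \PRIMAL$ it defines the same restriction map (called $\Gamma$ there), establishes its injectivity via the same argument, shows its image is a separated set of mappings from $\Primal$ to $\Primal'$ using the \cemeterySeparation\ of $\generalFunctionSet$, and then invokes Lemma~\ref{lem:cardinality_mdpomdp_pushforward_family_no_delta}. The structure and key ideas are identical.
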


\begin{proof} Fix $\Primal \subset \overline{\PRIMAL}$ and
  $\Primal' \subset \overline{\PRIMAL}$. First, we consider the case where
  $\Primal \cap \na{\cemetery} \not=\emptyset$. As we have assumed that
  $\generalFunction\np{\cemetery}=\cemetery$, for all
  $\generalFunction\in \generalFunctionSet$, we obtain
  % for all $\generalFunction\in \generalFunctionfamily$, we obtain
  that $\generalFunction\in \generalFunctionSet \implies$
  $ \generalFunction^{-1}\np{\PRIMAL}\cap \na{\cemetery}=\emptyset$. Thus, we conclude
  that
  $\cardinal{\restrictionFunctionSubsetSet{\generalFunctionSet}{\Primal}{\Primal'}} =
  \cardinal{\emptyset}=0$. Second, we consider the case where $\Primal \subset \PRIMAL$
  and consider the mapping
  \begin{equation}
    \Gamma: \restrictionFunctionSubsetSet{\generalFunctionSet}{\Primal}{\Primal'} \to
    {\Primal'}^\Primal \eqsepv
    \generalFunction \mapsto  \generalFunction_{\vert_{\Primal}}
    \eqfinp
    \label{eq:def_mapping_gamma}
  \end{equation}

  The mapping $\Gamma$ is injective. Indeed, if two mappings in
  $\restrictionFunctionSubsetSet{\generalFunctionSet}{\Primal}{\Primal'}$ have the same
  restriction on $\Primal$, they coincide on $\overline{\PRIMAL}$ as they are both constant
  on the set $\overline{\PRIMAL}\setminus \Primal$ with value $\cemetery$. We therefore
  obtain that
  \begin{equation}
    \bcardinal{\restrictionFunctionSubsetSet{\generalFunctionSet}{\Primal}{\Primal'}} =
    \bcardinal{\Gamma\np{
        \restrictionFunctionSubsetSet{\generalFunctionSet}{\Primal}{\Primal'}}}
    \eqfinp
    \label{eq:Gamma-inj}
  \end{equation}
  Now, the set
  $\generalFunctionSet'=\Gamma
  \np{\restrictionFunctionSubsetSet{\generalFunctionSet}{\Primal}{\Primal'}}$
  is a subset of mappings from $\Primal$ to $\Primal'$.
  % \rouge{Dans ce qui suit, ce qui \'{e}tait juste est devenu faux apr\`{e}s tes modifs, j'ai
    % re-corrig\'{e}}
  As $\generalFunctionSet$ is a \MdpomdpFunctionSet, we obtain that $\generalFunctionSet'$
  is a separated set of mappings from $\Primal$ to $\Primal'$. Indeed, consider an ordered pair of
  mappings $\np{\generalFunction'_1, \generalFunction'_2} \in \generalFunctionSet'^2$ and
  assume that there exists $\primal \in \Primal$ such that
  $\generalFunction'_1(\primal) = \generalFunction'_2(\primal)$. Using the definition of
  $\generalFunctionSet'$, we have that $\generalFunction'_1(\primal)$ and
  $\generalFunction'_2(\primal)$ are both non equal to $\cemetery$. Moreover, there exists
  $\generalFunction_1$ and $\generalFunction_2$ in
  $\restrictionFunctionSubsetSet{\generalFunctionSet}{\Primal}{\Primal'}$ such that
  $\generalFunction'_1 = \Gamma\np{\generalFunction_1}$ and
  $\generalFunction'_2 = \Gamma\np{\generalFunction_2}$. Using again the definition of
  $\generalFunctionSet' = \Gamma
  \np{\restrictionFunctionSubsetSet{\generalFunctionSet}{\Primal}{\Primal'}}$ we obtain
  that $\generalFunction_1\np{x}=\generalFunction_2\np{x}\not=\cemetery$. Now, as
  $\generalFunctionSet$ is a \MdpomdpFunctionSet, we obtain that the two mappings
  $\generalFunction_1$ and $\generalFunction_2$ coincide on $\Primal$ since they both do
  not take the value $\cemetery$ on $\Primal$. We conclude that their restrictions on
  $\Primal$, the mappings $\generalFunction'_1$ and $\generalFunction'_2$, coincide. Using
  Lemma~\ref{lem:cardinality_mdpomdp_pushforward_family_no_delta} in~\S\ref{sect:complements-det-pomdp}, we obtain that
  \begin{equation*}
    \bcardinal{\Gamma\np{\restrictionFunctionSubsetSet{\generalFunctionSet}{\Primal}{
          \Primal'}}}
    \le \cardinal{\Primal'}
    \eqfinv
  \end{equation*}
  which, combined with Equation~\eqref{eq:Gamma-inj}, gives
  Equation~\eqref{eq:bound-Tau-\Primal}. This concludes the proof.
\end{proof}

We now present the postponed proof of
Proposition~\ref{prop:dynamics_of_dpomdp_separated_ensure_mdpomdp}, presented in
page~\pageref{prop:dynamics_of_dpomdp_separated_ensure_mdpomdp}.

\begin{proof}[Proof of
Proposition~\ref{prop:dynamics_of_dpomdp_separated_ensure_mdpomdp}]
  The proof of Proposition~\ref{prop:dynamics_of_dpomdp_separated_ensure_mdpomdp} is
  a direct consequence of
  Corollary~\ref{cor:separated_familly_and_sets_leads_to_union_separated}.

  We assume that the set
  $\bigcup_{\timeindex \in \timeset} \dynamics^{\UU^{\timeindex+1}}_{0:\timeindex} =
  \nset{\dynamics_{0:\timeindex}^{\controls_{0:\timeindex}}}{\forall \timeindex \in
    \timesetNoHorizon, \forall \controls_{0:\timeindex} \in \UU^{\timeindex+1}}$ of the
  composition of the evolution mappings of Problem~\eqref{eq:dpomdp_general_formulation}
  is a \separatedMappingSet. We then prove that
  Problem~\eqref{eq:dpomdp_general_formulation} is a \Mdpomdp.

  First, for all time $\timeindex$ and for all ordered pairs
  $\np{\controls, \observer} \in \UU \times \OO$, we have
  $\pushforwardtransition^{\controls, \observer}_{\timeindex} =
  \Mapforward{\dynamics_{\timeindex}^{\controls}}{
    \np{\observerfunct_{\timeindex+1}^{\controls}}^{-1}\np{\observer}}$
  % (see Equation~\eqref{eq:link_pushForward_forwardMapping})
  (see Equation~\eqref{eq:link_pushForward_forwardMapping}).
  Thus, by
  Equation~\eqref{eq:forward-to-backward}, there exists $X \subset \XX$ such that
  $\pushforwardtransition^{\controls, \observer}_{\timeindex} =
  \Mapbackward{\dynamics_{\timeindex}^{\controls}}{X}$. Hence,
  $\setsPushForward_{\timeindex}$ is of the same form as in
  Equation~\eqref{eq:GGk_definition}, with the role of set
  $\generalFunctionBisSet_{\kindex}$ taken by $\na{\dynamics_{\timeindex}^{\UU}}$.

  We hence have that
  $\setsPushForward = \bigcup_{\timeindex \in \timeset} \setsPushForward_{0:\timeindex}$
  is a \MdpomdpFunctionSet\ by
  Corollary~\ref{cor:separated_familly_and_sets_leads_to_union_separated} --- where the role
  of $\nseqa{\generalFunctionSet_{\kindex}}{\kindex \in \NN}$ is taken by
  $\nseqa{\setsPushForward_{\timeindex}}{\timeindex \in \timesetNoHorizon}$, and the role
  of $\nseqa{\generalFunctionBisSet_{\kindex}}{\kindex \in \NN}$ is taken by
  $\nseqa{\dynamics_{\timeindex}^{\UU}}{\timeindex \in \timesetNoHorizon}$.

  Therefore, as $\setsPushForward$ is a
  \MdpomdpFunctionSet, Problem~\eqref{eq:dpomdp_general_formulation} is a \Mdpomdp.
\end{proof}

We now present the postponed proof of
Theorem~\ref{th:bound_belief_space_mdpomdp}, presented 
page~\pageref{th:bound_belief_space_mdpomdp}.

\begin{proof}[Proof of Theorem~\ref{th:bound_belief_space_mdpomdp}]
  % \mdl{qui est $X$ ?}
  {Let $X \subset \XX$.}
  We start by giving preliminary bounds on
  $\BBcardinal{\bp{\Renormalization\circ \np{ \restrictionsetsPushForward{X}{\XX}}_{\star}}
    \np{\belief_0} \setminus \na{\cemeteryBelief}}$, where
  $\restrictionsetsPushForward{X}{\XX}$ is defined by
  Equation~\eqref{eq:def_restrictionFunctionSubsetSet},
  % (in Appendix~\ref{subsect:properties_of_mdpomdp_function_sets}),
  i.e.
  \[
    \restrictionFunctionSubsetSet{\setsPushForward}{X}{\XX} =
    \bset{\pushforwardtransition \in \setsPushForward}{
      \pushforwardtransition^{-1}\np{\XX} = X,
      \pushforwardtransition\np{\Primal} \subset \XX}
    \eqfinv
  \]
  where $\setsPushForward$ is defined in Equation~\eqref{eq:def_setsPushForward}.
  We consider three cases depending on the cardinality of
  the subset~$\Primal$.
  \begin{subequations}
    \label{eq:cardX}
    \begin{enumerate}
    \item When $\cardinal{\Primal}=0$, we have that $\Primal = \emptyset$ and
      $\bp{\Renormalization\circ \np{\restrictionsetsPushForward{\emptyset}{\PRIMAL}}_{\star}}
      \np{\belief_0} \setminus \na{\cemeteryBelief}=\emptyset$, and thus
      \begin{equation}
        \BBcardinal{\bp{ \Renormalization \circ \np{ \restrictionsetsPushForward{X}{\XX}}_{\star}}
          \np{\belief_0} \setminus \na{\cemeteryBelief} }=0
        \eqfinp
        \label{eq:cardXeq0}
      \end{equation}
    \item When $\cardinal{\Primal}=1$, we have that
      $\bp{\Renormalization\circ \np{\restrictionsetsPushForward{X}{\XX}}_{\star}} \np{\belief_0}
      \setminus \na{\cemeteryBelief} \subset \bset{\delta_{x} }{x\in \PRIMAL}$, as the
      only probability distributions of $\Delta\np{\XX}$ with support of cardinality at
      most~$1$ are the Dirac measures $\bset{\delta_{x} }{x\in \PRIMAL}$
      %vertices $\bset{\delta_{x} }{x\in \PRIMAL}$ of the simplex
      %$\Delta\np{\XX}$,
      and thus
      \begin{equation}
        \BBcardinal{\bp{\Renormalization\circ \np{ \restrictionsetsPushForward{X}{\XX}}_{\star}}
          \np{\belief_0}\setminus \na{\cemeteryBelief}}
        \le \bcardinal{\bset{\delta_{x} }{x\in \PRIMAL}} = \cardinal{\PRIMAL}
        \eqfinp
        \label{eq:cardXeq1}
      \end{equation}
    \item For $\cardinal{\Primal}\ge 2$, we have by Lemma \ref{lem:mdpomdp} in
      Appendix~\ref{sect:technical_lemmatas_pushforward}, applied with $\GG = \FF$ (as
      $\FF$ is a \MdpomdpFunctionSet) that
      \begin{equation}
        \BBcardinal{\bp{\Renormalization\circ \np{ \restrictionsetsPushForward{X}{\XX}}_{\star}}
          \np{\belief_0}\setminus \na{\cemeteryBelief}}
        \le \bcardinal{\np{\restrictionsetsPushForward{X}{\XX}}_{\star}}
        \le \cardinal{\PRIMAL}
        \eqfinp
        \label{eq:cardXg2}
      \end{equation}
    \end{enumerate}
  \end{subequations}
  We have by Equation~\eqref{eq:equ_BBR_FF} that
  $\bcardinal{\BBR_{\ic{1,\horizon}}(\belief_0)} = \cardinal{\setsBeliefDynamics
    \np{\belief_0}}$.
  We now detail the cardinality of $\setsBeliefDynamics \np{\belief_0}$:
  \begin{align}
    \bcardinal{ \setsBeliefDynamics \np{\belief_0}\setminus \na{\cemeteryBelief}}
    &= \bcardinal{ \bp{\Renormalization\circ \np{\setsPushForward}_{\star}}\np{\belief_0}\setminus
      \na{\cemeteryBelief}}
      \nonumber
    \\
    &= \BBcardinal{ \bgp{\Renormalization\circ \Bp{ \bigcup_{\Primal \subset \PRIMAL}
      \restrictionsetsPushForward{X}{\XX}}_{\star}} \np{\belief_0}\setminus
      \na{\cemeteryBelief}}
      \tag{as $\bigcup_{\Primal \subset \PRIMAL}
      \restrictionsetsPushForward{X}{\XX} = \setsPushForward$}
      \nonumber
    \\
    &= \BBcardinal{\bigcup_{\Primal \subset \PRIMAL} \bp{\Renormalization\circ
      \np{ \restrictionsetsPushForward{X}{\XX}}_{\star}} \np{\belief_0}\setminus
      \na{\cemeteryBelief}}
      \nonumber
      \intertext{ as $\forall\np{\pushforwardtransition, \pushforwardtransition’} \in
      \bp{\setsPushForward}^2$, $\Renormalization\circ \bp{\pushforwardtransition \cup
      \pushforwardtransition’} = \Renormalization\circ\pushforwardtransition \cup {\cal
      R}\circ\pushforwardtransition’$,}
    \nonumber
    % \\
    &= \BBcardinal{\bigcup_{\Primal \subset \supp\np{\belief_0}} \bp{\Renormalization\circ \np{
      \restrictionsetsPushForward{X}{\XX}}_{\star}} \np{\belief_0}\setminus
      \na{\cemeteryBelief}}
      \nonumber
      \intertext{
      as $\bp{ \Renormalization \circ \np{\setsPushForward_{\Primal \cap
      \supp\np{\belief_0} \to \PRIMAL}}_{\star}} \np{\belief_0} =
      \bp{ \Renormalization \circ \np{
      \restrictionsetsPushForward{X}{\XX}}_{\star}} \np{\belief_0}$
      by Equation~\eqref{eq:Backward-support-b0} in
      Lemma~\ref{lem:ext_pushforward_renorm_result},}
      \nonumber
    \\
    &\le
      \sum_{\Primal \subset \supp\np{\belief_0}} \BBcardinal{\bp{\Renormalization\circ \np{
      \restrictionsetsPushForward{X}{\XX}}_{\star}} \np{\belief_0}\setminus
      \na{\cemeteryBelief}}
      \nonumber
    \\
    &=
      \sum_{k \ge 0}
      \sum_{\substack{\Primal \subset \supp\np{\belief_0}
    \nonumber
    \\
    \cardinal{\Primal}=k}}
    \BBcardinal{\bp{\Renormalization\circ \np{ \restrictionsetsPushForward{X}{\XX}}_{\star}}
    \np{\belief_0}\setminus \na{\cemeteryBelief}}
    \label{eq:ineq_cardinal_BBR_b0_leq_sum_cardinal_restrictionsetsPushForward}
    \\
    &\le
      \cardinal{\PRIMAL}
      +
      \sum_{\substack{\Primal \subset \supp\np{\belief_0} \\ \cardinal{\Primal}\ge 2}}
    \cardinal{\PRIMAL}
    \tag{by Equations~\eqref{eq:cardX}}
    \nonumber
    \\
    &=
      \cardinal{\XX} + \bp{2^{\cardinal{\supp \np{\belief_0}}} -
      \cardinal{\supp \np{\belief_0}} - 1}
      \cardinal{\XX}
      \label{eq:bound_mdpomdp_proof}
      \eqfinv
  \end{align}
  where the last equality comes from the fact that
  $\bcardinal{\nset{\Primal \subset \supp\np{\belief_0}}{\cardinal{\Primal}\ge 2}}$ is
  given by
  \begin{multline*}
    \bcardinal{\nset{\Primal \subset \supp\np{\belief_0}}{\cardinal{\Primal}\ge 2}}
    =\\
    \underbrace{\bcardinal{\bset{\Primal \subset \PRIMAL}{\Primal \subset \supp\np{\belief_0}}}}_{
    2^{\cardinal{\supp \np{\belief_0}}}} -
    \underbrace{\bcardinal{
        \bset{\Primal \subset \supp\np{\belief_0}}{\cardinal{\Primal}=1}}}_{= \cardinal{\supp
      \np{\belief_0}}} -
    \underbrace{\bcardinal{\bset{\Primal \subset
          \supp\np{\belief_0}}{\cardinal{\Primal}=0}}}_{= 1}
    \eqfinp
  \end{multline*}

  We hence obtain that
  \def\myop#1#2{\mathrel{\overset{\mbox{\normalfont\tiny\sffamily #1}}{#2}}}
  \begin{equation*}
    \bcardinal{\BBR_{\ic{1,\horizon}}(\belief_0)}
    \myop{\eqref{eq:equ_BBR_FF}}{=}
    \cardinal{\setsBeliefDynamics \np{\belief_0}}
    \myop{\eqref{eq:bound_mdpomdp_proof}}{\leq}
    1 + \bp{2^{\cardinal{\supp \np{\belief_{0}} } } - \cardinal{\supp \np{\belief_0}}}
    \cardinal{\XX}
    \eqfinp
  \end{equation*}
  % \rouge{Il faut finir avec le lien entre $\setsBeliefDynamics$ et
  % $\bcardinal{\BBR_{1:\horizon}(\belief_0)}$}
  This ends the proof.
\end{proof}

% We now present examples of \Mdpomdp s.

\subsubsection{Example of \Mdpomdp s}
\label{subsect:example_mdpomdp}

%In this subsection, we present examples of \Mdpomdp s. Indeed,
A direct consequence of
Proposition~\ref{prop:dynamics_of_dpomdp_separated_ensure_mdpomdp} is that, if the
evolution mappings of a \dpomdp\ belong to a \separatedMappingSet, then the \dpomdp\ is a
\Mdpomdp. We now present an example of such evolution mappings. In the following, we use
the same notations as those presented in Problem~\eqref{eq:dpomdp_general_formulation}.

% \cyrille{introduire une structure sur $\XX$ vu qu'il y a une addition?}
\begin{corollary}
  \label{cor:affine_dynamics_mdpomdp}
  Consider a \dpomdp~optimization problem given by
  Problem~\eqref{eq:dpomdp_general_formulation} which satisfies the finite sets
  Assumption~\ref{assumpt:pomdp_finite_sets}. Assuming that, for all time
  $\timeindex \in \timeset \setminus \na{\horizon}$, there exists a mappings~$g_{\timeindex}$
  such that, for all states $\states \in \XX \subset \RR^n$,
  \begin{equation}
    \dynamics_{\timeindex} \np{\states, \controls} = \states +
    g_{\timeindex}\np{\controls}
    \eqfinv
    \label{eq:affine_dynamics_def}
  \end{equation}
  then
  Problem~\eqref{eq:dpomdp_general_formulation} is a \Mdpomdp.
\end{corollary}

\begin{proof}
  We start by proving that the set of mappings
  $\cup_{\timeindex \in \timesetNoHorizon} \bp{\dynamics^{\UU^{\timeindex+1}}_{0:\timeindex}}$
  is a \separatedMappingSet. For that purpose, consider $\timeindex_1 \leq \timeindex_1'$ and $\timeindex_2 \leq \timeindex_2'$
  such that
  $\ic{\timeindex_1, \timeindex_1'} \subset \timesetNoHorizon$ and
  $\ic{\timeindex_2, \timeindex_2'} \subset \timesetNoHorizon$ and consider $\controls_{\timeindex_1:\timeindex_1'} \in
  \UU^{\timeindex_1'-\timeindex_1 + 1}$ and $\controls'_{\timeindex_2:\timeindex_2'} \in
  \UU^{\timeindex_2'-\timeindex_2 + 1}$ two
  sequences of controls, in order to obtain two mappings of the set $\cup_{\timeindex \in \timesetNoHorizon} \bp{\dynamics^{\UU^{\timeindex+1}}_{0:\timeindex}}$, $
  \dynamics_{\timeindex_1:\timeindex_1'}^{\controls_{\timeindex_1:\timeindex_1'}}: \XX
  \to \XX, \states \mapsto \states + \sum_{\timeindex \in \ic{\timeindex_1,
      \timeindex_1'}} g_{\timeindex} \np{\controls_{t}}$, and $
  \dynamics_{\timeindex_2:\timeindex_2'}^{\controls'_{\timeindex_2:\timeindex_2'}}:
  \XX \to \XX, \states \mapsto \states + \sum_{\timeindex \in \ic{\timeindex_2,
      \timeindex_2'}} g_{\timeindex} \np{\controls'_{t}}
  $.
  If there exists a state $\states \in \XX$ such that
  $\dynamics_{\timeindex_1:\timeindex_1'}^{\controls_{\timeindex_1:\timeindex_1'}}
  \np{\states} =
  \dynamics_{\timeindex_2:\timeindex_2'}^{\controls'_{\timeindex_2:\timeindex_2'}}
  \np{\states}$, then we have that
  $\sum_{\timeindex \in \ic{\timeindex_1, \timeindex_1'}}
  g_{\timeindex}\np{\controls_{t}} =
  \sum_{\timeindex \in \ic{\timeindex_2, \timeindex_2'}}
  g_{\timeindex} \np{\controls'_{t}}
  $ and thus the two mappings
  $\dynamics_{\timeindex_1:\timeindex_1'}^{\controls_{\timeindex_1:\timeindex_1'}}
  $ and $\dynamics_{\timeindex_2:\timeindex_2'}^{\controls'_{\timeindex_2:\timeindex_2'}}$
  coincide.
  Therefore, the set
  $\cup_{\timeindex \in \timeset} \bp{\dynamics^{\UU^{\timeindex+1}}_{0:\timeindex}}$
  % = \nset{\dynamics_{0:\timeindex}^{\controls_{0:\timeindex}}}{\forall \timeindex \in
  %  \timesetNoHorizon, \forall \controls_{0:\timeindex} \in \UU^{\timeindex+1}}$
  of
  composition of the evolution mappings is a \separatedMappingSet, and we conclude by
  Proposition~\ref{prop:dynamics_of_dpomdp_separated_ensure_mdpomdp} that
  Problem~\eqref{eq:dpomdp_general_formulation} is a \Mdpomdp.
\end{proof}

% \mdl{la biblio est horrible avec des tas de majuscules injustement forcées dans
% des titres}
% \cyrille{J'ai modifié la biblio pour répondre à la critique de Michel sur les titres}


\begin{thebibliography}{0}
\providecommand{\natexlab}[1]{#1}
\providecommand{\url}[1]{\texttt{#1}}
\expandafter\ifx\csname urlstyle\endcsname\relax
  \providecommand{\doi}[1]{doi: #1}\else
  \providecommand{\doi}{doi: \begingroup \urlstyle{rm}\Url}\fi

\end{thebibliography}


\begin{thebibliography}{18}
\providecommand{\natexlab}[1]{#1}
\providecommand{\url}[1]{\texttt{#1}}
\expandafter\ifx\csname urlstyle\endcsname\relax
  \providecommand{\doi}[1]{doi: #1}\else
  \providecommand{\doi}{doi: \begingroup \urlstyle{rm}\Url}\fi

\bibitem[{\r A}str\"{o}m(1965)]{astrom_optimal_1965}
K.~J. {\r A}str\"{o}m.
\newblock Optimal control of {Markov} processes with incomplete state
  information.
\newblock \emph{Journal of Mathematical Analysis and Applications}, 10\penalty0
  (1):\penalty0 174--205, Feb. 1965.
\newblock \doi{10.1016/0022-247X(65)90154-X}.

\bibitem[Bellman(1957)]{bellman_1957}
R.~Bellman.
\newblock \emph{Dynamic programming}.
\newblock Princeton Univ. Pr, Princeton, NJ, 1957.

\bibitem[Bertsekas(2000)]{Bertsekas:2000}
D.~P. Bertsekas.
\newblock \emph{Dynamic Programming and Optimal Control}.
\newblock Athena Scientific, Belmont, Massachusetts, second edition, 2000.
\newblock Volumes 1 and 2.

\bibitem[Bertsekas and Shreve(1978)]{bertsekas_shreve}
D.~P. Bertsekas and S.~E. Shreve.
\newblock \emph{Stochastic optimal control: the discrete time case}.
\newblock Number v. 139 in Mathematics in science and engineering. Academic
  Press, New York, 1978.

\bibitem[Bonet(2009)]{bonet_deterministic_pomdp}
B.~Bonet.
\newblock Deterministic {POMDPs} revisited.
\newblock In \emph{Proceedings of the Twenty-Fifth Conference on Uncertainty in
  Artificial Intelligence}, UAI '09, page 59–66, Arlington, Virginia, USA,
  2009. AUAI Press.

\bibitem[Burnetas and Katehakis(1997)]{Burnetas-Katehakis-1997}
A.~N. Burnetas and M.~N. Katehakis.
\newblock Optimal adaptive policies for markov decision processes.
\newblock \emph{Mathematics of Operations Research}, 22\penalty0 (1):\penalty0
  222--255, 1997.
\newblock ISSN 0364765X, 15265471.
\newblock URL \url{http://www.jstor.org/stable/3690147}.

\bibitem[Geffner and Bonet(1998)]{bonet_pomdp_1998}
H.~Geffner and B.~Bonet.
\newblock Solving {Large} {POMDPs} using {Real} {Time} {Dynamic} {Programming}.
\newblock In \emph{Proc. Fall AAAI Symposium on POMDPS}, Orlando, FL, 1998.

\bibitem[Kurniawati et~al.(2008)Kurniawati, Hsu, and
  Sun~Lee]{sarsop_kurniawati_2008}
H.~Kurniawati, D.~Hsu, and W.~Sun~Lee.
\newblock {SARSOP}: Efficient point-based {POMDP} planning by approximating
  optimally reachable belief spaces.
\newblock In \emph{Robotics: {Science} and {Systems} {IV}}. Robotics: Science
  and Systems Foundation, June 2008.
\newblock \doi{10.15607/RSS.2008.IV.009}.

\bibitem[Littman(1996)]{littman_thesis}
M.~L. Littman.
\newblock \emph{Algorithms for {Sequential} {Decision} {Making}}.
\newblock PhD thesis, Brown University, 1996.

\bibitem[Pajarinen and Kyrki(2017)]{pajarinen_robotic_2017}
J.~Pajarinen and V.~Kyrki.
\newblock Robotic manipulation of multiple objects as a {POMDP}.
\newblock \emph{Artificial Intelligence}, 247:\penalty0 213--228, June 2017.
\newblock \doi{10.1016/j.artint.2015.04.001}.

\bibitem[Puterman(1994)]{puterman_1994}
M.~L. Puterman.
\newblock \emph{Markov {Decision} {Processes}: {Discrete} {Stochastic}
  {Dynamic} {Programming}}.
\newblock Wiley {Series} in {Probability} and {Statistics}. Wiley, 1 edition,
  Apr. 1994.
\newblock \doi{10.1002/9780470316887}.

\bibitem[Smallwood and Sondik(1973)]{sondik_pomdp}
R.~D. Smallwood and E.~J. Sondik.
\newblock The optimal control of partially observable {Markov} processes over a
  finite horizon.
\newblock \emph{Operations Research}, 21\penalty0 (5):\penalty0 1071--1088,
  Oct. 1973.
\newblock \doi{10.1287/opre.21.5.1071}.

\bibitem[Steimle et~al.(2021)Steimle, Kaufman, and
  Denton]{steimle_multi-model_2021}
L.~N. Steimle, D.~L. Kaufman, and B.~T. Denton.
\newblock Multi-model {Markov} decision processes.
\newblock \emph{IISE Transactions}, 53\penalty0 (10):\penalty0 1124--1139,
  2021.
\newblock \doi{10.1080/24725854.2021.1895454}.

\bibitem[Sutton and Barto(2018)]{Sutton1998}
R.~S. Sutton and A.~G. Barto.
\newblock \emph{Reinforcement Learning: An Introduction}.
\newblock The MIT Press, second edition, 2018.

\bibitem[Vessaire(2022)]{vessaire_thesis}
C.~Vessaire.
\newblock \emph{Design and operation management of oil-fields taking into
  account partially observed uncertainties}.
\newblock PhD thesis, École des Ponts ParisTech, 2022.

\bibitem[Wesley et~al.(2019)Wesley, Katehakis, and
  Pirutinsky]{Cowan-et-all-2019}
C.~Wesley, M.~N. Katehakis, and D.~Pirutinsky.
\newblock Accelerating the computation of ucb and related indices for
  reinforcement learning.
\newblock \emph{arXiv:1909.13158}, 2019.

\bibitem[Wesley et~al.(2020)Wesley, Katehakis, and
  Pirutinsky]{Cowan-et-all-2018}
C.~Wesley, M.~N. Katehakis, and D.~Pirutinsky.
\newblock \emph{Reinforcement learning: a comparison of UCB versus alternative
  adaptive policies}, pages 127--138.
\newblock First Congress of Greek Mathematicians, De Gruyter, Berlin, Boston,
  2020.
\newblock ISBN 9783110663075.
\newblock \doi{doi:10.1515/9783110663075-006}.

\bibitem[White(1993)]{white_survey_1993}
D.~J. White.
\newblock A survey of applications of {Markov} decision processes.
\newblock \emph{The Journal of the Operational Research Society}, 44\penalty0
  (11):\penalty0 1073, Nov. 1993.
\newblock \doi{10.2307/2583870}.

\end{thebibliography}
\end{document}